  \def\LabelFigloaded{\relax}
  \chardef\LabelFigCatAt\the\catcode`\@
 \let\LabelFigwlog@ld\wlog
 \def\wlog#1{\relax}
 \def\N@wif{\csname newif\endcsname }
 \def\Temp@ {\N@wif\ifIN@}
\let\Temp@\relax
  \def\IN@{\expandafter\INN@\expandafter}
  \long\def\INN@0#1@#2@{\long\def\NI@##1#1##2##3\ENDNI@
    {\ifx\m@rker##2\IN@false\else\IN@true\fi}%
     \expandafter\NI@#2@@#1\m@rker\ENDNI@}
  \def\m@rker{\m@@rker}
 \def\Shifted@@#1#2#3{\setbox0=\hbox{#3}%
   \raise -\dp0\vbox {\kern-#2%
       \hbox {\kern#1\box0\kern-#1}%
           \kern#2}}
 \newbox\auxGridbox@ \newbox\hGridbox@ \newbox\vGridbox@
 \newbox\Labelbox@ \newbox\auxLabelbox@
 \newbox\Coordinatebox@
 \newtoks\Labeltoks@
 \newdimen\Wdd@ \newdimen\Htt@
 \def\hRule@{\advance\gridcount -2%
   \vskip-.2pt\hrule\vskip-.2pt\vfil
   \llap{\smash{\raise -2.5pt
     \hbox{.\number\gridcount\kern2pt}}}%
           \vskip-.2pt\hrule\vskip-.2pt\vfil}
 \def\vRule@{\advance\gridcount 2%
   \hskip-.2pt\vrule\hskip-.2pt\hfil
   \setbox\auxGridbox@=\vbox to 0pt
      {\vskip \Htt@\vskip 2pt
        \hbox{\kern-3.5pt.\number\gridcount}\vss}%
      \wd\auxGridbox@=0pt \box\auxGridbox@
   \hskip-.2pt\vrule\hskip-.2pt\hfil}
 \def\PlaceGrid@@{\gridcount=10%
  \setbox\hGridbox@=%
    \hbox{\hbox{\GridSpider@{\hskip-.4pt\vrule}%
             \vbox to \Htt@{\offinterlineskip\parindent=\z@%
                \GridSpider@{\vskip-.4pt\hrule}\vfil
                \hRule@\hRule@\hRule@\hRule@
                  \vskip-.2pt\hrule\vskip-.2pt\vfil
                \hbox to \Wdd@{\hfil}%
             \GridSpider@{\hrule\vskip-.4pt}}%
         \GridSpider@{\vrule\hskip-.4pt}}}%
  \gridcount=0%
  \setbox\vGridbox@=
     \hbox{\vbox{\offinterlineskip\parindent=0pt\hsize=0pt
       \GridSpider@{\vskip-.4pt\hrule}%
             \hbox to \Wdd@{%
                \GridSpider@{\hskip-.4pt\vrule}\hfil
                \vtop to \Htt@{\vfil}%
                     \vRule@\vRule@\vRule@\vRule@
                     \hskip-.2pt\vrule\hskip-.2pt\hfil
             \GridSpider@{\vrule\hskip-.4pt}}%
         \GridSpider@{\hrule\vskip-.4pt}}}%
  \wd\hGridbox@=0pt\ht\hGridbox@=0pt
  \wd\vGridbox@=0pt\ht\vGridbox@=0pt
 \hbox{\box\hGridbox@\box\vGridbox@}%
  }
 \def\SetLabels#1\endSetLabels{%
   \Labeltoks@={#1}}
 \def\GridSpider@#1{#1}
 \let\PlaceGrid@\relax
 \def\ShowGrid{\let\PlaceGrid@\PlaceGrid@@}
 \def\bAdjust@@{%
     \setbox\auxLabelbox@=\hbox{\raise \dp\auxLabelbox@
            \box\auxLabelbox@}}
 \def\bAdjust@{\let\vAdjust@\bAdjust@@}
 \def\tAdjust@@{%
     \setbox\auxLabelbox@=\hbox{\raise-\ht\auxLabelbox@
            \box\auxLabelbox@}}
 \def\tAdjust@{\let\vAdjust@\tAdjust@@}
 \let\vAdjust@\relax
 \def\lAdjust@{\let\hAdjust@\rlap}
 \def\rAdjust@{\let\hAdjust@\llap}
 \let\hAdjust@\relax\let\vAdjust@\relax
 \def\FetchLabel@#1(#2*#3)#4\\#5\endFetchLabel@{%
     \ignorespaces#1\unskip
     \Labeltoks@={#5}%
     \setbox\auxLabelbox@=%
        \hbox to 0pt{\hss\ignorespaces\hAdjust@
          {\ignorespaces#4\unskip}\hss}%
     \vAdjust@
     \let\hAdjust@\relax\let\vAdjust@\relax
     \setbox\Labelbox@=\hbox to 0pt{%
       \box\Labelbox@
       \Shifted@@{#2\Wdd@}{#3\Htt@}{\box\auxLabelbox@}}%
     \ht\Labelbox@=0pt\dp\Labelbox@=0pt
     }
 \def\PlaceLabels@@{\bgroup\def\Cr@{\\}%
     \let\L\lAdjust@\let\R\rAdjust@
     \let\B\bAdjust@\let\T\tAdjust@
     \loop
     \IN@0\Cr@ @\the\Labeltoks@ @\relax
     \ifIN@ \expandafter
       \FetchLabel@\the\Labeltoks@\endFetchLabel@
     \repeat
     \box\Labelbox@\egroup}%
 \let \PlaceLabels@\PlaceLabels@@
 \def\AffixLabels#1{\setbox\Coordinatebox@=\hbox{#1}%
      \Wdd@=\wd\Coordinatebox@ \Htt@=\ht\Coordinatebox@
      \advance\Htt@ \dp \Coordinatebox@
      \hbox{\copy\Coordinatebox@\kern-\Wdd@%
           \Shifted@@{0pt}{-\dp\Coordinatebox@}%
            {\PlaceGrid@\PlaceLabels@}%
           \kern\Wdd@}}
   \let\wlog\LabelFigwlog@ld 
\definecolor{linkred}{rgb}{0.48,0.1,0.05}
\definecolor{linkblue}{RGB}{16, 78, 139}
\definecolor{leafgreen}{rgb}{0.21,0.66,0.24}
\newcommand{\CG}[1]{{\color{leafgreen}#1}}%
	\titlespacing{\section}{0pt}{12pt}{0pt}
	\titlespacing{\subsection}{0pt}{6pt}{0pt}
\long\def\@footnotetext#1{%
\H@@footnotetext{%
\ifHy@nesting 
\hyper@@anchor{\@currentHref}{#1}%
\else 
\Hy@raisedlink{\hyper@@anchor{\@currentHref}{\relax}}#1%
\fi 
}}
\def\@footnotemark{%
\leavevmode 
\ifhmode\edef\@x@sf{\the\spacefactor}\nobreak\fi 
\H@refstepcounter{Hfootnote}%
\hyper@makecurrent{Hfootnote}%
\hyper@linkstart{link}{\@currentHref}%
\@makefnmark 
\hyper@linkend 
\ifhmode\spacefactor\@x@sf\fi 
\relax 
}%
\renewcommand*\@footnotemark{%
\leavevmode 
\ifhmode 
\edef\@x@sf{\the\spacefactor}%
\FN@mf@check 
\nobreak 
\fi 
\H@refstepcounter{Hfootnote}%
\hyper@makecurrent{Hfootnote}%
\hyper@linkstart{link}{\@currentHref}%
\@makefnmark 
\hyper@linkend 
\ifFN@pp@towrite 
\FN@pp@writetemp 
\FN@pp@towritefalse 
\fi 
\FN@mf@prepare 
\ifhmode\spacefactor\@x@sf\fi 
\relax%
}%
\theoremstyle{plain}
\newtheorem{theorem}{Theorem}[section]
\newtheorem{lemma}[theorem]{Lemma}
\newtheorem{corollary}[theorem]{Corollary}
\theoremstyle{definition}
\newtheorem{remark}[theorem]{Remark}
\newcommand{\eps}{{\varepsilon}}%
\newcommand{\sys}{{\rm sys}}%
\newcommand{\R}{{\mathbb R}}
\newcommand{\Hyp}{{\mathbb H}}%
\newcommand{\N}{{\mathbb N}}
\newcommand{\CC}{{\mathcal C}}%
\newcommand{\C}{{\mathscr C}}%
\newcommand{\V}{{\mathscr V}}%
\newcommand{\M}{{\mathcal M}}%
\newcommand{\area}{{\rm area}}%
\newcommand{\arcsinh}{{\,\rm arcsinh}}%
\newcommand{\arccot}{{\,\rm arccot}}%
\newcommand{\arccosh}{{\,\rm arccosh}}%
\newcommand{\dist}{\mathrm{dist}}%
\newcommand{\BS}{{\mathcal{BS}}}%
\newcommand{\overfrown}[1]{\rule{-3pt}{0pt}\stackrel{ \frown}{\rule{0pt}{6pt}\smash #1\rule{1pt}{0pt}}\rule{-4pt}{0pt}}
\long\def\symbolfootnote[#1]#2{\begingroup%
\def\thefootnote{\fnsymbol{footnote}}\footnote[#1]{#2}\endgroup}
\def\blfootnote{\xdef\@thefnmark{}\@footnotetext}
\begin{document}

{\Large \bfseries \sc Quantifying the sparseness of simple geodesics}

{\Large \bfseries \sc on hyperbolic surfaces}

{\bfseries Peter Buser, Hugo Parlier\symbolfootnote[1]{\normalsize Research partially supported by ANR/FNR project SoS, INTER/ANR/16/11554412/SoS, ANR-17-CE40-0033, by Swiss National Science Foundation grant number PP00P2\textunderscore 153024 and by U.S. National Science Foundation grants DMS 1107452, 1107263, 1107367 RNMS: Geometric structures And Representation varieties (the GEAR Network).\\
{\em 2010 Mathematics Subject Classification:} Primary: 32G15, 30F10. Secondary: 30F60, 53C22, 58J50. \\
{\em Key words and phrases:} hyperbolic surfaces, simple closed geodesics
}}

{\em Abstract.} 
The goal of the article is to provide different explicit quantifications of the non density of simple closed geodesics on hyperbolic surfaces. In particular, we show that within any embedded metric disk on a surface, lies a disk of radius only depending on the topology of the surface (and the size of the first embedded disk), which is disjoint from any simple closed geodesic. 
\vspace{1cm}

\section{Introduction}\label{sec:introd}

The set of simple closed geodesics on finitely generated hyperbolic surfaces has many remarkable properties and is related to various aspects of geometric and dynamical properties of moduli spaces and mapping class groups. Among these properties is a result of Birman and Series which states that the larger set of simple complete geodesics is nowhere dense and has Haussdorf dimension 1, the same result holding true for geodesics with uniformly bounded self-intersection number \cite{Birman-Series}. This result, which might seem surprising at first in light of other phenomena, answered a question raised by Jorgensen \cite{Jorgensen} who first exhibited surfaces with non-dense sets of simple closed geodesics. 

It is easy to see that this is really a feature of negative curvature: for instance simple complete geodesics on a flat torus leaving from a given point cover the entire surface and the closed ones are dense (even though they form a measure $0$ subset of the surface). Of course on a flat torus all closed geodesics are simple and similarly, on a hyperbolic surface, the set of {\it all} closed geodesics is not only dense but also dense in the unit tangent bundle. This is one of many instances of how simple geodesics are rare within the set of all closed geodesics. 

Another prime example is for the growth of the number of curves on hyperbolic surfaces. By Huber's asymptotic law \cite{Huber}, the number of closed geodesics of length less than $L$ grows asymptotically like $\sfrac{e^L}{L}$. In contrast, Mirzakhani showed that simple geodesics have very different asymptotic growth as they grow polynomially in $L$ with leading term on the order of $L^{6g-6+n}$ where $g$ is the genus and $n$ the number of cusps \cite{Mirzakhani}. Her results show much more than just asymptotic growth and relate the growth function to the underlying moduli space in many ways. In particular the exact asymptotic behavior doesn't only depend on the topology but also on the underlying geometry of the surface. That the number of simple curves is bounded above by a polynomial function of length was already was one of the key arguments in the results of Birman and Series, and the correct rough asymptotic growth was first proved by Rivin \cite{Rivin1}. More generally, a polynomial upper bound holds for curves with bounded intersection number, and there has been a flurry of results showing asymptoptic growth for such curves in more general contexts \cite{Chas,Erlandsson-Souto, Mirzakhani2, Rivin2, Sapir}.

In this article we quantify the non-density of simple complete geodesics. The Birman-Series result tells us that given any hyperbolic surfaces, in any neighborhood of a point, there is a small disk entirely untouched by any simple complete geodesic. The type of questions we aim to answer are: How large can you take that disk to be? What is the size of the largest disk disjoint from all simple complete geodesics? 

The set of points of $S$ that lie on a simple complete geodesic will be denoted $\BS(S)$ or simply $\BS$ and we shall sometimes refer to this set as the Birman-Series set. Given a surface $S$, we can look at the radius of the largest disk on the complement of $\BS$. Given a moduli space $\M$, the size of this ``maximal gap" is a function over $\M$ and in fact is continuous. Using this and a computation of what happens towards the boundary of moduli spaces, one can show that there is a positive lower bound to $G_S$ which only depends on the topology of the underlying surface (some of the details can be found in \cite{BuserParlier1}). In particular, this shows the existence of a constant $K_{g}>0$ such that any closed hyperbolic surface of genus $g$ has a gap of size $K_{g}$ (or similarly the existence of a constant $K_{g,n}>0$ such that any genus $g$ surface with $n$ cusps has a gap of size $K_{g,n}$). One might hope to find a universal lower bound on the size of gaps but in fact this is impossible. Take any $\varepsilon$-dense but finite set of closed geodesics on a closed surface. Then a theorem of Scott \cite{Scott} asserts that there is a finite cover where all lifts of the closed geodesics are simple. In the cover, the simple geodesics reproduce the $\varepsilon$-density. 

So as there are no universal positive lower bounds on $K_g$, one of our underlying goals is to quantify the constant $K_g$ in terms of $g$. Our first approach to this leads to a precise computation for surfaces in the thin part of moduli space following the natural thick-thin decomposition of moduli space with the {\it systole} function. The {\it systole} $\sys(S)$ of a finite type hyperbolic surface $S$ is the length of a  non-trivial curve of minimal length (by  non-trivial we mean  non-homotopically trivial and  non-peripheral to boundary). Surfaces with systole at least $\varepsilon>0$ are said to lie in the $\varepsilon$-thick part of moduli space. A first step to proving the above theorem is to show that (closed) surfaces with systole below a certain threshold have a gap of that same size.

\begin{theorem}\label{thm:thinsurfaces-i} Let $a_g = \frac{1}{4\cdot (4 \pi (g-1))^2}$. If $\sys(S) \leq a_g$, then $C_S\geq a_g$.
\end{theorem}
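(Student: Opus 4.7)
Let $\gamma$ be a systole of $S$ of length $\ell := \sys(S) \leq a_g$. By the Collar Lemma, $\gamma$ admits an embedded collar $C$ of half-width $w(\ell) = \arcsinh(1/\sinh(\ell/2))$, which is much larger than $a_g$ because $\ell$ is tiny. I fix Fermi coordinates $(\rho, t) \in [-w,w]\times \R/\ell\mathbb{Z}$ on $C$ with hyperbolic metric $d\rho^{2}+\cosh^{2}\rho\,dt^{2}$ and $\gamma = \{\rho = 0\}$. The goal is to locate an embedded disk of radius $a_g$ inside $C$ that avoids $\BS$.

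The first step is a structural description of $\BS\cap C$. Any simple closed geodesic distinct from and disjoint from $\gamma$ cannot enter $C$ by the Collar Lemma, so every element of $\BS$ meeting the interior of $C$ is either $\gamma$ itself or a simple geodesic crossing $\gamma$ transversely. In the annular cover of $S$ associated to $\langle\gamma\rangle$ any two geodesics meet in at most one point, so each maximal arc of such a crossing geodesic inside $C$ meets $\gamma$ exactly once and runs from $\{\rho=-w\}$ to $\{\rho=w\}$. Thus $\BS\cap C = \gamma\,\cup\,\bigcup_{\alpha}\alpha$, where each $\alpha$ is a simple geodesic arc crossing $\gamma$.

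Next, place the candidate center $q = (r_0, t_0)$ with $r_0$ chosen so that $\cosh r_0$ is close to the maximal value $\cosh w(\ell) \approx 2/\ell$. The shortest non-contractible loop through $q$ then wraps once around $\gamma$ with length $\ell\cosh r_0 \approx 2$, so the injectivity radius at $q$ is at least $a_g$; the estimate $r_0+a_g<w(\ell)$ is straightforward, so $B_q(a_g)\subset C$, and $r_0>a_g$ forces $B_q(a_g)\cap\gamma=\emptyset$. For each arc $\alpha$ crossing $\gamma$ at angle $\theta_0$, the geodesic first integral $\cosh^{2}\rho\cdot t' = \cos\theta_0$ in the Fermi metric yields an explicit formula for the $t$-coordinate at which $\alpha$ meets the parallel curve $\{\rho = r_0\}$. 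Consequently the set of ``bad'' $t_0\in \R/\ell\mathbb{Z}$, those for which $\alpha\cap B_q(a_g)\ne\emptyset$, is a sub-arc of width on the order of $2a_g/\cosh r_0 \sim a_g\ell$.

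\textbf{Main obstacle.} A valid $t_0$ exists provided the union, over all arcs $\alpha$, of the bad sub-arcs has total $t$-measure strictly less than $\ell$. The hard step is therefore a sharp density bound on the arcs of $\BS$ along the parallel curve $\{\rho=r_0\}$: although the arcs are countable and may accumulate, each is part of a simple complete geodesic whose trace on all of $S$ is topologically constrained. I expect the decisive estimate to use the Gauss--Bonnet area $\area(S) = 4\pi(g-1)$ via a length--area comparison, so that pairing the area bound on the number of relevant arcs with the individual bad-interval width $\sim a_g\ell$ yields the threshold $a_g = 1/(4(4\pi(g-1))^2)$, the quadratic dependence on $4\pi(g-1)$ being the signature of this pairing.
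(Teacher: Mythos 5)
Your reduction (``find a parallel circle $\{\rho=r_0\}$ near the collar boundary and a $t_0$ avoiding all bad sub-arcs'') is a reasonable strategy, but the proposal has two genuine gaps, one structural and one fatal. Structurally, your description of $\BS\cap C$ is incomplete: besides $\gamma$ and geodesics crossing it, there are complete simple geodesics that \emph{spiral onto} $\gamma$ without ever meeting it, and these do enter the collar arbitrarily deeply (this is exactly the dichotomy in the paper's collar lemma: a simple complete geodesic meeting $\CC_\gamma$ either intersects $\gamma$ or converges to it). Their strands also cross the parallel circle, so they cannot simply be dropped from the bad set; in the paper they are precisely the delicate cases in the case analysis.

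The fatal gap is the step you yourself flag as the ``main obstacle'': it is the entire content of the theorem, and the resolution you sketch cannot work in the form ``(number of relevant arcs) $\times$ (bad-interval width $\sim a_g\ell$) $<\ell$''. There are infinitely many simple complete geodesics crossing $\gamma$ — for instance the images of a single crossing simple closed geodesic under all powers of the Dehn twist about $\gamma$ — hence infinitely many arcs crossing the circle $\{\rho=r_0\}$, and no Gauss--Bonnet count of arcs can be finite, let alone below $1/a_g$. What must be proved instead is that the points where these (and the spiralling) arcs meet the outer part of the half-collar \emph{concentrate} in a controlled region, leaving an explicit gap. That is what the paper does, and its use of the area bound is quite different from your guess: $\area(S)=4\pi(g-1)$ enters only to produce a pair of pants $Y\subset S$ containing $\gamma$ whose other two boundary geodesics $\gamma_1,\gamma_2$ satisfy $\cosh(\tfrac12\ell(\gamma_i))<\frac{\sinh(\frac12\ell(\gamma))}{\ell(\gamma)}\,4\pi(g-1)$; then, lifting a hexagon of $Y$ to $\Hyp$, hyperbolic trigonometry shows that every arc of a simple geodesic entering the half-collar is funneled either between the two perpendiculars from the ends of $\tilde\gamma$ to the opposite side $b$ or beyond the geodesic $f$ asymptotic to $\tilde\gamma$, and the region left over near the collar boundary contains a disk of radius $a_g$. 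The quadratic factor $(4\pi(g-1))^{-2}$ arises from that trigonometric separation estimate (distance between $f$ and the perpendiculars near the line at distance $w_\gamma$ from $\tilde\gamma$), not from pairing an arc count with an interval width. Without a concentration argument of this kind, your measure estimate on the bad set of $t_0$ is unsupported, so the proof does not go through.
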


The next step is to deal with surfaces with systole length bounded below. To do so we show the following local result which essentially says that given an embedded disk on the surface, there is a quantifiable gap of a certain radius within that disk. The radius only depends on the topology of the underlying surface (and the size of the initial disk of course). 

\begin{theorem}\label{thm:lq1-i} Assume $s = \min\{\frac{1}{2}\sys(S),\frac{1}{3} \}$. Then for any $\rho \leq s$ and any disk $B_{\rho}$ of radius $\rho$ in $S$ there exists a point $p \in B_{\rho}$ such that
\begin{equation*}
\dist(p,\BS) \geq \rho^2 e^{-M(g-1)},
\end{equation*}
where $M$ is an explicit constant that depends only on $s$. 
\end{theorem}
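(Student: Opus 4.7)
The goal is to locate a sub-disk of $B_\rho$ disjoint from the Birman--Series set $\BS$. I would split the argument into three parts: localize, bound the total length of $\BS$ inside $B_\rho$, and extract a gap by an area comparison.

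\emph{Localization.} Since $\rho \leq \tfrac{1}{2}\sys(S)$, the disk $B_\rho$ is embedded in $S$ and isometric to a hyperbolic disk of radius $\rho$. Lifting to $\Hyp^2$ one obtains a single copy $\widetilde{B}_\rho$, and $\BS \cap B_\rho$ is identified with a union of geodesic chords of $\widetilde{B}_\rho$ coming from lifts of all simple complete geodesics of $S$.

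\emph{Complexity bound (main step).} Fix a pants decomposition of $S$ of bounded Bers length, using $\sys(S) \geq 2s$ and $s \leq \tfrac{1}{3}$ to control the shape of each pair of pants. In each pair of pants, the restriction of any simple lamination is supported on at most six topological arc types; geometrically, the portion of $\BS$ in each pants is contained in a union of six foliated geodesic ``strips''. Summing contributions from the $2g-2$ pants, I expect a bound of the form
\begin{equation*}
\mathrm{Length}(\BS \cap B_\rho) \;\leq\; K(s) \cdot e^{M_0(g-1)},
\end{equation*}
for constants $K(s), M_0(s)$ depending only on $s$, where $\mathrm{Length}$ denotes total $1$-dimensional Hausdorff measure of the chord family.

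\emph{Area argument.} Each chord of $\BS \cap B_\rho$ has length at most $2\rho$. For $\delta > 0$, the $\delta$-neighborhood in $B_\rho$ of the chord family has area at most $2\delta \cdot \mathrm{Length}(\BS \cap B_\rho) \leq 2\delta K(s) e^{M_0(g-1)}$. On the other hand, $\mathrm{area}(B_\rho) \geq \pi \rho^2/2$ for $\rho \leq 1/3$. Choosing $\delta = \rho^2 e^{-M(g-1)}$ for an appropriate $M = M(s)$ makes the former strictly smaller than the latter, so some point $p \in B_\rho$ must satisfy $\dist(p,\BS) \geq \delta$, which is the claimed gap.

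\emph{Main difficulty.} The crux is the length bound in the second step: a quantitative Birman--Series estimate asserting that the total length of $\BS$ inside any embedded disk of radius $\rho \leq s$ is uniformly bounded, \emph{independently of $\rho$}, by an explicit exponential-in-$g$ quantity. This requires tracking through a Bers pants decomposition how the six arc types per pants accumulate chords in a small disk, how their foliated strips overlap, and how their transverse widths are controlled by $s$. The exponential factor $e^{M_0(g-1)}$ most plausibly arises from a multiplicative $C(s)$ factor contributed by each of the $2g-2$ pants, rather than from any deep dynamical phenomenon; making $M_0$ explicit in terms of $s$ is where essentially all the real work lies.
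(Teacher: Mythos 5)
There is a genuine gap, and it sits exactly where you flagged "the main difficulty": the length bound in your second step is not just unproven, it is false as formulated. The set $\BS\cap B_\rho$ is not a countable family of chords of finite total length. A minimal filling geodesic lamination has uncountably many leaves, and a disk meeting it meets uncountably many pairwise disjoint chords, each of definite length; any uncountable disjoint family of segments of length bounded below has infinite $1$-dimensional Hausdorff measure. So $\mathrm{Length}(\BS\cap B_\rho)=\infty$ for a typical disk, and no area comparison of the form $2\delta\cdot\mathrm{Length}$ versus $\pi\rho^2/2$ can get off the ground. The "six arc types per pants" observation does not repair this: it bounds the number of homotopy classes of arcs in one pair of pants, but each class is foliated by uncountably many parallel geodesic chords, and nothing in your sketch controls how thin that foliated strip is inside $B_\rho$.

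The mechanism that actually makes the theorem true is a trade-off that your proposal omits entirely. One extends each chord $c$ to a much longer simple arc $\bar c$ of length about $2R$, assigns to it a combinatorial itinerary through a cell decomposition of the surface (the paper uses Voronoi cells of an $\varepsilon$-net, with $O(g/\varepsilon^2)$ edges), and replaces $\bar c$ by a geodesic "model strand" with the same itinerary. Two facts are then played against each other: (i) the number of possible itineraries of length $R$, hence of model strands, grows only polynomially in $R$, of degree $G\sim 97(g-1)/\varepsilon^2$ (a counting lemma for simple combinatorial paths); and (ii) two long geodesic arcs with nearby endpoints are \emph{exponentially} close in their middle, so every chord $c$ lies within $3e^{-R}$ of its model strand (a hyperbolic trigonometry lemma for trirectangles). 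The area argument is then applied not to $\BS$ itself but to the union of $O(R^G)$ tubular neighborhoods of width $e^{-R}$ around the model strands, and one optimizes $R\approx 2\log(1/\rho)+2G\log(134/\varepsilon)$ to beat the disk area $\pi\rho^2$. This is where both the $\rho^2$ and the $e^{-M(g-1)}$ in the statement come from; neither is recoverable from a single $\rho$-independent "total length" bound. To salvage your outline you would need to replace the length bound by a statement of the form "the chords fall into at most $P(R)$ classes, each contained in a strip of width $e^{-R}$," and prove both halves — which is essentially the entire content of the paper's Sections 3.1--3.6.
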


The constant $M$ can be taken to be $\frac{194}{s^2} \log(\frac{134}{s})$ and together with the result for thin surfaces, this provides a quantifiable lower bound on $K_g$. Using the same techniques it also holds for (sufficiently thick) surfaces with cusps. 

Our final goal is to establish a quantified local result which does not depend on systole length. It implies an explicit bound on the constant $K_{g,n}$ discussed previously.

\begin{theorem}\label{thm:lq3-i} Let $S$ be a hyperbolic surface of genus $g$ with $n$ cusps and $B_{\rho}$ a disk of radius $\rho$ in the $\varepsilon$-thick part of $S$, where $0 < \rho < \varepsilon \leq \sfrac{1}{3}$. Then there exists a point $p \in B_{\rho}$ such that
\begin{equation*}
\dist(p,\BS) \geq e^{-3^\kappa R},
\end{equation*}
where $\kappa = 3g-3+n$ and $R = 2 \log \frac{1}{\rho} + M(g-1+\sfrac{n}{2})$ with $M=\frac{195}{\varepsilon^2}\log \frac{134}{\varepsilon}$.
\end{theorem}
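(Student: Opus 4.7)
My plan is to prove Theorem \ref{thm:lq3-i} by induction on the topological complexity $\kappa = 3g-3+n$, using a version of the statement valid on surfaces that may carry cusps produced by cutting along short geodesics. The base case is the thick one: if $\sys(S) \geq 2\varepsilon$, then $\rho < \varepsilon \leq \tfrac{1}{2}\sys(S)$, so $\rho \leq s$ in the notation of Theorem \ref{thm:lq1-i}. Applying that theorem in its cusped form yields $\dist(p,\BS) \geq \rho^2 e^{-M(g-1+n/2)} = e^{-R}$, which implies the conclusion since $3^\kappa \geq 1$. The slightly enlarged constant $M = \tfrac{195}{\varepsilon^2}\log\tfrac{134}{\varepsilon}$ (compared to $\tfrac{194}{\varepsilon^2}\log\tfrac{134}{\varepsilon}$ in Theorem \ref{thm:lq1-i}) is chosen precisely so that error terms introduced in the inductive step can be absorbed.

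For the inductive step, suppose $\sys(S) < 2\varepsilon$, and let $\gamma$ be a shortest simple closed geodesic. By the collar lemma the standard collar of $\gamma$ has half-width $\log\cot(\sys(S)/2) > \log\cot(\varepsilon)$, which exceeds $\varepsilon$, so $B_\rho$ (lying in the $\varepsilon$-thick part) is disjoint from this collar. Cut $S$ along $\gamma$ to obtain a surface $S'$ of complexity $\kappa - 1$ with two new cusps replacing $\gamma$; the disk $B_\rho$ embeds into the component $S_0'$ that contains it. A direct computation with Euler characteristic shows that $g_0'-1+n_0'/2 \leq g-1+n/2$ in both the non-separating and separating cases, so the inductive parameter $R_0'$ on $S_0'$ satisfies $R_0' \leq R$.

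The key geometric observation will be that the Birman--Series set of $S$, restricted to the image of $S_0'$, is contained in the generalized Birman--Series set of $S_0'$ (now including simple arcs with endpoints at the new cusps): a simple complete geodesic on $S$ either avoids $\gamma$ (and survives on $S_0'$) or meets it transversally and decomposes into a finite collection of simple arcs on $S_0'$, while simple complete geodesics spiralling toward $\gamma$ contribute half-infinite simple arcs. Applying the inductive hypothesis to $S_0'$ then produces a point $p \in B_\rho$ with $\dist_{S_0'}(p, \BS(S_0')) \geq e^{-3^{\kappa - 1} R_0'}$.

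The main obstacle — and the reason for the jump from $3^{\kappa-1}$ to $3^\kappa$ in the exponent at each inductive step — is the quantitative conversion between simple geodesic data on $S$ and on $S_0'$. A single simple closed geodesic on $S$ crossing $\gamma$ can produce on $S_0'$ as many simple arcs as its intersection number with $\gamma$, a number controlled by arc length divided by the collar width; these arcs can also wind many times inside the collar before escaping to the thick part. Quantifying this density blow-up, tracking how it compounds with the previous steps, and verifying that the cubic loss in the gap radius is exactly what the constants allow, is where the principal technical work lies. Once that conversion is established, combining it with $R_0' \leq R$ yields $\dist(p,\BS) \geq e^{-3 \cdot 3^{\kappa - 1} R} = e^{-3^\kappa R}$, completing the induction.
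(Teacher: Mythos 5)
Your outline has a genuine gap at its central step, and the surrounding framework is also not sound as stated. Cutting $S$ along a short geodesic $\gamma$ does not produce a hyperbolic surface with two new cusps: it produces a surface with geodesic boundary, and turning that boundary into cusps requires changing the hyperbolic metric (pinching). After such a deformation the simple complete geodesics of $S$ are no longer geodesics of $S_0'$, so the claimed containment of $\BS(S)\cap S_0'$ in a ``generalized Birman--Series set'' of $S_0'$ is not literally available, and the inductive hypothesis (which is a metric statement about distances on $S_0'$) cannot be applied to conclude anything about $\dist_S(p,\BS(S))$ without a quantitative comparison of the two metrics and of the two geodesic sets. That comparison -- together with the claim that the loss per step is exactly a factor $3$ in the exponent -- is precisely what you defer as ``where the principal technical work lies''; but this is not a technical remainder, it is the entire content of the theorem, and nothing in the proposal indicates how it would be carried out or why the constants $195$ versus $194$ would absorb it. (Two smaller issues: Theorem \ref{thm:lq1-i} is proved for closed surfaces, so its ``cusped form'' in your base case is itself something to be established; and an intersection number with $\gamma$ is not bounded by length divided by collar width in the way needed, since arcs can wind arbitrarily inside the collar -- the winding must be handled structurally, not by counting arcs.)

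The paper avoids all of this by never changing the metric. It first proves a version with cusps and short geodesics (Theorem \ref{thm:lq2}) in which the gap estimate degrades by the factor $\sigma^2=\bigl(\ell(\beta_1)\cdots\ell(\beta_h)\bigr)^2$, the product over the geodesics of length $\le 2\varepsilon$; the collars of these geodesics are handled inside the same model-strand machinery via winding numbers, Dehn-twist unwinding and extra ``terminal segments''. The systole-independent Theorem \ref{thm:lq3-i} then comes from the observation that a geodesic with $\ell(\beta_k)\le \mathcal{L}(R)=4\varepsilon^2 e^{-R}$ has a collar so wide that, at scale $R$, it behaves exactly like a pair of cusps (no traversing arcs, a single terminal direction), hence contributes no $\sigma$-loss. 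Iterating -- declare geodesics shorter than $\mathcal{L}(R_{k-1})$ to be cusp-like, apply Theorem \ref{thm:lq2} to the at most $k$ remaining short geodesics -- gives the recursion $R_k=3R_{k-1}+2\log\frac{1}{4\varepsilon^2}$, which terminates after at most $\kappa=3g-3+n$ steps and is the true source of the $3^{\kappa}$ in the exponent. So the factor $3$ you guessed is correct, but it arises from the interplay between the $\sigma^2$-loss and the threshold $\mathcal{L}(R)\sim e^{-R}$ within a fixed metric, not from a pinching induction; if you want to pursue your route you would have to build exactly this kind of quantitative collar-versus-cusp comparison, at which point you would essentially be reproving the paper's argument.
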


Note than in the above, as $B_{\rho}$ always lies in the $\rho$-think part, so in particular the result holds with $\varepsilon = \rho$. Our local estimates without prior assumptions on systole length are weaker. This comes from our method in which we use a sort of classification of short curves: if they are sufficiently short with respect to the other short curves we treat them like cusps and if not we treat them as ``short but not too short curves". 

We end this introduction with some images of the Birman-Series set for genus $2$ surfaces illustrating its intricacy; although these geodesics are nowhere dense, the gaps are already quite small.

\begin{figure}[h]
 \vspace{0pt}
 \begin{center}
 \leavevmode
 \SetLabels
(0.162*0.50) $\scriptstyle m=48,\; t=21+15\sqrt{2}$\\
(0.50*0.50) $\scriptstyle m=24,\; t=33+23\sqrt{2}$\\
(0.837*0.50) $\scriptstyle m=48,\; t=109+77\sqrt{2}$\\
(0.162*-0.04) $\scriptstyle m=96,\; t=149+105\sqrt{2}$\\
(0.50*-0.04) $\scriptstyle m=48,\; t=273+193\sqrt{2}$\\
(0.837*-0.04) $\scriptstyle m=24,\; t=1991+1408\sqrt{2}$\\
 \endSetLabels
 \AffixLabels{
 \includegraphics[width=15.0cm]{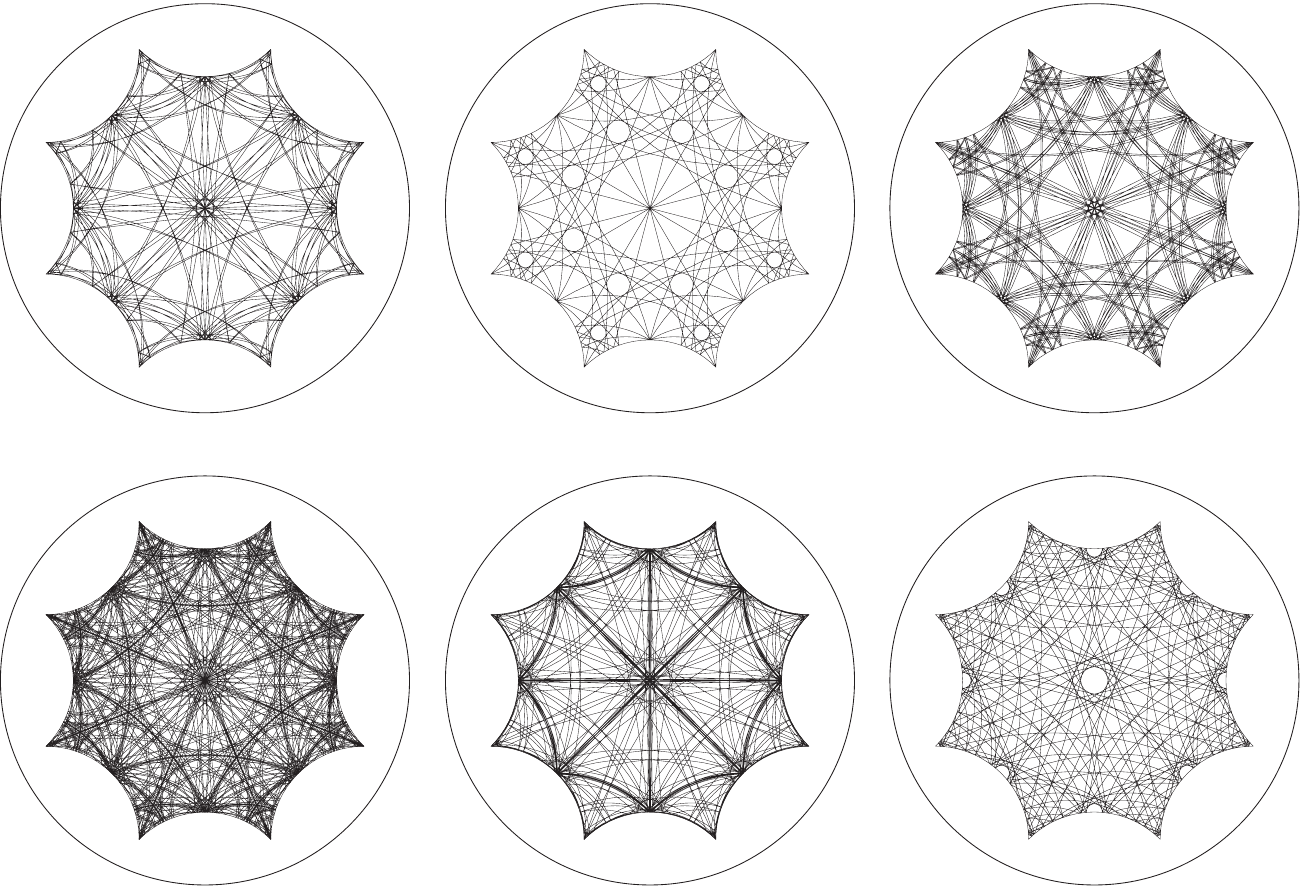} }
 \end{center}
 \vspace{-0pt}
 \caption{\label{Fig:Bolza} Simple closed geodesics on the Bolza surface}
 \end{figure}
 
 Figure \ref{Fig:Bolza} shows a number of simple closed geodesics on the regular fundamental domain of the Bolza surface, the genus 2 Riemann surface with the maximal number of symmetries. Originally the idea was to show all the simple closed geodesics up to length roughly 15 on the same fundamental domain. However, even under optimal printing conditions, the fundamental domain came out evenly black. We have therefore split up the geodesics into families where all members of a family have the same length. Figure \ref{Fig:Bolza} shows a few. In this figure, $m$ is the multiplicity, i.e.\ the number of geodesics in the family, and $t = \arccosh(\sfrac{\ell(\gamma)}{2})$ is half the trace of the conjugacy class in the Fuchsian group that represents a closed geodesic of length $\ell(\gamma)$.

\begin{figure}[t]
 \vspace{0pt}
 \begin{center}
 \leavevmode
 \SetLabels
 \endSetLabels
 \AffixLabels{
 \includegraphics[width=15.0cm]{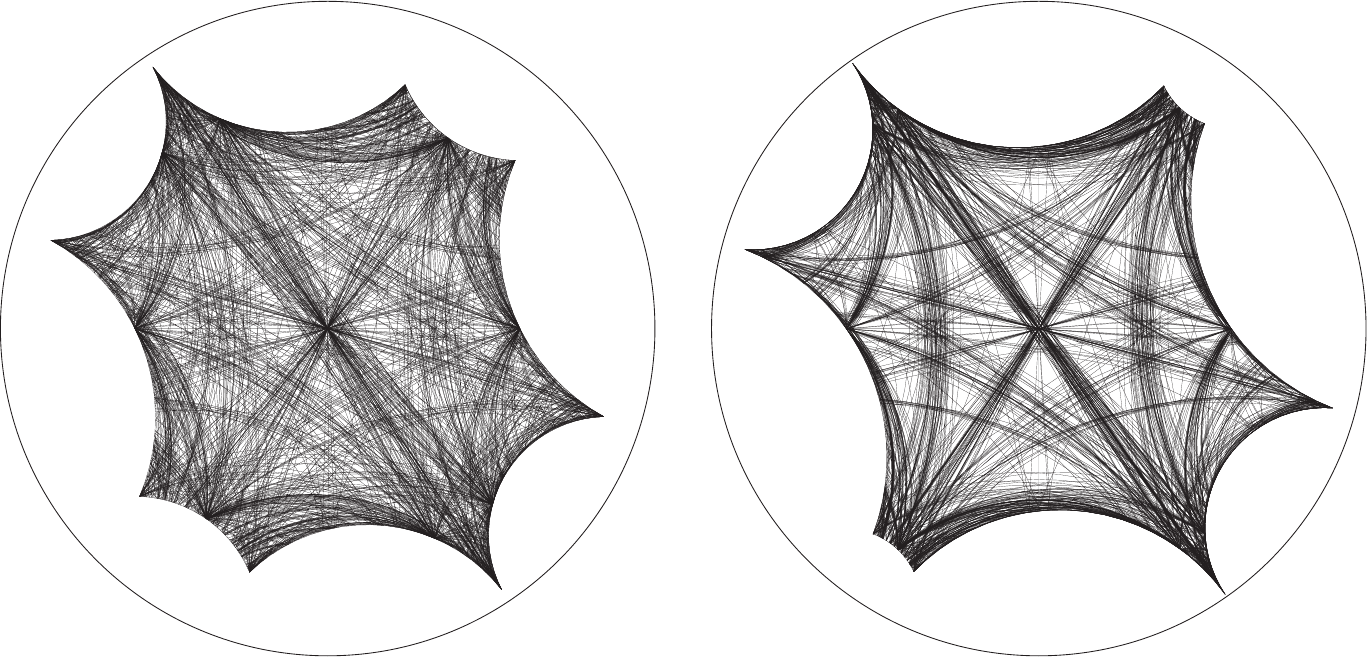} }
 \end{center}
 \vspace{-0pt}
 \caption{\label{Fig:BolzaP} Perturbations of the Bolza surface}
 \end{figure}

Figure \ref{Fig:BolzaP} shows roughly the first (ordered by length) three hundred simple closed geodesics on two other genus 2 surfaces. These surfaces were obtained by perturbing Fenchel-Nielsen length and twist parameters of the Bolza surface. The size of the largest gaps appears to grow. Although we have no real evidence other than these figures, we wonder if the Bolza surface might be the surface where the largest gaps are the smallest. 

{\bf Organization.} The article is organized as follows. In Section 2 we prove Theorem \ref{thm:thinsurfaces-i}, denoted Theorem \ref{thm:thinsurfaces} in the sequel. Section 3 is dedicated to the proof of Theorem \ref{thm:lq1-i}, referred to later as Theorem \ref{thm:lq1}. Thin surfaces and surfaces with cusps are treated in Section 4 where we prove Theorem \ref{thm:lq3-i}, relabelled as Theorem \ref{thm:lq3}. The article is concluded by an appendix which contains two technical results somewhat different in nature from the rest of the article. 

{\bf Acknowledgements.} We heartily thank Chris Judge, Manuel Racle, Klaus-Dieter Semmler and Caroline Series for enlightening conversations and their encouragement. 

\section{Gaps on thin surfaces}\label{sec:gapthn}

In this part we show the following result, where $a_g = \frac{1}{4\cdot (4 \pi (g-1))^2}$.

\begin{theorem}\label{thm:thinsurfaces} If $\sys(S) \leq a_g$, then $C_S\geq a_g$.
\end{theorem}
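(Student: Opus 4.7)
The plan is to exploit the collar lemma to locate the gap inside the collar of the systole. Let $\gamma$ be a shortest non-trivial simple closed geodesic, of length $\ell=\sys(S)\le a_g$; by the collar lemma, $\gamma$ admits a standard collar $C(\gamma)$ of half-width $w=\arcsinh\bigl(1/\sinh(\ell/2)\bigr)\ge\log(2/\ell)$, which is huge compared to $a_g$. I work in Fermi coordinates $(x,y)\in(\R/\ell\Z)\times(-w,w)$ on $C(\gamma)$, with metric $\cosh^2(y)\,dx^2+dy^2$; this exhibits $C(\gamma)$ as (the quotient of) a hyperbolic surface of revolution, so geodesics obey Clairaut's relation $\cosh(y)\cos\theta=c$, where $\theta$ is the angle with $\partial_x$.

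Next I choose an intermediate level $y_0>0$ well inside the collar, large enough that the circle $C_{y_0}=\{y=y_0\}$ has length $\ell\cosh(y_0)$ comfortably larger than $4 a_g\cdot N$, where $N$ will be a bound (linear in $g-1$) on the number of distinct simple-geodesic threads meeting $C_{y_0}$. Such a $y_0$ lies well inside the collar precisely because $a_g$ is calibrated as $1/(4(4\pi(g-1))^2)$; concretely $y_0$ is chosen so that $a_g<y_0<w-a_g$, which guarantees that any metric disk of radius $a_g$ centered on $C_{y_0}$ is embedded in the collar and does not wrap around $\gamma$.

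The heart of the argument is then a structural bound on how simple complete geodesics thread through $C_{y_0}$. Any simple geodesic distinct from $\gamma$ meeting $C(\gamma)$ does so along geodesic arcs whose shape inside the collar is completely pinned by the Clairaut parameter $c$, and the simplicity assumption forces distinct arcs (and distinct $\ell$-translates of a given arc in the annular cover) to be disjoint. Combined with the Gauss--Bonnet bound $\mathrm{Area}(S)=4\pi(g-1)$ on the surface outside the collar --- and, if useful, with the pants-decomposition picture of the complement, where only a bounded number of simple arc types exist in each pair of pants --- this caps the number of simple-geodesic threads that can cross $C_{y_0}$ by some $N=O(g-1)$. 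The trace of $\BS\setminus\gamma$ on $C_{y_0}$ is therefore carried by at most $N$ points, and by the pigeonhole principle there is an arc on $C_{y_0}$ of length at least $4 a_g$ free of them. Centering a disk of radius $a_g$ in this arc produces an embedded disk disjoint from every simple complete geodesic, which yields $C_S\ge a_g$.

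The hardest step will be the structural bound $N=O(g-1)$ on the number of simple-geodesic threads meeting $C_{y_0}$; this is what produces the area-squared form of $a_g=1/(4A^2)$, one factor of $A=4\pi(g-1)$ coming from $N$ and the other from the circumference $\ell\cosh(y_0)$ needed to guarantee a gap of size $2a_g$ on $C_{y_0}$. The remaining ingredients --- the collar lemma, Clairaut's relation on the collar, and the pigeonhole step on $C_{y_0}$ --- are standard, and the choice of $y_0$ is calibrated exactly to make the two bounds match.
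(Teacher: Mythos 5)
Your setup (locating the gap inside the collar of the short geodesic, using the collar lemma) starts in the same place as the paper, but the step your whole argument rests on --- that the trace of $\BS\setminus\gamma$ on the circle $C_{y_0}$ consists of at most $N=O(g-1)$ points, so that a pigeonhole along the circle produces a free arc --- is not just unproved, it is false. Infinitely many simple closed geodesics cross $\gamma$ (their number grows polynomially in length), each such crossing produces a trace point on $C_{y_0}$, and a single complete simple geodesic (e.g.\ a leaf of a filling lamination) can cross $\gamma$, hence $C_{y_0}$, infinitely often. Also, distinct simple geodesics are not disjoint from one another, so simplicity gives you no global disjointness to count with: $\BS\cap C_{y_0}$ is an infinite (indeed uncountable) closed, nowhere dense set, not a finite set of ``threads''. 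Quantifying how much of the circle such an infinite trace can occupy is essentially the content of the much harder local results (Theorems \ref{thm:lq1-i} and \ref{thm:lq3-i}); it cannot be dispatched by Clairaut plus Gauss--Bonnet, and your attribution of the two factors of $4\pi(g-1)$ in $a_g$ to ``number of threads'' and ``circumference'' inherits this problem.

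What the paper does instead is structural rather than counting-based. Lemma \ref{lem:lengthbound} uses the area bound $4\pi(g-1)$ to build a pair of pants $Y$ with boundary $\gamma,\gamma_1,\gamma_2$ whose cuff lengths satisfy $\cosh(\tfrac12\ell(\gamma_i))\lesssim 4\pi(g-1)$ (this is where the genus enters, and it enters squared through the later trigonometry, not through a count of geodesics). Lemma \ref{lem:collar} shows any complete simple geodesic entering the collar $\CC_\gamma$ must cross $\gamma$ or spiral onto it, so every arc of $\BS$ penetrating deep into the collar is a component of $\eta\cap\mathrm{int}(Y)$ with an end converging to $\gamma$; lifting to the hexagon $G$ in $\Hyp$, a three-case analysis (other end converging to $\gamma$, $\gamma_1$, or $\gamma_2$) confines all such lifts to explicit regions bounded by the perpendiculars $g_1,g_2$ and the asymptotic geodesic $f$, leaving an explicit region $\Delta$ just inside the collar boundary that no simple geodesic can meet; hyperbolic trigonometry then shows $\Delta$ contains a disk of radius $a_g$ once $\ell(\gamma)\leq a_g$. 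If you want to salvage your approach, you would need to replace the false finiteness claim by exactly this kind of confinement statement for the (infinitely many) arcs crossing or spiraling into the collar, which is the real content of the proof.
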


We shall show explicitly where on the surface a ``forbidden disk'' with the indicated radius may be found. For this we first construct a certain pair of pants on $S$. 
 \begin{lemma}\label{lem:lengthbound}%
Let $\gamma$ be a simple closed geodesic on $S$ of length $\ell(\gamma) < \frac12$. Then there exists a pair of pants $Y \subset S$ with boundary geodesics $\gamma$, $\gamma_1$, $\gamma_2$ such that

 \begin{equation}\label{eq:lengthbound}%
\cosh(\tfrac12 \ell(\gamma_i))< 
\frac{\sinh(\frac12 \ell(\gamma))}{\ell(\gamma)}\cdot 4 \pi (g-1), \quad i=1,2
 \end{equation}%
 \end{lemma}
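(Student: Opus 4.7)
The plan is to build $Y$ from a maximal one-sided collar of $\gamma$, and to turn the resulting area bound into a length bound via a symmetric right-angled hexagon identity.

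First, I would fix a side of $\gamma$ and let $w$ be the supremum of those $r > 0$ for which the one-sided collar $C_r := \{p \in S : \dist(p,\gamma) \leq r,\ p \text{ on the chosen side}\}$ is embedded. By the Collar Lemma $w > 0$. At $r = w$ two normals from $\gamma$ first meet, producing a geodesic arc $a \subset S$ with both endpoints on $\gamma$, perpendicular to $\gamma$ at each end, and of length $2w$. The hypothesis $\ell(\gamma) < \tfrac12$ together with the quantitative Collar Lemma on the opposite side of $\gamma$ ensures that the theta graph $\gamma \cup a$ has an embedded regular neighborhood that is a pair of pants $Y \subset S$ whose three boundary geodesics are $\gamma, \gamma_1, \gamma_2$, with $\gamma_1, \gamma_2$ distinct, essential, and non-peripheral.

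Since $C_w$ is embedded, its area satisfies
\[
\ell(\gamma)\sinh(w) = \operatorname{area}(C_w) \leq \operatorname{area}(S) = 4\pi(g-1),
\]
so $\sinh(w) \leq 4\pi(g-1)/\ell(\gamma)$. Inside $Y$, the arc $a$ is the shortest orthogeodesic from $\gamma$ to itself; by the reflective symmetry of $Y$, its midpoint lies on the orthogeodesic $p_{12}$ between $\gamma_1, \gamma_2$, and $a \perp p_{12}$ there. Splitting each of the two right-angled hexagons of $Y$ along the half of $a$ it contains produces right-angled pentagons; combining the pentagon identity with the standard hexagon formula for the orthogeodesic $p_{01}$ between $\gamma$ and $\gamma_1$ yields the symmetric identity
\[
\sinh^2\!\bigl(\tfrac{1}{2}\ell(\gamma)\bigr)\sinh^2(w) = \cosh^2\!\bigl(\tfrac{1}{2}\ell(\gamma_1)\bigr) + \cosh^2\!\bigl(\tfrac{1}{2}\ell(\gamma_2)\bigr) + 2\cosh\!\bigl(\tfrac{1}{2}\ell(\gamma)\bigr)\cosh\!\bigl(\tfrac{1}{2}\ell(\gamma_1)\bigr)\cosh\!\bigl(\tfrac{1}{2}\ell(\gamma_2)\bigr).
\]
For each $i \in \{1,2\}$ the right-hand side strictly exceeds $\cosh^2\!\bigl(\tfrac{1}{2}\ell(\gamma_i)\bigr)$, so
\[
\cosh\!\bigl(\tfrac{1}{2}\ell(\gamma_i)\bigr) < \sinh\!\bigl(\tfrac{1}{2}\ell(\gamma)\bigr)\sinh(w) \leq \frac{\sinh\!\bigl(\tfrac{1}{2}\ell(\gamma)\bigr)}{\ell(\gamma)} \cdot 4\pi(g-1),
\]
which is the desired inequality for both $i=1,2$ simultaneously.

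The main obstacle is the topological step of verifying that the theta graph $\gamma \cup a$ really does yield an embedded pair of pants with two new boundary geodesics that are distinct, essential, and non-peripheral; this is where the hypothesis $\ell(\gamma) < \tfrac{1}{2}$ plays a role, via the wide opposite-side collar furnished by the Collar Lemma, which rules out the degenerate configurations (e.g.\ $a$ being a loop at a single point, or $\gamma_1 = \gamma_2$). Once the pair of pants is in hand, the symmetric identity is routine right-angled pentagon/hexagon bookkeeping, and the length estimate then drops out immediately from the area bound.
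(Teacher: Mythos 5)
Your proposal follows the paper's argument essentially verbatim: grow a maximal one-sided collar of $\gamma$ until it self-touches, obtain the pair of pants as a neighborhood of $\gamma$ together with the resulting orthogonal arc of length $2w$, and combine the area bound $\ell(\gamma)\sinh(w)\leq 4\pi(g-1)$ with right-angled pentagon trigonometry. The only difference is cosmetic: where the paper applies the pentagon formula $\cosh(\tfrac12\ell(\gamma_i))=\sinh(w)\sinh(\tfrac12 c_i)$ directly (with $c_1+c_2=\ell(\gamma)$), you repackage those same two relations into a (correct) symmetric identity and drop positive terms, arriving at the identical bound $\cosh(\tfrac12\ell(\gamma_i))<\sinh(\tfrac12\ell(\gamma))\sinh(w)$.
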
%
Before giving the proof we note that the length bound of $\frac{1}{2}$ on $\gamma$ can be replaced by $\arcsinh (1)$ which is ``optimal" for the argument we give. 

 \begin{proof}%
We adapt an argument from \cite[Section 5.2]{BuserBook}. Cut $S$ open along $\gamma$ into a bordered surface $\tilde{S}$ (consisting of either one or two connected components) with copies $\gamma'$, $\gamma''$ of $\gamma$ on the boundary. Take the component $S'$ of $\tilde{S}$ that has $\gamma'$ on the boundary and look at the sets $C(r) = \{ p \in S' \mid \dist(p,\gamma')\leq r \}$ for $r>0$. For small $r$ this set is an annulus and it does not intersect $\gamma''$. As we let $r$ grow, some first value $r_\gamma$ will be reached where one of these two properties ceases to hold. Now, the injectivity radius near $\gamma''$ is far too small to allow the $C(r)$ to come close to $\gamma''$ as long as $r< r_\gamma$. (Their boundaries are simple closed curves of geodesic curvature smaller than the curvature of a horocycle). Hence, it's the annulus property that ceases to hold for $C(r_\gamma)$.

The rest is exactly as in \cite[Section 5.2]{BuserBook}: There are two geodesic segments of length $r_\gamma$ emanating orthogonally from $\gamma'$ and meeting each other under the angle $\pi$ at their endpoints, thus forming a smooth geodesic arc $\eta$ of length $2r_\gamma$. The endpoints of $\eta$ on $\gamma'$ dissect $\gamma'$ into two arcs $c_1$, $c_2$. The closed curves $c_1 \eta$ and $\eta^{-1}c_2$ are freely homotopic to simple closed geodesics $\gamma_1$, $\gamma_2$ that together with $\gamma'$ form the boundary of a pair of pants $Y \subset S'$ and by {\bf formula \cite[2.3.4(i)]{BuserBook}} the lengths satisfy
$\cosh(\tfrac12 \ell(\gamma_i)) = \sinh(r_\gamma)\sinh(\tfrac12 c_i)< \sinh(r_\gamma)\sinh(\tfrac12 \ell(\gamma))$ for $i=1,2$. Since the interior of $C(r_\gamma)$ is still an annulus we have
$\area(C(r_\gamma)) = \ell(\gamma) \sinh(r_\gamma) < \area(S) = 4\pi(g-1)$ 
and the lemma follows.
\end{proof}%

Throughout the paper we will be using hyperbolic trigonometry. As in the above proof, we will always refer to formula numbers from \cite{BuserBook}. 

We also need an extension of the usual collar lemma \cite[chapter 4]{BuserBook} that includes complete non closed simple geodesics. For any simple closed geodesic $\gamma$ on $S$ the \emph{width} is the quantity
 \begin{equation}\label{eq:width}%
w_\gamma = \arcsinh(1/\sinh(\tfrac12 \ell(\gamma)))
 \end{equation}%
The collar theorem states among other things that the \emph{collar} $\CC_\gamma = \{p \in S \mid \dist(p,\gamma) < w_\gamma \}$ is homeomorphic to an annulus. (In contrast to the above $C(r)$ the collars $\CC_\gamma$ are defined as open sets.) The needed complement is the following.
 \begin{lemma}[Simple geodesics in collars]\label{lem:collar}%
Any complete simple geodesic intersecting $\CC_\gamma$ either intersects $\gamma$ or converges to it.
 \end{lemma}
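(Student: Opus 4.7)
The plan is to lift the picture to $\mathbb H^2$ and read off what happens from the endpoints at infinity. I would fix a lift $\tilde\gamma$ of $\gamma$ and work in the upper half-plane model with $\tilde\gamma$ equal to the imaginary axis, so that the primitive hyperbolic isometry of $\pi_1(S)$ translating along $\tilde\gamma$ acts as $T\colon z\mapsto e^{\ell(\gamma)}z$. By the standard collar theorem the strip of width $w_\gamma$ about $\tilde\gamma$ projects injectively onto $\CC_\gamma$, so any lift of $\CC_\gamma$ that meets this strip coincides with it.

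Let $\delta$ be a complete simple geodesic meeting $\CC_\gamma$ and choose a lift $\tilde\delta$ entering the $w_\gamma$-strip about $\tilde\gamma$. If $\tilde\delta$ meets $\tilde\gamma$ then $\delta$ intersects $\gamma$ and we are done, so assume it does not. Then the two endpoints of $\tilde\delta$ on $\partial\mathbb H^2$ lie on the same side of $\tilde\gamma$, say $a,b\in\mathbb R$ with $0\leq a<b\leq\infty$. The goal is to show that $a=0$ or $b=\infty$; in either case $\tilde\delta$ is asymptotic to $\tilde\gamma$, and $\delta$ converges to $\gamma$ on $S$, as required.

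Suppose for contradiction that $0<a<b<\infty$. A direct computation in the half-plane, minimizing distance from the imaginary axis along the semicircle with endpoints $a,b$, yields $\sinh d(\tilde\delta,\tilde\gamma)=(a+b)/(b-a)=\coth(\tfrac12\log(b/a))$; combined with $\sinh w_\gamma=1/\sinh(\ell(\gamma)/2)$ the hypothesis $d(\tilde\delta,\tilde\gamma)<w_\gamma$ translates into a strict inequality $b/a>e^{\ell(\gamma)}$. But then the translate $T\tilde\delta$ has endpoints $e^{\ell(\gamma)}a,\,e^{\ell(\gamma)}b$ with $a<e^{\ell(\gamma)}a<b<e^{\ell(\gamma)}b$, so the endpoints of $\tilde\delta$ and $T\tilde\delta$ interlace and the two geodesics cross in $\mathbb H^2$. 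Their crossing point descends to a self-intersection of $\delta$ on $S$, contradicting simplicity.

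The only step that requires care is the algebraic conversion $d(\tilde\delta,\tilde\gamma)<w_\gamma\Rightarrow b/a>e^{\ell(\gamma)}$, which reduces to the monotone comparison $\tanh(\tfrac12\log(b/a))>\sinh(\ell(\gamma)/2)$; a minor technicality is that when $\ell(\gamma)\geq 2\arcsinh 1$ the collar width $w_\gamma$ is already smaller than the minimum distance $\arcsinh 1$ from $\tilde\gamma$ to any non-crossing geodesic, so no such $\tilde\delta$ can enter the strip at all and the lemma is automatic.
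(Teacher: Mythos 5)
Your proposal runs on the same engine as the paper's proof: compare a lift of the simple geodesic with its image under the deck transformation $T$ along $\gamma$; simplicity forces these two lifts to be disjoint, and disjointness forces the distance to $\tilde\gamma$ to be at least $w_\gamma$. The paper packages this as a right-angled hexagon split into two pentagons and invokes the pentagon formula at the point of $\eta$ closest to $\gamma$, whereas you work in explicit half-plane coordinates and detect the crossing of $\tilde\delta$ and $T\tilde\delta$ through interlacing endpoints at infinity. Your variant has one small advantage: you never need the distance from $\eta$ to $\gamma$ to be attained on the surface (the common perpendicular between the two disjoint, non-asymptotic lifts in $\mathbb H^2$ always exists), a point the paper's choice of ``a point $p$ on $\eta$ closest to $\gamma$'' quietly glosses over. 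Also note that the strip does not project \emph{injectively} onto $\CC_\gamma$ (only injectively modulo $\langle T\rangle$); what you actually need -- that some lift of $\delta$ enters the $w_\gamma$-strip -- follows already from the definition $\CC_\gamma=\{p:\dist(p,\gamma)<w_\gamma\}$.

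There is, however, a trigonometric slip that propagates into your last paragraph. For the geodesics with endpoint pairs $(0,\infty)$ and $(a,b)$, $0<a<b$, the correct identity is $\cosh d(\tilde\delta,\tilde\gamma)=\frac{a+b}{b-a}=\coth\bigl(\tfrac12\log\tfrac{b}{a}\bigr)$, equivalently $\sinh d(\tilde\delta,\tilde\gamma)\,\sinh\bigl(\tfrac12\log\tfrac{b}{a}\bigr)=1$, not $\sinh d=\frac{a+b}{b-a}$ as you wrote (your formula would give $d\to\arcsinh 1$ for asymptotic geodesics instead of $d\to 0$). With the correct identity the key step becomes an equivalence: $d<w_\gamma$ iff $\sinh\bigl(\tfrac12\log\tfrac{b}{a}\bigr)>\sinh\bigl(\tfrac12\ell(\gamma)\bigr)$ iff $b/a>e^{\ell(\gamma)}$, which is exactly the inequality the paper extracts from the pentagon formula, so your interlacing contradiction goes through verbatim and uniformly in $\ell(\gamma)$. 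Consequently the closing ``minor technicality'' should be deleted: it rests on the false assertion that any geodesic disjoint from $\tilde\gamma$ stays at distance at least $\arcsinh 1$ (asymptotic geodesics come arbitrarily close), which is an artifact of the misplaced $\sinh$; no case distinction on the length of $\gamma$ is needed. (As a further consolation, even your weaker reduction $\tanh\bigl(\tfrac12\log\tfrac{b}{a}\bigr)>\sinh\bigl(\tfrac12\ell(\gamma)\bigr)$ still implies $b/a>e^{\ell(\gamma)}$ since $\tanh x<\sinh x$, so the logical skeleton of your main case was sound all along.)
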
%
 \begin{proof}%
Let $\eta$ be a complete simple geodesic on $S$, closed or non closed, that neither intersects $\gamma$ nor converges to it. Let $p$ be a point on $\eta$ closest to $\gamma$. Let $a$ be a simple geodesic arc from $p$ to $\gamma$ orthogonal to both $\eta$ and $\gamma$ and of length $\ell(a) = \dist(\gamma, \eta) > 0$. In the universal cover of $S$ there are lifts $\tilde{\gamma}$ of $\gamma$ and $\tilde{\gamma_1}$, $\tilde{\gamma_2}$ of $\eta$, as in Figure~\ref{Fig:ExplicitGap} (which we use for two different purposes), together with lifts $a_1$, $a_2$ of $a$ from $\tilde{\gamma_1}$ and $\tilde{\gamma_2}$ to $\tilde{\gamma}$ whose endpoints on $\tilde{\gamma}$ are distance $\ell(\gamma)$ apart from each other. 


Since $\eta$ is simple its lifts $\tilde{\gamma_1}$ and $\tilde{\gamma_2}$ are disjoint; they may have a common endpoint at infinity, though. We thus have a, possibly degenerated, right-angled geodesic hexagon with three consecutive sides of lengths $\ell(a)$, $\ell(\gamma)$, $\ell(a)$. This hexagon splits into two isometric pentagons. Applying {\bf formula \cite[2.3.4(i)]{BuserBook}} to either of them we get
 \begin{equation*}%
\sinh(\ell(a)) \cdot \sinh(\frac12 \ell(\gamma)) \geq 1
 \end{equation*}%
and hence, $\dist(\eta,\gamma) \geq w_\gamma$.
 \end{proof}%


%
\begin{figure}[ht]
 \vspace{0pt}
 \begin{center}
 \leavevmode
 \SetLabels
 (0.20*0.32) $a_1$\\
 (0.80*0.315) $a_2$\\
 (0.06*0.16) $\tilde{\gamma_1}$\\
 (0.94*0.16) $\tilde{\gamma_2}$\\
 (0.5*0.96) $\tilde{\gamma}$\\
 (0.683*0.538) $w_\gamma$\\
 (0.35*0.195) $b$\\
 (0.38*1.00) $\omega_1$\\
 (0.62*1.00) $\omega_2$\\
 (0.82*-0.01) $\omega_3$\\
 (0.179*0.095) $B_0$\\
 (0.471*0.148) $B_1$\\
 (0.541*0.148) $B_2$\\
 (0.824*0.095) $B_3$\\
 (0.675*0.12) $M$\\
 (0.41*0.35) $\rho$\\
 (0.44*0.6) $g_1$\\
 (0.55*0.3) $g_2$\\
 (0.588*0.538) $p$\\
 (0.67*0.32) $f$\\
 (0.4*0.44) $h$\\
 \endSetLabels
 \AffixLabels{
 \includegraphics{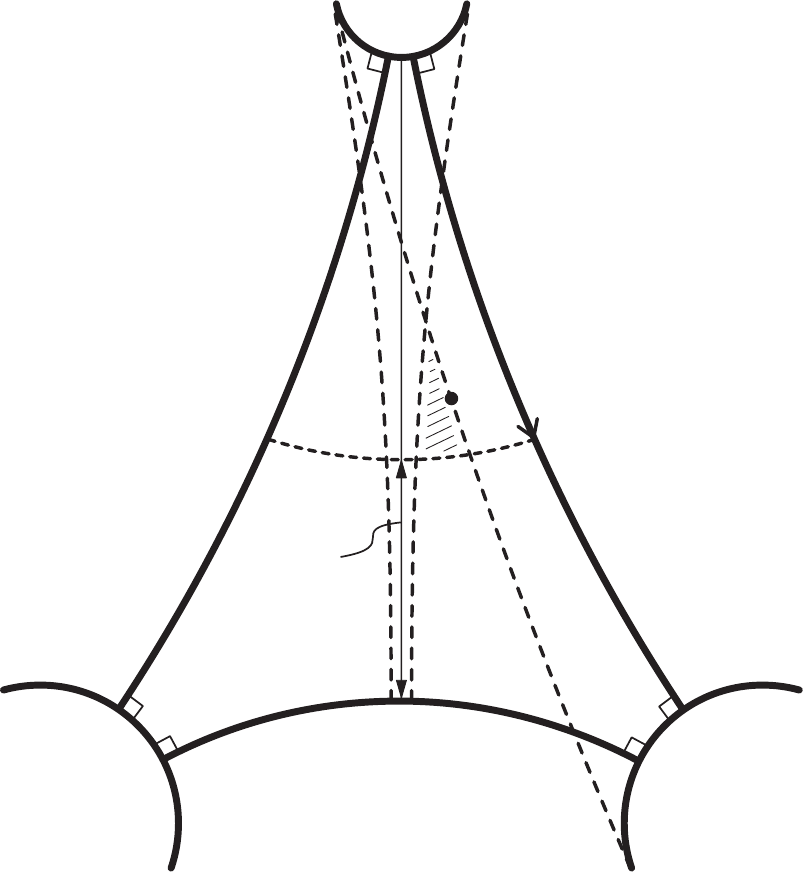} }
 \end{center}
 \vspace{-0pt}
 \caption{\label{Fig:ExplicitGap} The geodesic hexagon $G$ with a forbidden area}
 \end{figure}

 \begin{proof}[Proof of Theorem~\ref{thm:thinsurfaces}]%
Let $\gamma$ be a simple closed geodesic of length $\ell(\gamma) \leq a_g$, where we think of $a_g$ as being rather small; the bound for $a_g$ as in the theorem will show up towards the end of the proof.

By Lemma~\ref{lem:lengthbound} there exist simple closed geodesics $\gamma_1$, $\gamma_2$ with lengths bounded as in \eqref{eq:lengthbound} that together with $\gamma$ form the boundary of a pair of pants $Y$. We shall find a forbidden disk in the half collar $C(w_{\gamma})$ in $Y$.

$Y$ may be decomposed into two isometric right-angled geodesic hexagons by drawing the common perpendicular geodesic arcs $a_1$ from $\gamma_1$ to $\gamma$; $a_2$ from $\gamma$ to $\gamma_2$; and $b$ from $\gamma_2$ to $\gamma_1$. Figure~\ref{Fig:ExplicitGap} shows the lift $G$ of one of the hexagons in the universal covering $\Hyp$ of $S$. (It need not be symmetric as drawn.) The lifts of the three perpendiculars are named after their originals, the lifts of the boundary geodesics are named, respectively, $\tilde{\gamma}_1$, $\tilde{\gamma}$, $\tilde{\gamma}_2$. We let $\omega_1$, $\omega_2$ be the endpoints at infinity of $\tilde{\gamma}$, labelled in such a way that $\omega_1$ and $a_2$ lie on different sides of $a_1$. We also consider the endpoint $\omega_3$ at infinity of $\tilde{\gamma}_2$ that is separated from $a_2$ by $b$.

The dotted lines in Figure~\ref{Fig:ExplicitGap} are as follows. The vertical lines $g_1$, $g_2$ are the perpendicular geodesics from $\omega_1$, $\omega_2$ to $b$ with respective endpoints $B_1$, $B_2$ on $b$. The horizontal line $h$ consists of all points in the hexagon that have distance $w_\gamma$ from $\tilde{\gamma}$. Line $f$ is the geodesic from $\omega_1$ to $\omega_3$.

We will prove below that if $\gamma$ is as short as indicated, then the configuration of these lines is such that a shaded area $\Delta$ occurs, as drawn in the figure, consisting of points in the hexagon lying above $h$, to the right of $g_2$ and to the left of $f$. We will also estimate the size of $\Delta$. Anticipating this for a moment we prove that $\Delta$ is forbidden, i.e., no lift of a simple complete geodesic of $S$ in $\Hyp$ intersects $\Delta$.

Since $\Delta$ lies in a lift of $\CC_\gamma$ Lemma~\ref{lem:collar} implies that only then a simple geodesics $\eta$ may have a lift passing through $\Delta$ when $\eta$ intersect $\gamma$ or converges to it. Now, take any such $\eta$ and let $c \subset \eta$ be a connected component of $\eta \cap \textrm{interior}(Y)$ with one of its ends converging to $\gamma$ (either by converging to a point on $\gamma$ or by winding infinitely often around it). Let $\tilde{c}$ be a component of its lift in $\Hyp$ that meets the part of $G$ above line $h$. We have to show that $\tilde{c} \cap \Delta = \emptyset$. Since $c$ has no self-intersections there are only the following three cases. 

\emph{Case 1:} the other end of $c$ also converges to $\gamma$. Here we consider the geodesic symmetry $\sigma : \Hyp \to \Hyp$ with respect to the geodesic through $b$ and observe that $G \cup \sigma (G)$ is a lift of $Y$. Hence, one end of $\tilde{c} $ converges to $\tilde{\gamma}$ and the other to $\sigma(\tilde{\gamma})$. This implies that the part $\tilde{c} \cap G$ lies between $g_1$ and $g_2$ and cannot intersect $\Delta$.

\emph{Case 2:} the other end of $c$ converges to $\gamma_1$. In this case the other end of $\tilde{c}$ converges to $\tilde{\gamma}_1$. This implies that, speaking with Figure~\ref{Fig:ExplicitGap}, $\tilde{c}$ lies on the left hand side of $g_2$ while $\Delta$ lies on the right hand side.

\emph{Case 3:} the other end of $c$ converges to $\gamma_2$. By the same argument as before $\tilde{c}$ now lies on the right hand side of $f$ and cannot intersect $\Delta$ either. This completes the proof that $\Delta$ is forbidden.

For the existence and the size of $\Delta$ we need a number of estimates beginning with the length of side $b = B_0B_3$. To that end, we abbreviate the right hand side of Inequality \eqref{eq:lengthbound} by $\cosh \lambda$. The inequality in Lemma~\ref{lem:lengthbound} then becomes $\frac12 \ell(\gamma_i) < \lambda$, $(i=1,2)$. By {\bf formula \cite[2.4.1(i)]{BuserBook} }we have the following (where from now on we omit $\ell$ in the formulas):
 \begin{equation*}%
\cosh(\frac12 \gamma) = \cosh(b)\sinh(\tfrac12 \gamma_1)\sinh(\tfrac12 \gamma_2) - \cosh(\tfrac12 \gamma_1)\cosh(\tfrac12 \gamma_2).
 \end{equation*}%
Here the right hand side is $\geq 1$ and it increases when we replace $\frac{1}{2}\gamma_1$ and $\frac{1}{2}\gamma_2$ by $\lambda$. Using the identity $\cosh(b) \sinh^2(\lambda)-\cosh^2(\lambda)=2\sinh^2(\frac{1}{2}b) \sinh^2(\lambda)-1$, we get

 \begin{equation}\label{eq:boundb}%
\sinh(\tfrac12 b) \sinh \lambda \geq 1.
 \end{equation}%
The next estimate concerns the distance $\rho = \dist(b,h)$, where we note that $\rho + w_\gamma$ is the length of the common perpendicular of $b$ and $\tilde{\gamma}$. This perpendicular decomposes $G$ into two right-angled pentagons. Applying {\bf formula \cite[2.3.4(i)]{BuserBook}} to the one that has the bigger side on $\tilde{\gamma}$ we get $\sinh(\rho + w_\gamma) \cdot \sinh(\frac14 \gamma) \leq \cosh\lambda$. Using that $e^\rho \sinh(w_\gamma) \leq \sinh(\rho + w_\gamma)$ and applying the definition \eqref{eq:width} of $w_\gamma$ we get the following, where $\tau = \sinh(\frac12 \gamma)/\sinh(\frac14 \gamma)$ is a factor close to 2:
 \begin{equation}\label{eq:boundrho}%
e^\rho \leq \tau \cdot \cosh \lambda.
 \end{equation}%
We now assume that the labelling of the sides of $G$ has been set such that $\dist(B_1,B_3) > \frac12 b$. Let $M$ be the intersection point of $b$ and $f$, where $f = \omega_1 \omega_3$. Then $M$ is the midpoint of the ideal crossed geodesic quadrilateral $\omega_1 B_1 B_3 \omega_3$ and so we have an ideal right-angled triangle $\omega_1 B_1 M$ with side 

 \begin{equation}\label{eq:B1M}%
B_1M > \frac14 b.
 \end{equation}%
We estimate how far sides $g_1 = \omega_1 B_1$ and $\omega_1 M$ are apart from each other in the neighborhood of $h$. To this end, we take a point $p$ on $f$ and drop the perpendicular $pp_1$ to $g_1$, the position of $p$ being such that $\dist(B_1,p_1) = \rho + r$ with $0 \leq r \leq \frac15$. Applying {\bf formula \cite[2.2.2(iv)]{BuserBook}} to the ideal triangles $\omega_1 B_1 M$ and $\omega_1 p_1 p$ we get, using an obvious limit argument,
 \begin{equation}\label{eq:distp}%
\frac{\tanh(p_1p)}{\tanh(B_1M)}= e^{-(\rho+r)}.
 \end{equation}%
Finally, we provide a similar estimate for $g_2$ and $g_1$. Applying the pentagon {\bf formula \cite[2.3.4.(i)]{BuserBook}} to half of the ideal quadrilateral $\omega_1B_1B_2\omega_2$ we have
 \begin{equation*}\label{eq:distBB}%
\sinh(\tfrac12 B_1B_2)\cdot \sinh(\rho + w_\gamma) = 1.
 \end{equation*}%
Similarly to the preceding step we take a point $q$ on $g_2$ and drop the perpendicular $qq_1$ to $g_1$, the position of $q$ being such that $\dist(B_1,q_1) = \rho + r$ with $0 \leq r \leq \frac15$. By {\bf formula \cite[2.3.1(iv)]{BuserBook}} and using that $\tanh(2t) \leq 2 \sinh( t)$, for $t \geq 0$, we have $\tanh(q_1q) = \cosh(\rho + r)\cdot \tanh(B_1B_2) \leq 2 \cosh(\rho+r) \sinh(\frac12 B_1B_2)$. Hence,

\begin{equation}\label{eq:boundQQ}%
\tanh(q_1q) \leq 2 \frac{\cosh(\rho+r)}{\sinh(\rho+w_\gamma)}\leq 2\frac{\cosh(r)}{\sinh(w_\gamma)} = 2\cosh(r) \sinh(\tfrac12 \gamma).
 \end{equation}%

By \eqref{eq:boundQQ} on the one hand and by \eqref{eq:boundb}--\eqref{eq:distp} on the other, by approximating the small terms in \eqref{eq:boundb}--\eqref{eq:boundQQ} linearly and using that $r \leq \frac{1}{5}$ so that $\cosh(r) < 1.03$ and $e^{-r}> 0.818$, we get:
 \begin{equation*}%
\dist(q_1,q) < 1.1 \,\ell(\gamma), \quad \dist(p_1,p) > \frac{0.8}{(4\pi(g-1))^2}\,.
 \end{equation*}%

It follows that if $\ell(\gamma) \leq a_g$ with $a_g$ as in the statement of the theorem, then in the vicinity of line $h$ the points on $g_2$ are closer to $g_1$ than $1.1 \ell(\gamma)$ and the points on $f$ are further away than $3.2 \ell(\gamma)$. Hence the shaded domain $\Delta$ shows up and one now easily sees that it contains a disk of radius $a_g$.
\end{proof}%

\section{Local quantification in terms of the systole}\label{sec:locqua}

In this section $S$ is a compact hyperbolic surface of genus $g$. We aim to show the following result.

\begin{theorem}\label{thm:lq1} Assume $s = \min\{\frac{1}{2}\sys(S),\frac{1}{3} \}$. Then for any $\rho \leq s$ and any disk $B_{\rho}$ of radius $\rho$ in $S$ there exists a point $p \in B_{\rho}$ such that
\begin{equation*}
\dist(p,\BS) \geq \rho^2 e^{-M(g-1)},
\end{equation*}
where $M$ is a constant that depends only on $s$.
\end{theorem}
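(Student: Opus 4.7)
The strategy is to pass to the universal cover. Since $\rho \le s \le \tfrac12 \sys(S)$, the ball $B_\rho$ lifts isometrically to a hyperbolic disk $\tilde B \subset \Hyp$ of the same radius, and every connected component of $\BS \cap B_\rho$ is a geodesic chord of $B_\rho$ that lifts to a chord of $\tilde B$. The argument then splits into two steps: (a) bound the number $N$ of such chords in $\tilde B$ by a quantity of the form $N \le C(s)\,e^{M(g-1)}/\rho$; (b) extract from that count a ball of radius $\rho^2 e^{-M(g-1)}$ contained in $B_\rho \setminus \BS$.

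For (a), the systole assumption controls the global geometry of $S$. A standard disk-packing argument using $\sys(S) \ge 2s$ bounds the diameter of $S$ by some $D \le c_1(s)(g-1)$. I would then split the contributions to $\BS \cap B_\rho$ into two families:
\begin{enumerate}
\item simple closed geodesics, of which at most polynomially many have length $\le L$ on a surface of fixed topology, each contributing at most $L/s$ distinct chords through $B_\rho$ since consecutive returns cost at least $\sys(S)$ in length;
\item simple complete non-closed geodesics, which either spiral onto simple closed geodesics (and are then controlled by the collar estimate of Lemma~\ref{lem:collar}) or remain confined to an arc of length bounded in terms of $D$.
\end{enumerate}
Choosing the length cutoff $L$ proportional to $c_1(s)(g-1)$ and summing over both families gives a count of the desired form, the dependence on $s$ being absorbed into the constant $M$.

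For (b), I would run an area argument inside $\tilde B$. The hyperbolic area of $\tilde B$ is $4\pi \sinh^2(\rho/2)$, while the $\eps$-neighbourhood of a single chord of length at most $2\rho$ has area at most a universal constant times $\rho\,\eps$. Thus the $\eps$-neighbourhood of the union of all $N$ chords has area at most $O(N\rho\eps)$; choosing $\eps \asymp \rho^2/N$ makes this strictly smaller than the area of $\tilde B$, so some point of $\tilde B$ avoids the $\eps$-neighbourhood of every chord. Substituting the bound on $N$ from (a) yields $\eps \ge \rho^2 e^{-M(g-1)}$ after absorbing constants into $M$, and the projection of this point to $B_\rho$ is the required $p$.

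The principal obstacle is step (a): the set $\BS$ is uncountable and $\BS \cap B_\rho$ is not \emph{a priori} a finite union of arcs, so the naive count on chords must be replaced by an effective count of homotopy types of simple arcs in $S$ hitting $B_\rho$ together with a bound on how many geometric realisations each type can contribute. The hypothesis $\rho \le s$ is essential at both ends: it ensures the isometric embedding of $\tilde B$, and it forces a definite length cost between consecutive returns of any simple geodesic to $B_\rho$, without which the exponential bound on $N$ would not survive.
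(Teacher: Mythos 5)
The overall shape of your argument --- a polynomial count of ``types'' of chords multiplied against exponentially thin tubular neighbourhoods, closed off by an area comparison inside $B_\rho$ --- matches the paper, and your step (b) is essentially the paper's final area argument. But step (a) contains the entire difficulty of the theorem, and as written it does not go through. The set $\BS\cap B_\rho$ is in general an uncountable union of chords (think of a chord of a minimal lamination), so there is no finite number $N$ of chords to bound; your proposed dichotomy for non-closed simple geodesics (spiral onto a closed geodesic, or stay confined to an arc of bounded length) is also false, and no length cutoff $L$ depending only on $g$ and $s$ captures all simple complete geodesics meeting $B_\rho$. You correctly identify this as ``the principal obstacle'' in your closing paragraph, but identifying it is not the same as resolving it: the proposal supplies no mechanism for the ``effective count of homotopy types of simple arcs hitting $B_\rho$'' that you say is needed.

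The paper's resolution is worth absorbing because it is the heart of the proof. One introduces a free parameter $R$, extends each chord $c$ to a geodesic arc $\bar c$ of length about $2R$, and encodes $\bar c$ as a combinatorial simple path on the $1$-skeleton of a Voronoi decomposition of $S$ built from an $\varepsilon$-net (with $\varepsilon\le s$); the number of such cells and edges is controlled linearly in $g-1$. Two facts then compete: the number of combinatorially simple paths of length $L_R\asymp R/\varepsilon$ is at most $4L_R^2\binom{L_R+E}{L_R}$, hence polynomial in $R$ of degree $G\asymp (g-1)/\varepsilon^2$; and a hyperbolic-plane lemma (two geodesic arcs of length $2R$ with endpoints $2\varepsilon$-close are $O(e^{-R})$-close near their midpoints) shows each actual chord lies within $3e^{-R}$ of the geodesic ``model strand'' realizing its combinatorial type. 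Uncountably many chords thus collapse onto polynomially many model strands up to error $e^{-R}$, and optimizing $R$ via the equation $\frac{1}{\rho}\frac{m^G}{G^G}R^G=e^R$ produces the stated bound. Without some substitute for this encoding-plus-approximation step, your plan cannot be completed.
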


We shall get the explicit bound $M = \frac{194}{s^2} \log(\frac{134}{s})$. Observe that Theorem~\ref{thm:lq1} together with Theorem~\ref{thm:thinsurfaces} provides a computable lower bound on the minimum value of $C_S$. 

\subsection{A Voronoi cell decomposition and its properties}\label{sec:Voronoi}

We begin with a lemma that concerns \emph{$\varepsilon$-nets} on $S$, by which we mean a set of points of pairwise distance at least $ \varepsilon$ and maximal for this property with respect to inclusion. The maximality of the set implies that the open balls of radius $\varepsilon$ around the points {\emph cover} the surface. Throughout we shall restrict ourselves to $\varepsilon \leq \frac{s}{2}$, where $s$ is the systole of $S$.

\begin{lemma}\label{lem:netcard}
For fixed $0<\varepsilon\leq \frac{s}{2}$ there exists an $\varepsilon$-net on $S$ consisting of points $\{p_i\}_{i=1}^N$ with
 \begin{equation*}%
N\leq \frac{2}{\cosh \sfrac{ \varepsilon }{2}- 1} (g-1).
 \end{equation*}%
\end{lemma}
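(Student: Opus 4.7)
The plan is to use the standard packing argument: produce a maximal set of pairwise $\varepsilon$-separated points, observe that the open balls of half that radius around these points are embedded and pairwise disjoint, and then compare their total area against $\area(S) = 4\pi(g-1)$ given by Gauss--Bonnet.

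First I would fix existence of the net. Since $S$ is compact, a maximal set $\{p_1,\dots,p_N\}$ whose members are pairwise at distance $\geq \varepsilon$ exists (either by a greedy construction that terminates after finitely many steps, or by Zorn's lemma together with the fact that any infinite set in $S$ has an accumulation point). Maximality forces the open $\varepsilon$-balls to cover $S$, so this is indeed an $\varepsilon$-net in the sense defined just above the lemma.

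The heart of the argument is the packing step. Because the points are pairwise at distance $\geq \varepsilon$, the open balls $B(p_i,\varepsilon/2)$ are pairwise disjoint. Moreover, by hypothesis $\varepsilon \leq s/2$, and since $s \leq \sfrac12 \sys(S)$ the injectivity radius at every point of $S$ is at least $s \geq \varepsilon/2$, so each $B(p_i,\varepsilon/2)$ is isometric to a hyperbolic disk of radius $\varepsilon/2$. Its area is therefore
\begin{equation*}
\area\bigl(B(p_i,\varepsilon/2)\bigr) = 2\pi\bigl(\cosh(\varepsilon/2)-1\bigr).
\end{equation*}

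Summing over the disjoint embedded disks and comparing with $\area(S)=4\pi(g-1)$ (Gauss--Bonnet) gives
\begin{equation*}
N\cdot 2\pi\bigl(\cosh(\varepsilon/2)-1\bigr)\leq 4\pi(g-1),
\end{equation*}
which rearranges to the desired inequality. There is no real obstacle here; the only point that needs a moment's care is the embeddedness of the half-radius balls, which is where the assumption $\varepsilon\leq s/2$ is used, together with the fact that $s$ bounds the injectivity radius from below.
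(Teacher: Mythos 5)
Your proposal is correct and follows essentially the same argument as the paper: a greedy maximal $\varepsilon$-separated set, pairwise disjoint embedded balls of radius $\varepsilon/2$, and an area comparison with $\area(S)=4\pi(g-1)$. The only quibble is the phrasing of the injectivity-radius step (in this section $s$ denotes $\sys(S)$ itself, so the bound is simply that the injectivity radius is at least $s/2\geq\varepsilon\geq\varepsilon/2$), which does not affect the argument.
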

\begin{proof}
To construct such a set begin with an arbitrary point and add successively new points at least $\varepsilon$ away from the preceding ones until this is no longer possible. The open balls of radius $\varepsilon/2$ around the resulting points $p_i$ are pairwise disjoint, embedded and of area $2\pi (\cosh \sfrac{ \varepsilon }{2} -1)$. The total area of the surface is $4\pi(g-1)$ thus the number of balls cannot be greater than
 \begin{equation*}%
\frac{4\pi(g-1)}{2\pi (\cosh \sfrac{ \varepsilon }{2} -1)}
 \end{equation*}%
which proves the lemma.
\end{proof}

Given an $\varepsilon$-net one gets a cell decomposition of the surface given by the Voronoi cells associated to each point: each open cell $V_i$ is the set of points whose closest point in the net is $p_i$. The boundary of $V_i$ consist of points closest to more than one of the $p_k$s. By the maximality of the $\varepsilon$-net any boundary point of $V_i$ is closer to $p_i$ than $\varepsilon$. Since $\varepsilon \leq \frac{s}{2}$ it follows that $V_i$ is contained in an embedded disc. Thus, the $V_i$ are simple convex hyperbolic polygons. Since the vertices lie at distance $\leq \varepsilon$ from the center the sides of $V_i$ have lengths $\leq 2 \varepsilon$. 
 
\begin{remark}\label{rem:perturbation}
Note that up until now we haven't given any restrictions on how we choose the points. By standard perturbation techniques, if the points are chosen ``generically", there will be exactly three cells adjacent to any vertex. By this we mean that by using a standard measure on the choice of points (using the measure on the surface), the choices where all vertices are {\it not} adjacent to exactly three cells lie in a measure $0$ set. In all that follows, we shall suppose that this is the case. Hence, there will always be a triangulation dual to our Voronoi cell decompositions. 
\end{remark}

\begin{lemma}\label{lem:vornum} Each Voronoi cell has at most $v(\varepsilon)$ sides, where $v(\varepsilon)$ is the integer part of 
 \begin{equation*}%
\pi / \arccot\left( \cosh(\eps) \cdot\left \{ \sqrt{1 + 2 \cosh{\eps}} + \sqrt{2 + 2 \cosh{\eps}} \right\}\right).
 \end{equation*}%
\end{lemma}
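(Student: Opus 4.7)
The plan is to pass to the dual Delaunay triangulation and bound from below the angle at $p_i$ in each Delaunay triangle incident to it. By the generic perturbation assumption (Remark~\ref{rem:perturbation}), each Voronoi vertex $v$ is the circumcenter of a unique Delaunay triangle, and the $n$ sides of $V_i$ correspond bijectively to the Delaunay triangles having $p_i$ as a vertex. Their interior angles at $p_i$ tile the full angle around $p_i$ and so sum to $2\pi$; thus if every such angle is at least $\theta_{\min}$, then $n \leq 2\pi/\theta_{\min}$.

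For each such Delaunay triangle $p_ip_jp_k$, its three sides have length at least $\varepsilon$ (the net points are pairwise at distance $\geq\varepsilon$), and its circumradius $R = |vp_i|$ is at most $\varepsilon$ (by maximality of the net, every point of the surface lies within distance $\varepsilon$ of a net point, and $v$ is equidistant from the three net points $p_i, p_j, p_k$). I claim that, over all hyperbolic triangles satisfying these two constraints, the minimum of the angle at $p_i$ is attained at the isoceles configuration in which $|p_jp_k| = R = \varepsilon$, $|p_ip_j| = |p_ip_k|$, and $p_i$ lies antipodal through $v$ to the midpoint $m$ of $p_jp_k$ on the circumcircle. Monotonicity of the hyperbolic law of cosines in the opposite-side length forces $|p_jp_k| = \varepsilon$; the analogous monotonicity in the circumradius (seen via the isoceles half-triangle $vmp_j$) forces $R = \varepsilon$; fixing these, a symmetry swap comparing angular positions $\alpha$ and $2\pi-\alpha$ on the circumcircle shows that the antipodal position of $p_i$ minimizes the angle.

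In this extremal configuration, drop the altitude $h$ from $p_i$ to $p_jp_k$; by symmetry it passes through $v$, so $h = R + |vm|$. The right triangle $vmp_j$ (right angle at $m$, hypotenuse $R=\varepsilon$, leg $\varepsilon/2$) gives $\cosh|vm| = \cosh\varepsilon/\cosh(\varepsilon/2)$, and then the identity $\cosh^2\varepsilon - \cosh^2(\varepsilon/2) = \sinh^2(\varepsilon/2)(1+2\cosh\varepsilon)$ yields $\sinh|vm| = \tanh(\varepsilon/2)\sqrt{1+2\cosh\varepsilon}$. Expanding $\sinh h = \sinh R\cosh|vm| + \cosh R\sinh|vm|$ and applying the standard identity $\cot(\theta_{\min}/2) = \sinh h/\tanh(\varepsilon/2)$ coming from the right triangle $p_imp_j$, together with $2\cosh(\varepsilon/2) = \sqrt{2+2\cosh\varepsilon}$, gives
\[
\cot(\theta_{\min}/2) = \cosh\varepsilon\bigl(\sqrt{1+2\cosh\varepsilon} + \sqrt{2+2\cosh\varepsilon}\bigr).
\]
Combining with $n\theta_{\min} \leq 2\pi$ yields $n \leq \pi/\arccot(\cdot)$, which after taking the integer part is exactly $v(\varepsilon)$.

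The most delicate step will be proving the global extremality of the isoceles configuration: the three monotonicity facts (in $|p_jp_k|$, in $R$, and in the angular position of $p_i$) have to be combined carefully, and one must verify that no obtuse configuration in which the circumcenter $v$ lies outside the Delaunay triangle gives a smaller angle than the acute isoceles extremum computed above. All other steps are essentially bookkeeping with hyperbolic trigonometric identities from \cite{BuserBook}.
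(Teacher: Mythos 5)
Your overall route is the same as the paper's: pass to the dual triangulation, note that each dual triangle has sides $\geq \varepsilon$ and is inscribed in a circle of radius $\leq \varepsilon$ centered at the corresponding Voronoi vertex, bound each angle at $p_i$ from below by the angle of the extremal isosceles triangle with base $\varepsilon$ and circumradius $\varepsilon$, and divide $2\pi$ by that bound. Your closing trigonometric computation in the extremal configuration is correct and reproduces exactly the expression $\cot(\varphi_\varepsilon/2)=\cosh\varepsilon\,\bigl(\sqrt{1+2\cosh\varepsilon}+\sqrt{2+2\cosh\varepsilon}\bigr)$ that the paper obtains (the paper gets it via the auxiliary angle $\psi_\varepsilon$ of the equilateral triangle $OAB$; your direct computation through $\sinh h$ is an equivalent and equally clean way to finish).

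The genuine gap is the one you yourself flag: the global extremality of the isosceles configuration is asserted, not proven, and as sketched the reduction does not obviously chain together. Shrinking $|p_jp_k|$ to $\varepsilon$ with the adjacent sides fixed does decrease the angle at $p_i$, but it loses control of the circumradius (it can exceed $\varepsilon$ after this step), and then ``forcing $R=\varepsilon$'' by monotonicity runs in the wrong direction, since for fixed base the apex angle \emph{decreases} as the circumradius grows; moreover the ``symmetry swap'' comparing positions $\alpha$ and $2\pi-\alpha$ is not an argument, and the case where the circumcenter lies outside the triangle (signed sub-angles) is exactly where a naive inscribed-angle argument breaks. This missing step is precisely the content of the paper's Appendix A: Lemma~\ref{lem:Isosceles} proves, by an explicit formula $\cot\gamma(t)=\cot\tfrac{\zeta}{2}\cosh\rho-\tfrac12\frac{\sinh^2\rho}{\sin\frac{\zeta}{2}\cosh\rho}\bigl\{\cos(2t+\tfrac{\zeta}{2})+\cos\tfrac{\zeta}{2}\bigr\}$ valid with signed angles $\alpha,\beta$ (hence covering the obtuse case), that for fixed $A,B$ on a fixed circle the apex angle is minimized at the isosceles position; the proof of Lemma~\ref{lem:MinimalAngle} then performs the two monotone comparisons in the workable order -- first slide $A,B$ together along the \emph{same} circle until $|AB|=\varepsilon$, then grow the circumradius up to $\varepsilon$ keeping the base fixed -- noting that the intermediate comparison triangles need not satisfy the side-length constraint, since only the terminal triangle's angle is computed. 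To complete your proposal you would need to supply this lemma (or an equivalent argument) rather than the heuristic monotonicity claims; everything else in your write-up is correct bookkeeping.
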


\begin{proof}
Consider the triangulation dual to the Voronoi cell decomposition. Each side of a triangle has length at least $\varepsilon$. Any of the triangles is contained in a ball of radius $\varepsilon$ (around the intersection point of the three Voronoi edges dual to the triangle). The number of sides of a Voronoi cell is the number of dual triangles that meet in its center. We now apply Lemma~\ref{lem:MinimalAngle} from the appendix.
\end{proof}

From the two previous results we deduce a bound on the number of edges found in our Voronoi cell decomposition. We shall, however, assume from now on that $\varepsilon$ is small. A restriction that has proved to be practical is $\varepsilon \leq \frac{1}{3}$.
\begin{corollary}\label{cor:edges}
Assuming that $\varepsilon \leq \min\{\frac{s}{2},\frac{1}{3} \}$ we have $v(\varepsilon) = 12$, and there are at most 
 \begin{equation*}%
12\frac{g-1}{\cosh \sfrac{ \varepsilon }{2}- 1} < \left(\frac{97}{\varepsilon^2}-10\right)(g-1)
 \end{equation*}%
edges in a cell decomposition obtained as above.
\end{corollary}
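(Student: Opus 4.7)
The plan is direct: combine Lemma~\ref{lem:netcard} and Lemma~\ref{lem:vornum} via a standard double count, then verify one short elementary inequality.

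First, to see that $v(\varepsilon)=12$ on the range $\varepsilon \leq \frac{1}{3}$, I would note that the argument of $\arccot$ appearing in Lemma~\ref{lem:vornum}, namely $\cosh(\varepsilon)\bigl(\sqrt{1+2\cosh\varepsilon}+\sqrt{2+2\cosh\varepsilon}\bigr)$, is monotonically increasing in $\varepsilon$, and hence so is the unfloored real-valued expression defining $v(\varepsilon)$. It therefore suffices to check the two endpoints. In the limit $\varepsilon\to 0^+$ the quantity simplifies to $\sqrt{3}+2=\cot(\pi/12)$, giving $\pi/\arccot(\sqrt{3}+2)=12$ exactly, which is the reason the constant $12$ appears at all. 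At $\varepsilon=\frac{1}{3}$ a direct numerical evaluation gives approximately $4.00$ for the $\arccot$ argument and a value strictly between $12$ and $13$ for the unfloored expression, so the integer part remains $12$.

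Next I would apply the standard edge/cell double count. By Remark~\ref{rem:perturbation} we may assume the Voronoi decomposition is generic so every vertex is trivalent; each edge is then shared between exactly two Voronoi cells, and each cell has at most $12$ edges, hence the number $E$ of edges satisfies $E\leq\frac{12 N}{2}=6N$, where $N$ is the number of cells. Feeding in the bound $N\leq\frac{2(g-1)}{\cosh(\varepsilon/2)-1}$ from Lemma~\ref{lem:netcard} delivers the first claimed inequality $E\leq\frac{12(g-1)}{\cosh(\varepsilon/2)-1}$.

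The remaining task is the purely analytic inequality $\frac{12}{\cosh(\varepsilon/2)-1}<\frac{97}{\varepsilon^2}-10$ on $(0,\tfrac{1}{3}]$, and this is the only step requiring any care. The naive Taylor estimate $\cosh x-1\geq\frac{x^2}{2}$ yields only $\frac{96}{\varepsilon^2}$ on the left, which is insufficient once $\varepsilon>1/\sqrt{10}$; one must therefore retain the next term $\cosh x-1\geq\frac{x^2}{2}+\frac{x^4}{24}$ with $x=\varepsilon/2$, clear denominators, and reduce to a polynomial inequality in $\varepsilon^2$ that is straightforwardly checked on the compact interval. The binding case is $\varepsilon=\frac{1}{3}$, where the two sides evaluate to approximately $862$ and $863$; the constants $97$ and $-10$ in the corollary are precisely chosen to make this endpoint work.
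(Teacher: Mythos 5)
Your proposal is correct and follows essentially the same route as the paper: the monotonicity of $v(\varepsilon)$ with the endpoint values $v(0)=v(\tfrac{1}{3})=12$, and the double count (cells times maximal edges per cell bounds twice the number of edges) combined with Lemma~\ref{lem:netcard}. You additionally spell out the verification of the final numerical inequality $\tfrac{12}{\cosh(\varepsilon/2)-1}<\tfrac{97}{\varepsilon^2}-10$, which the paper leaves implicit; your observation that the crude bound $\cosh x-1\geq x^2/2$ is insufficient near $\varepsilon=\tfrac{1}{3}$ and that the next Taylor term is needed is accurate.
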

\begin{proof}
The number of cells times the maximum number of edges per cell is an upper bound on twice the number of edges. Now $v(\varepsilon)$ from Lemma~\ref{lem:vornum} is monotone increasing and $v(0) = v(\frac{1}{3}) = 12$. This proves the result.
\end{proof}

In everything that follows in this section we shall suppose that we have a fixed Voronoi cell decomposition $\V$ like the one we have just constructed with $\varepsilon \leq \min\{\frac{s}{2},\frac{1}{3} \}$.

\subsection{Scheme of proof}\label{sec:scheme}

In this subsection we describe the strategy of our proof without any computations in order to motivate the estimates we perform afterwards.

Given an embedded disk $D = B_{\rho}$ of radius $\rho>0$ on $S$, we look at the restriction of the Birman-Series set to $D$. This consists of a countable collection of geodesic segments between points on the boundary of $D$. 

To find quantifiable ``empty space'' between these segments we begin by introducing a constant $R>0$ that later will be adjusted. For each segment $c$ we look at a larger geodesic segment $\bar{c}$ obtained by extending $c$ outside of $D$. Specifically, we take the continuation of $c$ of length $R-2\eps$ in both directions (measured from the midpoint of $c$). This larger ``segment" is not necessarily embedded but as $c$ belongs to the Birman-Series set, it does not contain any transversal self-intersections. Either endpoint of $\bar{c}$ lies in some Voronoi cell of $\V$. (The two cells may coincide.)

To ``capture'' $\bar{c}$ we associate to it what we shall call a {\it model strand} $m^c$ the precise construction of which shall follow in the next subsection. Among its properties we have the following. The endpoints of $m^c$ lie on vertices of the $1$-skeleton of $\V$ and $m^c$ is freely homotopic to $\bar{c}$, where one allows endpoints to move within the start and terminal Voronoi cells of $\bar{c}$. The important part of $m^c$ is its middle intersection with $D$, i.e.,\ the connected component $m_c$ of $m^c \cap D$ that contains the midpoint of $m^c$. We call $m_c$ a \emph{model strand in} $D$. 

The model strands shall allow us to quantify the arguments in \cite{Birman-Series} by showing that the number of all $m_c$ grows polynomially with $R$ while the distance between $c$ and $m_c$ has an upper bound that decays exponentially with $R$. As a result, for large enough $R$ there are ``relatively few'' model strands in $D$ and each segment $c$ lies in an ``extremely small'' tubular neighborhood of some of these. Outside these tubular neighborhoods we shall then have empty space whose size can be estimated in terms of $R$.

The model strands will be obtained in two steps. First we associate to $\bar{c}$ a combinatorial path $P(\bar{c})$ on the $1$-skeleton of $\V$ that is simple in the combinatorial sense (see the next subsection ). The number of such paths can be estimated using combinatorial arguments. In the second step we then homotope the path $P(\bar{c})$ into a geodesic arc $m^c$ keeping the endpoints fixed. It may turn out that $m^c$ has self-intersections, but $m_c$ is simple. The distance between $c$ and $m_c$ is estimated in Section~\ref{sec:bounddist}.

\subsection{Constructing combinatorial arcs and their properties}\label{sec.concom}

Here we describe a method for associating to any simple arc such as $\bar{c}$ a combinatorial path that lies in the $1$-skeleton of $\V$.

Consider a finite oriented geodesic arc $a$ on $S$ which we assume to be simple in the sense that it has no transversal self-intersctions. We also assume that $a$ is longer than $4 \varepsilon$ (in our applications $a$ will be much larger). Then $a$ begins in some Voronoi cell $V_0$, traverses a sequence of cells $\{V_i\}_{i = 1}^n$ and ends up in some cell $V_{n+1}$ (each cell may appear multiple times). We break up $a$ into smaller arcs $a_i$ where for $i = 0, \dots, n+1$ each $a_i$ is a connected component of $a \cap V_i$ and $a_i$ is connected to $a_{i+1}$ ($i \leq n$). Intersections that consist of a vertex only are ignored, that is we list the cells such that each $a_i$ has positive length. For the combinatorial path we proceed as follows.

\begin{itemize} \item $a_0$ and $a_{n+1}$ do not contribute to our combinatorial path.
\item For $i = 1, \dots, n$, if $a_i$ intersects the interior of $V_i$ we associate to it the homotopic polygonal arc $P_i$ on the boundary $\partial V_i$ that has the same endpoints as $a_i$ and uses the minimal number of edges of $V_i$ (see Figure~\ref{fig:PolygonPath}). 
\item If $a_i$ is a side of $V_i$ we set $P_i$ to be $a_i$. 
\end{itemize}

We add a refinement to item 2: if $V_i$ has an even number of sides, say $m_i$, and if $a_i$ connects a pair of opposite sides of $V_i$, then there are two choices for the path $P_i$, either using $\frac{1}{2}m_i + 1$ edges of $V_i$. To remove this ambiguity we choose, for any Voronoi cell $V$, a ``separator point'' $p_V$ (not necessarily the geometric center of $V$) in the interior of $V \setminus a$ and then require that in the above cases $P_i$ is the path homotopic to $a_i$ in the punctured disk $V_i \setminus \{ p_{V_i} \}$. 

\begin{figure}[ht]
\leavevmode \SetLabels
\endSetLabels
\begin{center}
\AffixLabels{\centerline{\epsfig{file =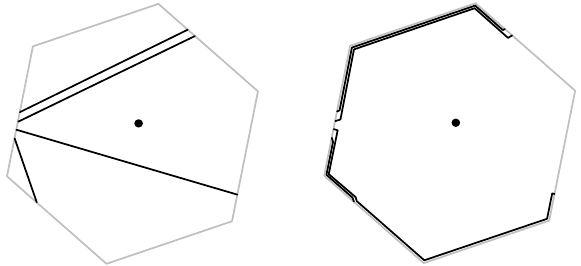,width=6cm,angle=0}}}
\vspace{-24pt}
\end{center}
\caption{\label{fig:PushPath}Pushing the path to the $1$-skeleton}
\end{figure}

The concatenation of these paths, $P_1 \cup \dots \cup P_n$, yields a connected path that lies entirely on the $1$-skeleton of $\V$, possibly with partial edges. We shall remove the latter by ``shrinking'' homotopies that take place on the edges of the $1$-skeleton as indicated in Figure~\ref{fig:PartialElim} so as to obtain a purely combinatorial connected sequence of edges $P(a)$ of the Voronoi polygons. 

\begin{figure}[ht]
\leavevmode \SetLabels
\endSetLabels
\begin{center}
\AffixLabels{\centerline{\epsfig{file =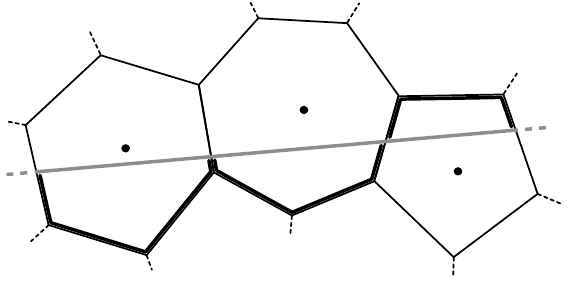,width=7.0cm,angle=0}}}
\vspace{-24pt}
\end{center}
\caption{\label{fig:PolygonPath}Obtaining a polygonal path}
\end{figure}

Thus, the construction is in two steps, the first step consisting in ``pushing the arcs to the boundary'' as indicated in Figure~\ref{fig:PushPath}, the second step being the shrinking away of the partial edges. 

The combinatorial path $P(a)$ obtained in this way is simple in that it does not contain any transversal self-intersections. To see this one my consider an arbitrarily thin tubular neighborhood $T$ around the $1$-skeleton and slightly modify the pushing and shrinking homotopies so as to obtain a genuine simple path $P'(a)$ in $T$ with the same combinatorics as $P(a)$.

\begin{figure}[ht]
\leavevmode \SetLabels
\endSetLabels
\begin{center}
\AffixLabels{\centerline{\epsfig{file =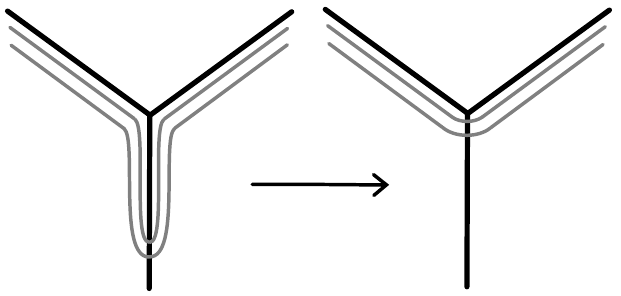,width=5.0cm,angle=0}}}\vspace{-24pt}
\end{center}
\caption{ \label{fig:PartialElim}Elimination of partial edges by shrinking}
\end{figure}

We define the \emph{combinatorial length} of $P(a)$ to be the number of edges it contains. The next lemma provides an upper bound of this length in terms of the original path length.
\begin{lemma}\label{lem:comblength}
A path $a$ as above of length $\ell$ passes through at most 
 \begin{equation*}%
\frac{4}{\varepsilon}(\ell + 3\varepsilon)
 \end{equation*}%
Voronoi cells. This multiplied by 6 is an upper bound for the combinatorial length of $P(a)$.
\end{lemma}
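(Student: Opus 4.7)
The plan is to pass to the universal cover $\Hyp$ and carry out an area-packing estimate. Lift $a$ to a geodesic segment $\tilde{a}$ of length $\ell$ and lift $\V$ to the associated locally finite tessellation of $\Hyp$ by convex polygonal cells. Since $\tilde{a}$ is a geodesic and each lifted cell is convex, $\tilde{a}$ meets each lifted cell in at most one connected subarc; hence the cells traversed in the lift are pairwise distinct, and their number equals the number of cells of $\V$ that $a$ passes through on $S$ counted with multiplicity.

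To bound this number, let $N$ denote the number of lifted cells visited and let $\tilde{p}_i$ denote their centers. By maximality of the $\varepsilon$-net, each Voronoi cell is contained in the closed ball of radius $\varepsilon$ about its center, so every $\tilde{p}_i$ lies within distance $\varepsilon$ of $\tilde{a}$. Because the centers are pairwise $\varepsilon$-apart, the open disks $B(\tilde{p}_i, \tfrac{\varepsilon}{2})$ are pairwise disjoint, and they all sit inside the tubular neighborhood $T$ of $\tilde{a}$ of radius $\tfrac{3\varepsilon}{2}$. The standard hyperbolic area formulas for a geodesic disk and for such a tube then give
\begin{equation*}
N \cdot 2\pi\bigl(\cosh(\tfrac{\varepsilon}{2})-1\bigr) \;\le\; \area(T) \;=\; 2\ell\,\sinh(\tfrac{3\varepsilon}{2}) + 2\pi\bigl(\cosh(\tfrac{3\varepsilon}{2})-1\bigr).
\end{equation*}
Rewriting the endcap contribution as an effective length addition and using monotonicity of the resulting trigonometric ratios on $\varepsilon \in (0, \tfrac{1}{3}]$ yields $N \leq \tfrac{4}{\varepsilon}(\ell + 3\varepsilon)$. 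For the second assertion, each $P_i$ consists, after the shrinking step, of the full edges of $\partial V_i$ taken along the shorter of the two arcs connecting the entry and exit sides; since $V_i$ has at most $v(\varepsilon) = 12$ sides by Corollary~\ref{cor:edges}, this contributes at most $6$ edges to $P(a)$, and summing over the visited cells gives the claimed combinatorial bound.

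The main obstacle is extracting the clean constant $4/\varepsilon$ from the area inequality: the leading-order ratio $\sinh(\tfrac{3\varepsilon}{2})/\bigl(\pi(\cosh(\tfrac{\varepsilon}{2})-1)\bigr)$ behaves like $12/(\pi \varepsilon) \approx 3.82/\varepsilon$, just below $4/\varepsilon$, so one must verify the precise trigonometric inequality uniformly on $(0, \tfrac{1}{3}]$, exploiting monotonicity in $\varepsilon$ and checking the value at the endpoint $\varepsilon = \tfrac{1}{3}$ to absorb the endcap term $2\pi(\cosh(\tfrac{3\varepsilon}{2})-1)$ cleanly into the additive $3\varepsilon$.
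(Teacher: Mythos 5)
Your proof is correct and follows essentially the same route as the paper: lift to the universal cover, note that the $\varepsilon/2$-balls around the centers of the traversed (pairwise distinct) lifted cells are disjoint and contained in the $\tfrac{3\varepsilon}{2}$-tube around $\tilde a$, compare areas, and then bound the per-cell contribution to $P(a)$ by half of $v(\varepsilon)=12$ edges. The numerical point you flag (the ratio $\sinh(\tfrac{3\varepsilon}{2})/\bigl(\pi(\cosh(\tfrac{\varepsilon}{2})-1)\bigr)$ staying below $4/\varepsilon$ on $(0,\tfrac13]$, with the endcap absorbed into the additive $3\varepsilon$) is exactly the "elementary simplification" the paper leaves implicit, and it checks out.
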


\begin{proof}
We argue in the universal cover by considering a lift $\tilde{a}$ of $a$. Along $\tilde{a}$ we have the sequence of lifts $\tilde{V}_1, \dots, \tilde{V}_{n}$ of the Voronoi cells $V_1, \dots, V_{n}$ with the property that $\tilde{V}_i \cap \tilde{a}$ is a lift of $a_i$, $i=1, \dots, n$ (recall that $a_0$ and $a_{n+1}$ are not taken into account). The interiors of the lifted cells are pairwise disjoint and so are the balls of radius $\sfrac{\varepsilon}{2}$ around their centers $q_1, \dots, q_{n}$. Furthermore, these balls lie in a tubular neighborhood of radius $\sfrac{3\varepsilon}{2}$ around $\tilde{a}$. We now apply an area argument. The total area of the $\sfrac{\varepsilon}{2}$-balls around the points $\{q_i\}_{i=1}^n$ is 
 \begin{equation*}%
n \cdot 2\pi (\cosh \sfrac{\varepsilon}{2}-1).
 \end{equation*}%
The area of the $\sfrac{3\varepsilon}{2}$ neighborhood of $\tilde{a}$ is
 \begin{equation*}%
2\left(\ell \sinh\sfrac{3\varepsilon}{2} + \pi(\cosh \sfrac{3\varepsilon}{2}-1)\right).
 \end{equation*}%
Area comparison and elementary simplification using that $\varepsilon \leq \frac{1}{3}$ now yields the first statement. The second statement follows from Lemma~\ref{lem:vornum} using that $v(\varepsilon) \leq v(\frac{1}{3}) =12$ and that for the crossing of any cell $V_i$ the combinatorial path $P(a)$ runs along at most half the edges of $V_i$.
\end{proof}

\begin{remark} Although this is not needed in what follows we indicate a reverse inequality giving a lower bound on the number of Voronoi cells a path $a$ of length $\ell$ as in the lemma necessarily traverses. To see this consider a maximal set of points $2\varepsilon$ apart on the covering $\tilde{a}$. There are at least $\sfrac{\ell}{2 \varepsilon}$ such points. As Voronoi cells have diameters less than $2 \varepsilon$ this means that $\tilde{a}$, respectively $a$ needs to traverse at least 
 \begin{equation*}%
\frac{\ell}{2\varepsilon}-1
 \end{equation*}%
Voronoi cells. There is also a lower bound on the combinatorial length $L$ of $P(a)$: using that the edges of a Voronoi cell are not longer than the diameter the triangle inequality yields $2 \varepsilon (L+2) \geq \ell$ and thus
 \begin{equation*}%
L \geq \frac{\ell}{2\varepsilon}-2.
 \end{equation*}%

\end{remark}

\subsection{Counting combinatorial paths}\label{S.count}

We need to bound the number of combinatorial paths in terms of the lengths of the model strands. Using the lemma above, this can be obtained via the combinatorial lengths. For the following we denote by $E$ the number of edges of our Voronoi decomposition $\V$.

\begin{lemma} \label{lem:combcount} There are at most $4L^2\binom{L+E}{L}$ combinatorial simple paths of length at most $L$.
\end{lemma}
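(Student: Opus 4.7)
The plan is to bound the count by separating it into (a) the number of multisets of edges a path can traverse, and (b) for a fixed multiset, the number of simple combinatorial arrangements realizing it. I expect these two factors to yield the binomial coefficient $\binom{L+E}{L}$ and the polynomial prefactor $4L^2$ respectively.

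For (a), a combinatorial path of length exactly $\ell$ uses a multiset of $\ell$ edges chosen with repetition from the $E$ edges of $\V$, giving $\binom{\ell+E-1}{\ell}$ multisets. Summing over $\ell=0,1,\ldots,L$ and invoking the hockey-stick identity,
\[
\sum_{\ell=0}^{L}\binom{\ell+E-1}{\ell}=\binom{L+E}{L},
\]
produces the binomial factor in the bound.

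For (b), I would argue that no more than $4L^2$ distinct simple combinatorial paths realize a fixed edge multiset. The essential input is the thickening picture from the previous subsection: a simple combinatorial path perturbs into a genuinely simple arc in an arbitrarily thin tubular neighborhood of the $1$-skeleton, so at every vertex $v$ the strands of the arc through the small disk around $v$ form a non-crossing chord pattern. Non-crossing rigidity implies that once we fix a starting endpoint together with the first directed edge out of it, the entire arrangement is forced: at every later vertex the outgoing half-edge is determined by the incoming half-edge and the multiplicities remaining in the multiset. A path of length $\leq L$ meets at most $L+1$ vertices, giving at most $2(L+1)$ choices of (starting vertex, orientation), and applying the same bound symmetrically at the terminal endpoint yields the crude overall bound $\bigl(2(L+1)\bigr)^2 \leq 4L^2$ (for $L\geq 2$; the cases $L\leq 1$ are trivial). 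Multiplying the two estimates gives the claimed $4L^2\binom{L+E}{L}$.

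The main obstacle is the rigidity claim in (b): the non-crossing chord structure at each vertex, together with the prescribed multiset and starting data, must really pin down the arrangement up to the two endpoint choices. This is the classical Birman--Series-style planarity input transplanted to the Voronoi skeleton; once it is in place the rest of the argument is the hockey-stick computation together with bookkeeping of the boundary data.
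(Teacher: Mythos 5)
Your overall architecture (count edge multiplicities to get $\binom{L+E}{L}$, then bound the number of simple paths per multiset by an $O(L^2)$ amount of endpoint data plus a non-crossing uniqueness statement) is the same as the paper's, and your hockey-stick computation for part (a) is a correct, mildly different route to the same binomial factor. But part (b) has a genuine gap: the rigidity statement you rely on is false as stated. Fixing the starting endpoint and the first directed edge does \emph{not} force the arrangement. For a concrete failure, let $a$ be an edge from $u$ to a trivalent vertex $v$ whose other two edges $f,g$ form a cycle (or bound a double edge back to a common vertex $w$), and take the multiset $\{a,f,g\}$: the path $a\cdot f\cdot g^{-1}$ and the path $a\cdot g\cdot f^{-1}$ are both simple, have the same multiset, the same starting vertex $u$ and the same first directed edge $a$, yet they differ. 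The local determinism (``the outgoing half-edge is determined by the incoming half-edge and the remaining multiplicities'') fails at $v$. This is exactly why the paper's proof pins down \emph{both} ends, and in a finer way than you do: it chooses which of the $\leq L$ placed segments is the initial one and which is the terminal one, together with which half of each is erased (at most $2L$ choices per end, hence $4L^2$), and only with the dangling ends so specified does the non-crossing pasting at the vertices (with same-edge pastings forbidden) become unique. Your hedge of ``applying the same bound symmetrically at the terminal endpoint'' is not backed by an argument that multiset plus both coarse endpoint data (vertex and incident edge) determines the path, and in any case that is not what your rigidity claim asserts.

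There is also a quantitative error in the bookkeeping: a vertex does not determine the first edge out of it (generically three edges meet at each vertex), so counting $(\text{starting vertex},\text{orientation})$ pairs does not enumerate the needed boundary data; and the inequality $\bigl(2(L+1)\bigr)^2\leq 4L^2$ is false for every $L$, since $4(L+1)^2>4L^2$. The clean count is the paper's: each end is specified by one of the $\leq L$ segments together with one of its two halves, giving at most $2L\cdot 2L=4L^2$ choices, which is the constant the lemma claims and which is later used in the estimate \eqref{eq:bdLGG}. So to repair your proof you should (i) replace the start-data determinism by the statement that, once the two half-erased end segments are fixed, the non-crossing, no-same-edge pasting at every vertex is unique if it exists, and (ii) count endpoint data by segments-and-halves rather than by vertices.
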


\begin{proof}
The model for the argument that follows is the following useful fact: the homotopy class of a (simple) multicurve lying on a triangulated surface is determined by its intersection numbers with each of the sides of the triangulation. The triangulation in our case is the triangulation dual to the Voronoi cells and the intersection numbers are exactly the numbers of times the path traverses a given edge. So if our path was closed it would be uniquely determined by the number of times it passes through each edge.

\begin{figure}[ht]
\leavevmode \SetLabels
\L(.47*1.02) $a=3$\\
\L(.32*.01) $b=5$\\
\L(.63*.01) $c=4$\\
\endSetLabels
\begin{center}
\AffixLabels{\centerline{\epsfig{file =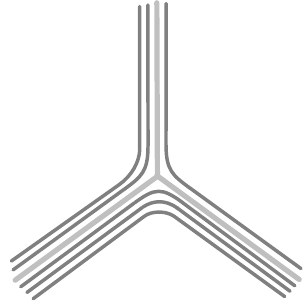,width=4.0cm,angle=0}}}\vspace{-24pt}
\end{center}
\caption{\label{fig:triangles}Example of how the numbers of parallel segments determine the combinatorial paths} 
\end{figure}

We now modify this for non closed paths. In order not to confuse path edges with the $E$ edges of the $1$-skeleton of the Voronoi decomposition, we shall call path edges \emph{segments}. 

We begin by distributing a number $\leq L$ of segments (later to be concatenated) among the $E$ edges. By elementary combinatorics this is possible in $\binom{L+E}{L}$ different ways. 
For ease of description we place them as distinct segments parallel to the corresponding edges in a thin tubular neighborhood of the $1$-skeleton of $\V$ as drawn in Figures~\ref{fig:triangles} and \ref{fig:trianglestwo}. 

Next we select, for any such distribution, a pair of segments that shall play the role of the end segments of the path to be constructed and erase half of each of the two segments. This is possible in at most $4L^2$ different ways. Figure~\ref{fig:trianglestwo} shows an example with half a segment erased.


\begin{figure}[ht]
\begin{center}
\AffixLabels{\centerline{\epsfig{file =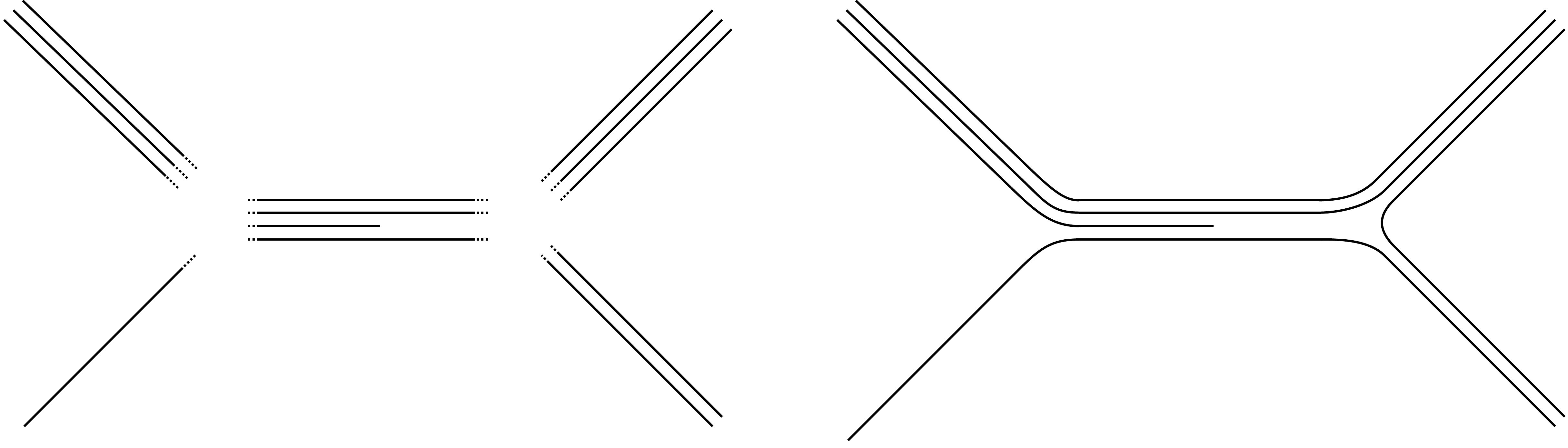,width=10.0cm,angle=0}}}
\vspace{-24pt}
\end{center}
\caption{\label{fig:trianglestwo}An example of how to construct the path with a given starting segment} 
\end{figure}

At any vertex of the $1$-skeleton there are now a number of incoming segments and there is at most one way to paste these together at the endpoints such that the resulting arcs do not intersect each other (pairs of segments on the same edge are not allowed to be pasted together). Of course, not every distribution allows one to paste all segments at all vertices, and even if the pasting is possible the result may be disconnected; but every simple path of length $\leq L$ may be obtained by some distribution and, hence, there are at most $4L^2 \binom{L+E}{L}$ such paths. 
\end{proof}

\subsection{Bounding the distance between arcs and model arcs}\label{sec:bounddist}
%

In what follows, we need to metrically compare the model arcs to the arcs they are intended to approximate. This is achieved via the following lemma about arcs in the hyperbolic plane.

\begin{lemma}\label{lem:width}
Fix $\delta>0$, $\rho>0$. In the hyperbolic plane, let $b,b'$ be two geodesic arcs of lengths $> 2(\rho + \delta)$ such that the two initial points and the two endpoints are at respective distances $\leq \delta$ from each other. Furthermore, consider a disk $D$ of radius $\rho$ centered at the midpoint of $b$.
Then $D\cap b$ lies in an $r$-neighborhood of $b'$, where

 \begin{equation*}%
r\leq \arcsinh\left( \frac{\cosh \rho \sinh \delta}{\cosh \frac{\ell(b)}{2}}\right).
 \end{equation*}%

\end{lemma}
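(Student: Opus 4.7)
My plan is to place the configuration in Fermi coordinates $(u,v)$ adapted to the geodesic line $\tilde b'\subset\Hyp$ extending $b'$, so that $\tilde b'=\{v=0\}$ and the signed perpendicular distance from a point to $\tilde b'$ equals its $v$-coordinate. I parametrize $b$ by arclength $s\in[0,\ell(b)]$ from $A$ to $B$, and let $v(s)$ denote the signed distance from $b(s)$ to $\tilde b'$. Since $A',B'\in\tilde b'$, the hypothesis $d(A,A'),d(B,B')\le\delta$ immediately yields $|v(0)|,|v(\ell(b))|\le\delta$, and the task reduces to bounding $|v(\ell(b)/2+t)|$ for $|t|\le\rho$.

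The key analytic input is the identity $(\sinh v)''(s)=\sinh v(s)$ along any geodesic in the hyperbolic plane, a consequence of the Jacobi equation in constant curvature $-1$ (or verified directly in the upper half-plane model). Writing $L=\ell(b)/2$ and $f(s)=\sinh v(s)$, this gives
\begin{equation*}
f(s)=\alpha\cosh(s-L)+\beta\sinh(s-L)
\end{equation*}
for constants $\alpha,\beta\in\mathbb{R}$, and the endpoint bounds $|f(0)|,|f(2L)|\le\sinh\delta$ translate into $|\alpha\cosh L\pm\beta\sinh L|\le\sinh\delta$. Thus the pair $(\alpha,\beta)$ is confined to the rhombus with vertices $(\pm\sinh\delta/\cosh L,\,0)$ and $(0,\,\pm\sinh\delta/\sinh L)$.

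For $p=b(L+t)$ with $|t|\le\rho$, the quantity of interest $|f(L+t)|=|\alpha\cosh t+\beta\sinh t|$ is a linear functional of $(\alpha,\beta)$ and hence attains its maximum at a vertex of the rhombus. From $\rho<L$ (guaranteed by $\ell(b)>2(\rho+\delta)$) one has $\sinh(L-|t|)\ge 0$, equivalently $\cosh t/\cosh L\ge\sinh|t|/\sinh L$, so the dominant vertex gives
\begin{equation*}
|f(L+t)|\le\frac{\sinh\delta\cdot\cosh t}{\cosh L}\le\frac{\sinh\delta\cdot\cosh\rho}{\cosh(\ell(b)/2)},
\end{equation*}
and inverting $\sinh$ yields the stated bound on $d(p,\tilde b')$.

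The remaining task is to replace $\tilde b'$ by the subarc $b'$: I need to verify that the foot of the perpendicular from each $p\in D\cap b$ to $\tilde b'$ actually lies inside $b'$, so that $d(p,b')=d(p,\tilde b')$. This uses that $|u(A)|\le\delta$ and $|u(B)-\ell(b')|\le\delta$, that orthogonal projection onto $\tilde b'$ is $1$-Lipschitz, and that $\ell(b'),\ell(b)>2(\rho+\delta)$. I expect this last step to be the main technical nuisance, because the $u$-coordinate along a geodesic in Fermi coordinates is not a priori monotone; however, the length hypotheses together with the smallness of $\delta$ are precisely what is needed to keep $u(L+t)$ safely inside $[0,\ell(b')]$ in the middle region that matters.
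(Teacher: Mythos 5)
Your main estimate is correct and follows a genuinely different route from the paper's. The paper argues synthetically: it asserts, by ``a moment's reflection'', that the worst case is the geodesic tangent to both $\delta$-disks on the side away from $b$, and then reads the bound off two trirectangles. You instead observe that $f=\sinh v$ satisfies $f''=f$ along a geodesic (true; in the hyperboloid model $f(s)=\langle \gamma(s),n\rangle$ with $\gamma''=\gamma$), so the admissible pairs $(\alpha,\beta)$ form a convex rhombus and the quantity to be bounded is a linear functional on it, maximized at the vertex $\beta=0$ --- which is exactly the paper's symmetric tangent configuration. The comparison $\cosh t/\cosh L\ge \sinh|t|/\sinh L$ for $|t|\le L$ is correct, and your bound coincides with the paper's. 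What your approach buys is a rigorous replacement for the paper's unproved extremal-configuration claim; it also yields $|v|\le\delta$ on all of $[0,\ell(b)]$ (since $f''=f$ forces $|f|$ to peak at the endpoints), which is useful for the remaining step.

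The loose end you flag at the end is real, and two things should be said about it. First, the paper shares it: its $d_{\max}$ is likewise a perpendicular distance to the complete geodesic carrying $b'$, and the foot-in-arc question is never addressed there. Second, the tools you name do not suffice by themselves: $1$-Lipschitzness of the projection gives only $u(L+t)\ge u(\ell(b))-(L-t)\ge \ell(b')-\delta-(L-t)\ge L+t-3\delta$, which can be negative when $\ell(b)$ is close to its permitted minimum $2(\rho+\delta)$, so an additive loss of order $\delta$ remains. A correct closing argument has to play the two endpoint conditions against each other: the foot of the perpendicular from $b(L+t)$ can fall at $u<0$ only if $b$ makes a sufficiently large angle with $\tilde b'$ (note $u$ is monotone along $b$, since two points of $b$ with the same foot would force $b$ to be a perpendicular to $\tilde b'$, impossible as $|v|\le\delta<\tfrac12\ell(b)$ at both ends, and $|u'|=\sqrt{1+\alpha^2-\beta^2}/\cosh^2 v$ is pinched between $\sqrt{1+\alpha^2-\beta^2}/\cosh^2\delta$ and $\sqrt{1+\alpha^2-\beta^2}$); but a large angle caps $u(\ell(b))-u(0)$ below what is needed for $B$ to come within $\delta$ of $B'$, whose $u$-coordinate is $\ell(b')>2(\rho+\delta)$. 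In the small-$\delta$ limit this trade-off closes exactly at $\tfrac12\ell(b)=\rho+\delta$, so the hypothesis is sharp and the step, while elementary, is a genuine computation rather than a formality. Either carry it out, or state the lemma (as the paper implicitly uses it) with the complete geodesic through $b'$ in place of the arc.
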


\begin{proof}
We search for an ``extremal" $b'$ that shall allow us to compute the constants appearing in a worst case scenario. For this we consider the two disks of radius $\delta$ surrounding the endpoints of $b$.

An extremal $b'$ must have its endpoints on the boundary of these disks and a moment's reflection shows that the worst case scenario is given by the two geodesics tangent to the boundary of these disks that do not cross $b$ (see Figure~\ref{fig:quad}).

\begin{figure}[ht]
\leavevmode \SetLabels
\L(.30*.397) $b$\\
\L(.30*.83) $b'$\\
\L(.55*.43) $\rho$\\
\L(.628*.41) $p$\\
\L(.157*.625) $\delta$\\
\L(.422*.60) $d(b,b')$\\
\L(.55*.605) $d_{\max}$\\
\L(.67*.545) $\ell(b)/2$\\
\L(.827*.625) $\delta$\\
\L(.60*.10) $D$\\
\endSetLabels
\begin{center}
\AffixLabels{\centerline{\epsfig{file =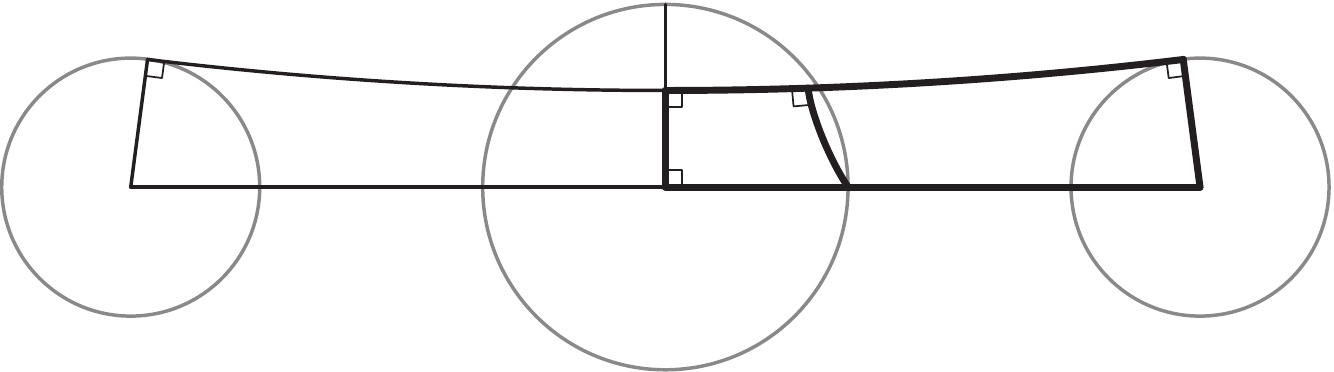,width=13.5cm,angle=0}}}
\vspace{-24pt}
\end{center}
\caption{\label{fig:quad}Comparing the two arcs}
\end{figure}

Let $b'$ be one of them. By symmetry, the distance path between $b$ and $b'$ reaches $b$ at its midpoint. One can now compute in the trirectangles as shown in the figure. The bigger trirectangle has sides of lengths $\delta$, $\frac{1}{2} \ell(b')$, $d(b,b')$, $\frac{1}{2} \ell(b)$. Using hyperbolic trigonometry {\bf formula \cite[2.3.1(v)]{BuserBook}}) we obtain
 \begin{equation*}%
\sinh \delta = \sinh d(b,b') \cosh\frac{\ell(b)}{2}.
 \end{equation*}%
%

%
The least upper bound $d_{\max}$ for the distances to $b'$ of points on $D \cap b$ is reached for the point $p$ where $b$ intersects the boundary of $D$. It remains to show that for $r := d_{\max}$ the inequality as in the lemma is satisfied. We actually have equality: $p$ is the vertex of the smaller trirectangle with sides $d_{\max}$, $\rho'$, $d(b,b')$, $\rho$; using the same formula as before we obtain
 \begin{equation*}%
\sinh d_{\max} = \sinh d(b,b')\cosh\rho
 \end{equation*}%

and putting the two formulas together we get 
 \begin{equation*}%
\sinh d_{\max} = \frac{\cosh \rho \sinh \delta}{\cosh \frac{\ell(b)}{2}}.
 \end{equation*}%

\end{proof}

\subsection{Estimates and finalizing the proof}\label{sec:finaliz} 

We first collect and simplify some of the earlier bounds.

{\it Number of edges.}\;
In view of Corollary~\ref{cor:edges}, we use the abbreviations
\begin{equation}\label{eq:defG}
G :=\frac{97(g-1)}{\varepsilon^2}, \quad G' := G-10.
\end{equation}
By the same corollary, $\V$ has at most $G'$ edges.

{\it Combinatorial length.}\; As described in Section~\ref{sec:scheme} any arc $c$ in the disk $D = B_{\rho}$ that occurs as a connected component of $D \cap \gamma$ for some simple complete geodesic $\gamma$ on $S$ is extended to a larger arc $\bar{c}$ on $\gamma$. The extension goes in both directions, starting from the midpoint of $c$, each half of the extension having length $R - 2\varepsilon$. To $\bar{c}$ we associate the combinatorial path $P(\bar{c})$ as described in Section~\ref{sec.concom} (with $\bar{c}$ in the role of $a$). The extension $\bar{c}$ has length $\ell = 2R-4\varepsilon$ and by Lemma~\ref{lem:comblength} the combinatorial length of $P(\bar{c})$ is bounded above by 
\begin{equation}\label{eq:defL}
L_R:=\frac{48}{\varepsilon}R.
\end{equation}

{\it The number of model strands.}\; By Lemma~\ref{lem:combcount} there are at most 
\begin{equation}\label{eq:bdLGG}
4L^2\binom{L+G'}{L} \leq \frac{4L^2 (L+G')^{G'}}{G'!} \leq \frac{4G^2 (L+G)^G}{G!}
\end{equation}
combinatorial paths $P(\bar{c})$ of combinatorial length $L$. For $L=L_R$ this is at the same time an upper bound for the number of model strands $m^c$, respectively the number of strands $m_c$ in $D$. \linebreak A heuristic check shows that the area argument that will follow further down can only succeed if 
 \begin{equation*}%
R > G. 
 \end{equation*}%
We shall therefore work from now on under this hypothesis. Using Stirling's formula and the fact that $G\geq \frac{97}{\varepsilon^2}$ we then get the following bound for the number of model strands which has been tailored in view of its later application:
\begin{equation}\label{eq:defNR}
N(R):= \frac{1}{10} \frac{m^G}{G^G}R^G \quad \text{with}\quad m = \frac{134}{\varepsilon}.
\end{equation}

{\it Distance between $c$ and $m_c$.}\; By construction the two initial, respectively endpoints of $\bar{c}$ and its associated model strand $m^c$ lie in the same Voronoi cells and their respective distances are smaller than $2 \varepsilon$. Furthermore, 
$\bar{c}$ has length $2R - 4\varepsilon$. By Lemma~\ref{lem:width}, $c$ lies in an $r$-neighborhood of $m_c$ where, by elementary simplification and using that $\rho \leq \varepsilon \leq \frac{1}{3}$,
\begin{equation}\label{eq:tubular}
r \leq \frac{\cosh \rho \sinh 2\varepsilon}{\cosh(R-2\varepsilon)} \leq 3 e^{-R} =: w_R.
\end{equation}

{\it Area argument.}\; This is the heart of the proof. For subsets $A \subset D$ and $t >0$ we shall denote by $A^t$ the part of the $t$-neighborhood of $A$ that lies in $D$. For each model strand $m_c$ in $D$ the set $m_c^{2w_R}$ has area
\begin{equation}\label{eq:areamcwR}
\area(m_c^{2w_R}) \leq 2 \ell(m_c)\sinh(2w_R)\leq 4\rho \sinh(2w_R) < 9\, \rho\, w_R.
\end{equation}
Now let $\mathscr{M}$ be the union of all model strands in $D$. Then $\mathscr{M}^{2w_R}$ has area less than $9 \rho\, w_R $ times the number of model strands in $D$, while $D$ has area $2\pi( \cosh(\rho)-1) > \pi \rho^2$. On the other hand $\BS$ is contained in $\mathscr{M}^{w_R}$. Hence, if we can determine $R$ such that the area bound for $\mathscr{M}^{2w_R}$ is smaller than $\pi \rho^2$, then $\mathscr{M}^{2w_R}$ does not cover $D$. By \eqref{eq:tubular} we therefore get a point $p \in D$ at distance $w_R$ from $\BS$ if we take $R$ to be a solution $>G$ to the equation
\begin{equation}\label{eq:theequation}
\frac{1}{\rho}\frac{m^G}{G^G}R^G = e^R.\end{equation}

{\it Estimating the solution and end of proof.}\; We first state the following lemma which shall also be used in the next section.
\begin{lemma}\label{lem:solutionbound}
Let $\alpha$, $\gamma$ be positive constants, $\alpha > \frac{e^\gamma}{\gamma^\gamma}$. Then the equation $\alpha t^\gamma = e^t$ for $t>0$ has two solutions $t_1 < \gamma$, $t_2 > \gamma$ and
\begin{equation*}
\log(\alpha \gamma^\gamma) < t_2 < 2\log(\alpha \gamma^\gamma).
\end{equation*}
\end{lemma}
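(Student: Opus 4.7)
The plan is to reduce the equation to the analysis of a single convex-like function. Taking logarithms, $\alpha t^\gamma = e^t$ is equivalent to $\phi(t) = \log\alpha$, where
\begin{equation*}
\phi(t) := t - \gamma \log t, \qquad t>0.
\end{equation*}
Since $\phi'(t) = 1 - \gamma/t$, the function $\phi$ is strictly decreasing on $(0,\gamma)$, strictly increasing on $(\gamma,\infty)$, attains its minimum at $t=\gamma$ with value $\phi(\gamma) = \gamma - \gamma\log\gamma$, and tends to $+\infty$ at both ends. The hypothesis $\alpha > e^\gamma/\gamma^\gamma$ is exactly the statement $\log\alpha > \phi(\gamma)$, so by the intermediate value theorem the equation $\phi(t)=\log\alpha$ has exactly two solutions $t_1 \in (0,\gamma)$ and $t_2 \in (\gamma,\infty)$. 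This settles the first assertion.

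For the bounds on $t_2$, set $\beta := \log(\alpha\gamma^\gamma)$, so that $\log\alpha = \beta - \gamma\log\gamma$ and the hypothesis reads $\beta > \gamma$. Since $\phi$ is strictly increasing on $[\gamma,\infty)$, it suffices to verify
\begin{equation*}
\phi(\beta) < \log\alpha < \phi(2\beta),
\end{equation*}
and then apply $\phi^{-1}$ on this branch.

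The lower bound is immediate: $\phi(\beta) - \log\alpha = \gamma(\log\gamma - \log\beta) < 0$ because $\beta > \gamma$. For the upper bound, compute
\begin{equation*}
\phi(2\beta) - \log\alpha = \beta - \gamma\log(2\beta/\gamma) = \gamma\bigl(k - \log k - \log 2\bigr),
\end{equation*}
where $k := \beta/\gamma > 1$. The function $\psi(k) := k - \log k$ is increasing for $k \geq 1$ with $\psi(1) = 1$, and since $1 > \log 2$ we conclude $\psi(k) > \log 2$ for all $k \geq 1$, hence $\phi(2\beta) > \log\alpha$.

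The only non-routine point is the elementary inequality $k - \log k > \log 2$ for $k \geq 1$, and even that is immediate from monotonicity and the value at $k=1$; the rest is bookkeeping around the substitution $\beta = \log(\alpha\gamma^\gamma)$.
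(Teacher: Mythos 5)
Your proof is correct and follows essentially the same route as the paper: both arguments reduce to monotonicity of the relevant one-variable function on the branch beyond the critical point $t=\gamma$ (the paper does this after the substitution $\tau = t/\gamma$, arriving at $e^\tau/\tau = \beta$ with $\beta = \gamma\alpha^{1/\gamma}$, while you take logarithms and work with $\phi(t) = t - \gamma\log t$), and both verify the bounds by testing the same two points $\log(\alpha\gamma^\gamma)$ and $2\log(\alpha\gamma^\gamma)$. The only difference is cosmetic, so nothing further is needed.
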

\begin{proof}
The equation is equivalent to $\gamma \alpha^{\frac{1}{\gamma}} \frac{t}{\gamma} = e^{\frac{t}{\gamma}}$. Substituting $\tau = \frac{t}{\gamma}$ we transform it into
\begin{equation}\label{eq:prooflemma}
 \frac{e^\tau}{\tau}=\beta
\end{equation}
for $\tau>0$ with $\beta=\gamma \alpha^{\frac{1}{\gamma}}$. By the hypothesis on $\alpha$ we have $\beta >e$ and so \eqref{eq:prooflemma} has two solutions $\tau_1 < 1$ and $\tau_2 > 1$. For $\tau' = \log{\beta}$ we have $\tau' > 1$ and $\frac{e^{\tau'}}{\tau'}<\beta$. For $\tau''=2\log{\beta}$ we have $\tau'' < \beta$ and $\frac{e^{\tau''}}{\tau''}>\beta$. Hence, $\log{\beta} < \tau_2 < 2\log{\beta}$. Substituting back we get the claims of the lemma.
\end{proof}
In the case of equation \eqref{eq:theequation} we have $\gamma = G$ and $\alpha = \frac{1}{\rho} \frac{m^G}{G^G}$. By Lemma~\ref{lem:solutionbound} the larger of the two solutions to the equation has the bound $R \leq 2 \log(\frac{1}{\rho}) + 2G \log(m)$. We thus get the lower bound on the maximal distance to $\BS$ in $D$: 
\begin{equation}\label{eq.final bound}
w_R \geq 3\rho^2 e^{-2G \log{m}}.
\end{equation}
For $\varepsilon = s$ this yields the bound as stated in Theorem~\ref{thm:lq1}.

\section{Surfaces with cusps and (very) small geodesics}\label{sec:smallcusps}

In this section we generalize the local quantification results of the previous sections to include surfaces with cusps and small geodesics. The process will be almost identical to previously with just a number of necessary changes that will be detailed. Again $\varepsilon$ is a fixed constant, $0 < \varepsilon \leq \frac{1}{3}$.

\subsection{Thick and thin decomposition}\label{sec:thickthin} We consider this time a complete orientable finite area hyperbolic surface $S$ of genus $g$ with $n \geq 0$ cusps. A closed geodesic on $S$ shall be called \emph{small} if its length is $\leq 2\varepsilon$. We let $\beta_1, \dots, \beta_h$ be the list of all small geodesics of $S$ arranged with decreasing lengths
 \begin{equation*}%
2\varepsilon \geq \ell(\beta_1) \geq \ell(\beta_2) \geq \dots \geq \ell(\beta_h).
 \end{equation*}%
The list may be void, but we shall assume that $n+h > 0$ to contrast this section from the preceding ones. We make use of the \emph{collar theorems} as e.g. Theorems 4.1.1, 4.1.6 and 4.4.6 in \cite{BuserBook} of which we recall the following.

For the cusps we have the \emph{cusp neighborhoods} $\mathscr{P}_i$, $i=1,\dots,n$, filled by the horocycles of lengths $< 2$. Each $\beta_k$ is simple and has a \emph{collar neighborhood} $\C_k$, $k=1,\dots,h$, filled by the points at distance $< w(\beta_k)$ from $\beta_k$, where $\sinh(w(\beta_k)) \sinh(\frac{1}{2}\ell(\beta_k)) = 1$. The two boundary curves of $\C_k$ are parallel curves to $\beta_k$ (i.e. all points have the same distance o $\beta_k$) and their lengths satisfy $\ell(\beta_k) \cdot \cosh(w(\beta_k)) > 2$. Topologically the $\mathscr{P}_i$ are punctured discs, the $\C_k$ are annuli and all these neighborhoods are pairwise disjoint. Finally we note that
 \begin{equation}\label{eq:hleq3gn}%
h \leq 3g -3 + n.
 \end{equation}%

\begin{figure}[ht]
\leavevmode \SetLabels
(.51*.13) $\beta_k$\\
\L(0.883*.96) $p'$\\
\L(0.809*.40) $p''$\\
\L(0.887*.0) $p'''$\\
\L(0.899*.49) $p$\\
(0.70*.80) $\omega_k$\\
(0.70*.495) $\omega'_k$\\
(0.881*.71) $\varepsilon$\\
(0.884*.26) $\varepsilon$\\
(0.13*.26) $\varepsilon$\\
(0.131*.71) $\varepsilon$\\
(0.1855*.29) $\varepsilon$\\
(0.185*.71) $\varepsilon$\\
\endSetLabels
\begin{center}
\AffixLabels{\centerline{\epsfig{file =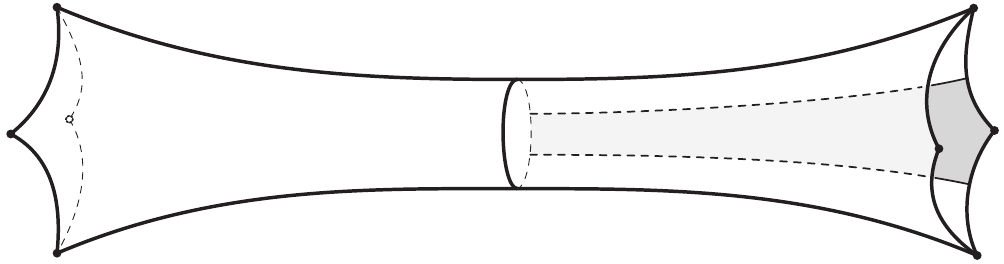,width=12.0cm,angle=0}}}
\vspace{-24pt}
\end{center}
\caption{\label{fig:ThinHandle}Reduced collar $\C'_k$ with part of the Voronoi cell at vertex $p$}\end{figure}

For $k = 1, \dots, h$ we now choose $\omega_k < w(\beta_k)$ in such a way that on either side of $\beta_k$ the parallel curve at distance $\omega_k$ admits a quadruple of points at consecutive distances $\varepsilon$ as the points $p, p', p'', p'''$ shown in Figure~\ref{fig:ThinHandle}. The value of $\omega_k$ is given by%
 \begin{equation*}%
\sinh(\sfrac{ \varepsilon }{2}) = \sinh(\frac{1}{8}\ell(\beta_k)) \cosh(\omega_k), 
 \end{equation*}%
which is {\bf formula \cite [2.3.1(v)]{BuserBook}} applied to the trirectangle with acute angle at $p'$ and consecutive sides of lengths $\frac{\varepsilon}{2}$, $\omega_k$, $\frac{1}{8} \ell(\beta_k)$, $\omega'_k$, where $\omega'_k$ is the distance between $\beta_k$ and the segment from $p$ to $p'$. From the formulas for $w(\beta_k)$ and $\omega_k$ we deduce by elementary computation that indeed $\omega_k < w(\beta_k)$. More accurately (using a numerical check for the upper bound),%
 \begin{equation}\label{eq:lowboundwk}%
 \arccosh(2) < \omega_k < w(\beta_k) - \frac{1}{3}.
 \end{equation}%

The same formula also yields the injectivity radius $r_p$ at $p$ (and $p'$, etc.). Indeed, the shortest geodesic loop at $p$ together with $\beta_k$ forms two identical trirectangles with consecutive sides $\omega_k, \frac{1}{2}\ell(\beta_k), \cdot \,, r_p$ and we have $\sinh(r_p) = \sinh(\frac{1}{2}\ell(\beta_k)) \cosh(\omega_k)$. Bringing the two formulas together we obtain
 \begin{equation*}%
\sinh(r_p) = \frac{\sinh(\frac{1}{2}\ell(\beta_k))}{\sinh(\frac{1}{8}\ell(\beta_k))}\sinh\big(\sfrac{ \varepsilon }{2}\big) > 4 \sinh\big(\sfrac{ \varepsilon }{2}\big).
 \end{equation*}%
Since $\varepsilon \leq \frac{1}{3}$ we have $r_p > 1.8 \varepsilon$. 

The geodesic segments of length $\varepsilon$ that connect the points $p,p',p'',p''',p$ form a simple curve homotopic to $\beta_k$ and there is another such curve on the other side of $\beta_k$. We choose it in such a way that the subset $\C'_k$ of $\C_k$ that lies between these two curves is symmetric with respect to $\beta_k$ as shown in Figure~\ref{fig:ThinHandle}. We call $\C'_k$ the \emph{reduced collar}.

In a similar manner we define the reduced cusp neighborhoods $\mathscr{P}'_i \subset \mathscr{P}_i$, $i=1,\dots,n$. For each $\mathscr{P}_i$ the vertices $p, p', p'', p'''$ on the boundary of $\mathscr{P}'_i $ lie on a horocycle. Since cusps may be viewed as limits of half collars we may apply the previous estimates taking the limit for $\ell(\beta_k) \to 0$. In particular the injectivity radius at the points $p$, $p'$, etc. has again the lower bound $r_p > 1.8 \varepsilon$. We call the union of the reduced cusps and collars the thin part of $S$ and the complement
 \begin{equation*}%
S' = S \setminus \big( \C'_1 \cup \dots \cup \C'_h \cup \mathscr{P}'_1 \cup \dots \cup \mathscr{P}'_n\big)
 \end{equation*}%
the thick part. By the collar theorems and the lower bound on $r_p$ the injectivity radius at any point in $S'$ is larger than $\varepsilon$.

The main result is the following. 

\begin{theorem}\label{thm:lq2} Let $S$ be as described and set 
 \begin{equation*}%
\sigma = \ell(\beta_1)\cdots \ell(\beta_h)
 \end{equation*}%
if $h\geq 1$ and $\sigma = 1$ if $h=0$. Then for any $\rho \leq \varepsilon$ and any disk $B_{\rho}$ of radius $\rho$ in the thick part of $S$ there exists a point $p \in B_{\rho}$ such that
\begin{equation*}
\dist(p,\BS) \geq \rho^2 \sigma^2 e^{-M(2g-2+n)},
\end{equation*}
where $M$ is a constant that depends only on $\varepsilon$.
\end{theorem}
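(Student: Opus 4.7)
The plan is to follow the framework of Theorem~\ref{thm:lq1} almost verbatim, adapting the three main ingredients (cell decomposition, counting of combinatorial paths, and the width estimate) to the thin/thick decomposition of Section~\ref{sec:thickthin}. First I would build a cell decomposition of $S$ by taking an $\varepsilon$-net on the thick part $S'$ together with the boundary vertices of the reduced collars $\C'_k$ and the reduced cusp neighborhoods $\mathscr{P}'_i$ constructed there, treating each $\C'_k$ and $\mathscr{P}'_i$ as a single (topologically non-trivial) cell. Since $\area(S) = 2\pi(2g-2+n)$ by Gauss--Bonnet, the analogue of Corollary~\ref{cor:edges} gives an edge count of the form $G = O((2g-2+n)/\varepsilon^2)$, with a bounded extra contribution from the thin boundaries.

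Next, for an arc $c \subset B_{\rho} \cap \gamma$ of a simple complete geodesic $\gamma$, I extend $c$ to $\bar c$ of length $2R-4\varepsilon$ and build the combinatorial path $P(\bar c)$ as in Section~\ref{sec.concom} on the portions lying in thick cells. On a portion inside a reduced collar $\C'_k$, Lemma~\ref{lem:collar} implies that $\bar c$ either crosses $\beta_k$ transversely or spirals into it, and in either case the topology of the crossing is determined by an entry/exit pair together with a winding number around $\beta_k$. Simplicity of $\bar c$ and the bound on its length force this winding number to lie in an interval of size $O(1/\ell(\beta_k))$, so including the winding as a combinatorial choice multiplies the bound of Lemma~\ref{lem:combcount} by $\prod_k O(1/\ell(\beta_k)) = O(1/\sigma)$.

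The width estimate must be revisited as well. Cell-by-cell in the thick part, Lemma~\ref{lem:width} gives the same exponential bound between $\bar c$ and its model strand $m^c$. In a collar, however, two simple arcs that enter and exit at points at distance $\le 2\varepsilon$ with the same winding number can still diverge by an amount controlled by the collar width; a trirectangle computation in the style of Section~\ref{sec:thickthin} shows this divergence is at most $O(1/\ell(\beta_k))$ per collar, contributing an additional factor $1/\sigma$ to the effective tubular neighborhood radius. Rerunning the area argument of Section~\ref{sec:finaliz} with these adjusted constants yields an equation of the same shape as \eqref{eq:theequation} but with an extra factor $\sigma^{-2}$ on the left-hand side; solving via Lemma~\ref{lem:solutionbound} then produces the claimed lower bound $\rho^2 \sigma^2 e^{-M(2g-2+n)}$.

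The main obstacle will be the analysis of simple arcs in collars. One must verify precisely that the number of simple homotopy classes of a single collar crossing is $O(1/\ell(\beta_k))$ and not something worse (such as an exponential in the arc length), and also show that within any fixed class the geodesic and its model arc remain within distance $O(e^{-R}/\ell(\beta_k))$ in the thin geometry. Getting these two factors to conspire to the $\sigma^2$ in the statement, as opposed to a single factor of $\sigma$ or $\sigma^3$, requires delicate bookkeeping in the pentagon and trirectangle formulas of \cite{BuserBook} applied inside the collars.
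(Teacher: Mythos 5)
Your overall strategy --- thick--thin decomposition, a winding datum for each collar crossing, an extra factor $1/\sigma$ in the strand count, then the area argument --- is indeed the paper's strategy in outline, but two of your quantitative claims are wrong, and the second one breaks exactly the bookkeeping you flag at the end as delicate.

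First, the number of admissible winding numbers for a traversing arc of $\bar{c}$ in the collar around $\beta_k$ is not $O(1/\ell(\beta_k))$ but $O(R/\ell(\beta_k))$: each turn around $\beta_k$ costs length about $\ell(\beta_k)$, so an arc of length $2R-4\varepsilon$ can wind up to roughly $2R/\ell(\beta_k)$ times, and simplicity does not improve this. The paper therefore does not count windings collar by collar and multiply; it proves the joint length budget $\frac{\varepsilon}{24}\breve{L}+\sum_k\ell(\beta_k)\vert\tau_k\vert\le\ell(\bar{c})+3\varepsilon$ (Lemma~\ref{lem:edgetwist}) and counts the whole string $(n_1,\dots,n_V,\vert\tau_1\vert,\dots,\vert\tau_h\vert)$ at once via Corollary~\ref{cor:comblem}; that is where the single factor $1/(\ell(\beta_1)\cdots\ell(\beta_h))=1/\sigma$ in Lemma~\ref{lem:numbermc} comes from. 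You also need to handle \emph{terminal} arcs, i.e.\ extensions $\bar{c}$ that end inside a collar or cusp region while spiraling toward $\beta_k$: these have an entry point but no exit and no well-defined winding number, and the paper attaches $O(R/\ell(\beta_k))$ candidate terminal segments at the entry point (Lemma~\ref{lem:ModelEnd}); your entry/exit description does not cover this case.

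Second, and more seriously, the extra factor $1/\ell(\beta_k)$ per collar that you insert into the width estimate is spurious. Once the model strand is built with the correct winding (undone and redone by Dehn twists) and the correct terminal segment, its endpoints lie within $2\varepsilon$ of those of $\bar{c}'$ and the two arcs are homotopic rel those balls; lifting to $\Hyp$, Lemma~\ref{lem:width} then applies verbatim and gives $w_R=3e^{-R}$ with no dependence on $\sigma$ (Lemma~\ref{lem:distcmc}) --- the comparison is only needed inside $B_\rho$, which sits in the thick part, and the collar geometry in between plays no role for two geodesic lifts with $2\varepsilon$-close endpoints. With your accounting (count $\times\,1/\sigma$ and tubular radius $\times\,1/\sigma$) the analogue of \eqref{eq:theequation} acquires $\sigma^{-2}$ on the left while the guaranteed gap is only $w_R/\sigma$, and Lemma~\ref{lem:solutionbound} then yields a bound of order $\rho^2\sigma^3$, which (since $\sigma\le 1$) is strictly weaker than the claimed $\rho^2\sigma^2$. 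In the paper the $\sigma^2$ has a different origin: the count contributes exactly one factor $1/\sigma$, the equation becomes \eqref{eq:theequation} with $\rho$ replaced by $\rho\sigma$, and the factor $2$ in the solution bound $R\le 2\log\frac{1}{\rho\sigma}+2G\log m$ of Lemma~\ref{lem:solutionbound} is what squares $\rho\sigma$.
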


The proof will be finalized in section~\ref{sec:finaliz2}, where we shall get the explicit bound $M = \frac{97}{\varepsilon^2} \log(\frac{134}{\varepsilon})$.

\subsection{An $\varepsilon$-net in the thick part and Voronoi cells in $S$}\label{sec:epsnetthick}

We construct an $\varepsilon$-net in $S'$ beginning with the above vertices $p$, $p', \dots$, on the boundaries of the reduced collars and cusp neighborhoods. We call these points the ``special points''. By the estimates in the preceding section and because collars and cusps are pairwise disjoint the special points have pairwise distances $\geq \varepsilon$. We now complete them into an $\varepsilon$-net on $S'$ by successively adding additional ``ordinary points'' on $S'$ at pairwise distances $\geq \varepsilon$ until this is no longer possible. For the properties of the resulting Voronoi cells we first prove the following. 
 \begin{lemma}\label{lem:specialpoints}%
For $x \in S \setminus S'$ the distance to the special points of the $\varepsilon$-net is smaller than the distance to the ordinary points.
 \end{lemma}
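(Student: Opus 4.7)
The plan is to fix an arbitrary ordinary net point $q \in S'$ and to produce, for each $x \in S \setminus S'$, a special net point $z$ satisfying $\dist(x, z) \leq \dist(x, q)$, with strict inequality outside a non-generic configuration. The only ingredients needed are the $\varepsilon$-net property of the net together with the explicit description of the boundary of the reduced collars $\C'_k$ and reduced cusp neighborhoods $\mathscr{P}'_i$ given in Section~\ref{sec:thickthin}.

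First I would consider a minimizing geodesic $\gamma$ from $x$ to $q$. Because $x$ lies in the interior of some $\C'_k$ or $\mathscr{P}'_i$ while $q$ lies in the thick part $S'$, the geodesic $\gamma$ must cross the boundary of the thin component, and I take $y$ to be the first such crossing. By construction, this boundary is a piecewise geodesic curve whose edges are geodesic segments of length exactly $\varepsilon$ joining two consecutive special points. Let $z_1, z_2$ be the two special points at the ends of the edge containing $y$; since $y$ lies on that geodesic segment, $\dist(y, z_1) + \dist(y, z_2) = \varepsilon$, so the closer one — call it $z$ — satisfies $\dist(y, z) \leq \varepsilon/2$.

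Next I would invoke the $\varepsilon$-net property. Since $q$ is a net point distinct from $z_1$ and $z_2$, we have $\dist(q, z_i) \geq \varepsilon$ for $i=1, 2$, and in particular $\dist(q, z) \geq \varepsilon$. The triangle inequality then gives $\dist(y, q) \geq \dist(q, z) - \dist(y, z) \geq \varepsilon - \dist(y, z)$. Combining this with $\dist(x, z) \leq \dist(x, y) + \dist(y, z)$ and the identity $\dist(x, q) = \dist(x, y) + \dist(y, q)$ (which holds because $y$ lies on the minimizing geodesic $\gamma$) yields
\begin{equation*}
\dist(x, q) - \dist(x, z) \;\geq\; \varepsilon - 2\dist(y, z) \;\geq\; 0.
\end{equation*}

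The delicate point will be upgrading this weak inequality to a strict one. Equality throughout forces simultaneously that $\dist(y, z) = \varepsilon/2$ (so $y$ is the midpoint of its boundary edge), that $\dist(q, z) = \varepsilon$ (so the $\varepsilon$-net bound is saturated), and that $y$ lies on the geodesic from $z$ to $q$. This configuration is non-generic and is ruled out by the same perturbation convention that underlies Remark~\ref{rem:perturbation}, in which the ordinary points of the net are chosen in general position. The only remaining case to verify is when $y$ coincides with a vertex of the boundary polygon, but then $y$ itself is a special point and $\dist(x, y) \leq \dist(x, q)$ is immediate.
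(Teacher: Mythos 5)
Your argument follows the same route as the paper's proof: take a minimizing geodesic $\gamma$ from $x$ to the ordinary point $q$, locate its crossing $y$ with a boundary edge of the reduced collar or cusp neighbourhood, pass to the nearer endpoint $z$ of that edge (a special point with $\dist(y,z)\le \sfrac{\varepsilon}{2}$), and combine the net separation $\dist(q,z)\ge\varepsilon$ with the triangle inequality to obtain $\dist(x,q)-\dist(x,z)\ge\varepsilon-2\dist(y,z)\ge 0$. Up to this weak inequality the computation is correct and is exactly the paper's.

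The justification of strictness, which you yourself flag as the delicate point, does not work as stated. Remark~\ref{rem:perturbation} concerns a different issue (trivalence of Voronoi vertices), and, more importantly, the special points cannot be perturbed at all: they are pinned down by the construction of Section~\ref{sec:thickthin} (the quadruples $p,p',p'',p'''$ at prescribed distance $\omega_k$ from $\beta_k$), so ``general position'' is not available for them, and the lemma must hold for every admissible net and every $x$ in the thin part, not generically. Fortunately no genericity is needed, because the equality configuration is geometrically impossible: equality throughout your chain forces $y$ to be an interior point of minimizing geodesics joining $x$ to $q$, $x$ to $z$, and $q$ to $z$ simultaneously, so that at $y$ the initial directions toward $x$, $q$ and $z$ would have to make pairwise angles $\pi$, which cannot happen. (Equivalently, as in the paper: if $y$ is not the midpoint of its edge then $\dist(y,z)<\sfrac{\varepsilon}{2}$; if it is the midpoint, then $\dist(x,z)=\dist(x,y)+\dist(y,z)$ would force the half-edge $yz$ to be the smooth continuation of $\gamma$ past $y$, putting $z$ on $\gamma$ strictly between $x$ and $q$ and giving $\dist(x,z)<\dist(x,q)$ outright.) With that replacement for your final paragraph the proof is complete; your treatment of the case where $y$ is a vertex is fine, noting that there $\dist(x,y)<\dist(x,q)$ is strict since $y\ne q$.
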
%
 \begin{proof}%
Let $q$ be an ordinary point of the $\varepsilon$-net. The shortest connection from $x$ to $q$ intersects one of the boundary segments of length $\varepsilon$ in some point $y$. One of the endpoints of the segment, say $p$, satisfies $\dist(y,p) \leq \sfrac{ \varepsilon }{2}$. We now have $\dist(x,p) \leq \dist(x,y) + \dist(y,p) \leq \dist(x,y) + \sfrac{ \varepsilon }{2}$, where at least one of the inequalities is strict. On the other hand $\dist(x,q) = \dist(x,y) + \dist(y,q) \geq \dist(x,y) + \dist(p,q) - \sfrac{ \varepsilon }{2} \geq \dist(x,y)+ \sfrac{ \varepsilon }{2}$. Altogether $\dist(x,p) < \dist(x,q)$ which proves the claim.
 \end{proof}%

Our $\varepsilon$-net will now be considered as a distribution of points on $S$ and defines a decomposition of $S$ into Voronoi cells. We shall call ``special Voronoi cells" those around special points and ``ordinary Voronoi cells'' those around ordinary points. The cell decomposition consisting of all special and all ordinary Voronoi cells is again denoted by $\V$. The following properties take over from those in Section~\ref{sec:Voronoi}: 
 \begin{lemma}\label{lem:voronoisides}%
Any cell of $\V$ contains an embedded disk of radius $\sfrac{ \varepsilon }{2}$ and has at most 12 sides.
 \end{lemma}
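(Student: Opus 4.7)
The plan is to verify both statements by combining the dual-triangulation argument used for ordinary Voronoi cells in Section~\ref{sec:Voronoi} with a geometric analysis of the special points around each reduced collar and cusp.

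For the embedded disk of radius $\varepsilon/2$, it suffices that the injectivity radius at every net point exceed $\varepsilon/2$, since the net is $\varepsilon$-separated. For an ordinary center the point lies in the thick part $S'$, where the collar and cusp theorems recalled in Section~\ref{sec:thickthin} ensure injectivity radius at least $\varepsilon$; for a special center we have the stronger bound $r_p > 1.8\varepsilon$ derived there. Thus the ball of radius $\varepsilon/2$ embeds, and because any other net point is at distance at least $\varepsilon$, the ball lies entirely in the Voronoi cell.

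For the twelve-side bound I follow Lemma~\ref{lem:vornum}: the number of sides of the cell at a net point $c$ equals the number of Delaunay triangles containing $c$, and every such triangle has all sides of length at least $\varepsilon$ by the net property. I then distinguish two cases according to where the opposite Voronoi vertex $v$ lies. If $v \in S'$, the maximality of the net on $S'$ forces the nearest net point to $v$ to be within $\varepsilon$, so all three triangle vertices lie in a common ball of radius $\varepsilon$. Lemma~\ref{lem:MinimalAngle} then applies exactly as in Section~\ref{sec:Voronoi} and gives the desired lower bound on the angle at $c$. If instead $v$ lies in a reduced collar $\C'_k$ or cusp $\mathscr{P}'_i$, Lemma~\ref{lem:specialpoints} forces all three vertices of the triangle to be special points on the boundary of that same thin region, and the Lemma~\ref{lem:MinimalAngle} argument no longer applies since the sides of the triangle may be as long as $2\omega_k$.

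In the thin case the enumeration has to be done directly. Using the rigid configuration of the eight special points on the two parallel curves at distance $\omega_k$ from $\beta_k$ (respectively the four points on a horocycle around a cusp) together with Fermi coordinates to compute perpendicular bisectors, I check that each special point has only a bounded number of Voronoi neighbors inside its thin region. Generic perturbation as in Remark~\ref{rem:perturbation} puts us in the triangulation regime, and a direct enumeration yields at most five thin-side neighbors per special point: the two same-side neighbors, the directly opposite partner across $\beta_k$, and at most two diagonal partners. Since the thick-side contribution is bounded via the half-plane analogue of the angle argument of Lemma~\ref{lem:MinimalAngle}, the total number of sides is at most twelve.

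The main obstacle will be the thin-side count: one must verify that a special point acquires no more ``diagonal'' Voronoi neighbors across $\beta_k$ than the enumeration above allows, and the cusp situation has to be recovered as the limit of the collar computation as $\ell(\beta_k) \to 0$. Both are handled by the explicit distance formula in the Fermi chart around $\beta_k$, using $\cosh d = \cosh y_1 \cosh y_2 \cosh(\Delta x) - \sinh y_1 \sinh y_2$, which makes the relevant perpendicular bisectors tractable and shows that distance competitions between non-adjacent special points are decided by the same-side neighbors.
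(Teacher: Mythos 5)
Your overall strategy is the same as the paper's: ordinary cells are handled exactly as in Section~\ref{sec:Voronoi} (via Lemma~\ref{lem:specialpoints} they stay in $S'$), the disk of radius $\varepsilon/2$ comes from the injectivity radius bounds plus $\varepsilon$-separation, and the special cells are analyzed by combining the rigid configuration of the eight points around $\beta_k$ with the angle argument of Lemma~\ref{lem:MinimalAngle} for the thick side. Your thin-side enumeration (two same-side neighbours, the mirror partner across $\beta_k$, and two degenerate ``diagonal'' contacts) agrees with what the paper establishes directly: by symmetry the part of the cell at $p$ inside $\C'_k$ is bounded by the two perpendiculars to $\beta_k$ through the midpoints of the adjacent boundary segments and by a segment of $\beta_k$ itself, the latter being the bisector with the mirror point, with four cells meeting at its endpoints.

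The genuine gap is your treatment of the thick side. You bound it ``via the half-plane analogue of the angle argument,'' but nothing in your proposal bounds the angle that the thick part subtends at $p$, and this is precisely the quantitative step the paper has to do: applying formula 2.3.1(iii) of Buser to the two trirectangles with sides $\varepsilon/2$, $\omega'_k$, $\tfrac18\ell(\beta_k)$, $\omega_k$ gives $\cosh(\tfrac18\ell(\beta_k))=\cosh(\varepsilon/2)\sin\varphi$, hence the interior angle of $\C'_k$ at $p$ is $2\varphi\geq 0.9\pi$. Without such a lower bound the thick sector could a priori be close to $2\pi$, in which case Lemma~\ref{lem:MinimalAngle} allows up to $12$ thick-side triangles and your total $5+12$ overshoots the claimed bound. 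Note also that the corner angle is strictly less than $\pi$ (each trirectangle has an acute angle at $p$), so the thick sector is up to $1.1\pi$ and your literal ``half-plane'' claim is false; with the correct bound one gets at most $\lfloor 1.1\pi/\varphi_\varepsilon\rfloor\leq 7$ thick-side triangles, and $5+7=12$ still closes the count (the paper's sharper bookkeeping gives at most $9$ sides plus one degenerate side). A minor further point: you cannot invoke the generic perturbation of Remark~\ref{rem:perturbation} for the special points, since their positions are forced by the construction and four cells necessarily meet at the endpoints of the side on $\beta_k$; your enumeration tolerates this degeneracy, but the appeal to genericity should be dropped rather than relied on.
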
%
 \begin{proof}%
Lemma~\ref{lem:specialpoints} implies that the ordinary Voronoi cells are contained in $S'$ and have the same properties as those in Section~\ref{sec:Voronoi}. For the special cells we argue as follows. Consider, for instance, the special point $p$ on the boundary of the reduced collar $\C'_k$ as shown in Figure~\ref{fig:ThinHandle}. It follows from Lemma~\ref{lem:specialpoints} that the part in $\C'_k$ of the Voronoi cell at $p$ is the shaded polygon shown in the figure with two sides of length $\sfrac{ \varepsilon }{2}$ on the segments adjacent to $p$, then the two perpendiculars from these segments to $\beta_k$ and finally a side of length $\frac{1}{4} \ell(\beta_k)$ on $\beta_k$. The domain may be decomposed into two identical trirectangles with acute angle, say $\varphi$, at $p$. {\bf Formula \cite[2.3.1(iii)]{BuserBook}} yields $\cosh(\frac{1}{8}\ell(\beta_k))=\cosh(\sfrac{ \varepsilon }{2}) \sin \varphi$. Given that $\varepsilon \leq \frac{1}{3}$ and $\ell(\beta_k) \leq 2\varepsilon$ we deduce from it that the obtuse angle of the shaded domain at $p$ is $2 \varphi \geq 0.9 \pi$. Proceeding as in the proof of Lemma~\ref{lem:vornum} we now conclude that the Voronoi cell at $p$ has at most 9 sides. (From a combinatorial point of view, however, we count an additional degenerate side on $\beta_k$ since at the endpoints of the sides on $\beta_k$ there are four meeting cells.) Finally, with a glance at Figure~\ref{fig:ThinHandle} we see that the cell contains an embedded disk of radius $\sfrac{ \varepsilon }{2}$ centered at $p$.
 \end{proof}

When $p$ is on the boundary of a reduced cusp neighborhood then the result is the same except that the shaded domain is degenerate and one side is replaced by a point at infinity. Summing up we have the same conclusion as in Corollary~\ref{cor:edges} using that the area of $S$ is now $2\pi(2g-2+n)$
\begin{corollary}\label{cor:edges2}
The number of sides of $\V$ is bounded above by $ \left(\frac{97}{\varepsilon^2}-10\right)(g-1+\frac{n}{2})$.

\end{corollary}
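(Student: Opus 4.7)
The plan is to follow the same three-step pattern used in Corollary~\ref{cor:edges}, with the only genuine change being a replacement of the Euler characteristic $2(g-1)$ (equivalently, the area $4\pi(g-1)$) by $2g-2+n$ (area $2\pi(2g-2+n)$), since the structural ingredients (embedded $\sfrac{\varepsilon}{2}$-disk in each cell, at most $12$ sides per cell) have already been supplied by Lemma~\ref{lem:voronoisides}.

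First, I would bound the number $N$ of Voronoi cells in $\V$ by a packing/area argument, exactly as in Lemma~\ref{lem:netcard}. By the construction of the $\varepsilon$-net in Section~\ref{sec:epsnetthick}, the special and ordinary points are at pairwise distance $\geq \varepsilon$, and the injectivity radius at each of them exceeds $\varepsilon/2$ (for ordinary points by their location in $S'$, for special points by the lower bound $r_p > 1.8\varepsilon$ computed in Section~\ref{sec:thickthin}). Hence the open balls of radius $\varepsilon/2$ around the net points are pairwise disjoint and embedded. Using that the area of $S$ is $2\pi(2g-2+n)$, the same area comparison as in Lemma~\ref{lem:netcard} gives
\begin{equation*}
N \;\leq\; \frac{2g-2+n}{\cosh(\sfrac{\varepsilon}{2})-1} \;=\; \frac{2(g-1+\sfrac{n}{2})}{\cosh(\sfrac{\varepsilon}{2})-1}.
\end{equation*}

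Next, by Lemma~\ref{lem:voronoisides}, each cell of $\V$ has at most $12$ sides (counting the single degenerate side on $\beta_k$ for the special cells, as explicitly noted in the proof of that lemma). Since every edge is shared by exactly two cells, twice the number of edges $E$ is at most the sum over cells of their side count, so $2E \leq 12 N$, i.e.\ $E \leq 6N$. Combining this with the bound on $N$ yields
\begin{equation*}
E \;\leq\; \frac{12(g-1+\sfrac{n}{2})}{\cosh(\sfrac{\varepsilon}{2})-1}.
\end{equation*}

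Finally, I would finish with the same elementary simplification used in Corollary~\ref{cor:edges}: for $\varepsilon \leq \sfrac{1}{3}$ one has $\frac{12}{\cosh(\sfrac{\varepsilon}{2})-1} \leq \frac{97}{\varepsilon^2}-10$ (this is precisely the numerical estimate from the proof of Corollary~\ref{cor:edges}, which holds for $\varepsilon \leq \sfrac{1}{3}$ independently of any topological data). This delivers the claimed bound $E \leq \left(\frac{97}{\varepsilon^2}-10\right)(g-1+\sfrac{n}{2})$. There is no real obstacle here: the only subtlety is the bookkeeping that ensures Lemma~\ref{lem:voronoisides} applies uniformly to special and ordinary cells alike (in particular that the degenerate side on a small geodesic is included in the count of $12$), but this is already built into the statement of that lemma. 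The argument therefore amounts to substituting $g-1 \leadsto g-1+\sfrac{n}{2}$ throughout the closed-surface proof.
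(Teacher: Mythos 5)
Your proposal is correct and follows essentially the same route as the paper, which simply observes that the conclusion of Corollary~\ref{cor:edges} carries over verbatim once the area $4\pi(g-1)$ is replaced by $2\pi(2g-2+n)$ and Lemma~\ref{lem:voronoisides} supplies the embedded $\sfrac{\varepsilon}{2}$-disks and the $12$-side bound for both ordinary and special cells. Your packing bound on the number of cells, the estimate $2E\leq 12N$, and the numerical inequality $\frac{12}{\cosh(\sfrac{\varepsilon}{2})-1}<\frac{97}{\varepsilon^2}-10$ for $\varepsilon\leq\sfrac{1}{3}$ are exactly the ingredients the paper intends.
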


\subsection{Traversing and terminal arcs}\label{sec:travend}

We now proceed as in the previous section to construct our model strands. As before, we consider an embedded disk $D$ of radius $\rho>0$ and a geodesic segment $c$ in $D$ belonging to the Birman-Series set. This time, however, $D$ is contained in the thick part $S'$.
 We take the continuation $\bar{c}$ of $c$ in both directions of length $R-2\varepsilon$. The endpoints lie in Voronoi cells. The only difference with the previous sections is the way in which we deal with the special Voronoi cells.
The intersections of $\bar{c}$ with the collars and cusps may be of two kinds: \emph{traversing arcs} and \emph{terminal arcs}. The precise definition and properties are as follows.


\emph{Traversing arcs.}\; Figure~\ref{fig:TraversingArc} shows half of a traversing arc $\mathcal{T}$ lifted to the universal covering of $S$. The horizontal line $\tilde{\beta}_k$ on the top is a lift of the geodesic $\beta_k$, the lower end is a lift of one of the boundary curves of $\C'_k$ consisting of geodesic segments of length $\varepsilon$. The dotted line is tangent to these and is a curve parallel to $\tilde{\beta}_k$ at distance $\omega'_k$. The vertices have distance $\omega_k$ to $\tilde{\beta}_k$. The meanings of $\omega_k$ and $\omega'_k$ are the same as in Section~\ref{sec:thickthin} (Figure~\ref{fig:ThinHandle}).

The projection (under the universal covering map) on $S$ of the strip between the dotted lines is the \emph{dotted collar} $\C''_k \subset \C'_k \subset \C_k$ defined as
 \begin{equation}\label{eq:dotted reduced}%
\C''_k = \{ x \in S \mid \dist(x,\beta_k) \leq \omega'_k \}.
 \end{equation}%

The boundary curves of $\C''_k$ are tangent to the boundary curves of $\C'_k$. In a similar way we define (for later use) in each cusp neighborhood $\mathscr{P}_i$ the \emph{dotted} cusp neighborhood $\mathscr{P}''_i \subset \mathscr{P}'_i \subset \mathscr{P}_i$ whose boundary curve is a horocycle tangent to the boundary curve of $\mathscr{P}'_i$.

Note that the traversing arcs begin and end on the dotted lines, i.e., any traversing arc $\mathcal{T}$ is a connected component, for some $k = 1, \dots, h$, of $\bar{c} \cap \C''_k$ that has its endpoints on the two opposite boundary curves of $\C''_k$.

The arc labelled $\lambda$ in Figure\ref{fig:TraversingArc} is half of the lift of $\mathcal{T}$ going from the dotted line to $\tilde{\beta}_k$. The label also denotes the length of this arc. Projecting $\lambda$ orthogonally to $\tilde{\beta}_k$ %
\begin{figure}[bt]
\leavevmode \SetLabels
(0.24*1.03) $\tilde{\beta}_k$\\
\L(0.162*.55) $\omega_k$\\
\L(0.225*.65) $\omega'_k$\\
\L(0.295*.65) $\omega'_k$\\
(0.275*.0) $\sfrac{\varepsilon}{2}$\\
(0.355*.0) $\sfrac{\varepsilon}{2}$\\
(0.62*.0) $\varepsilon$\\
(0.74*.0) $\varepsilon$\\
(0.45*.61) $\lambda$\\
(0.45*1.03) $\hat{\lambda}$\\
(0.318*-0.07) $\tilde{p}$\\
(0.345*.33) $\varepsilon$\\
(0.304*.0945) $A$\\
(0.34*.18) $\mathcal{U}$\\
\endSetLabels
\begin{center}
\AffixLabels{\centerline{\epsfig{file =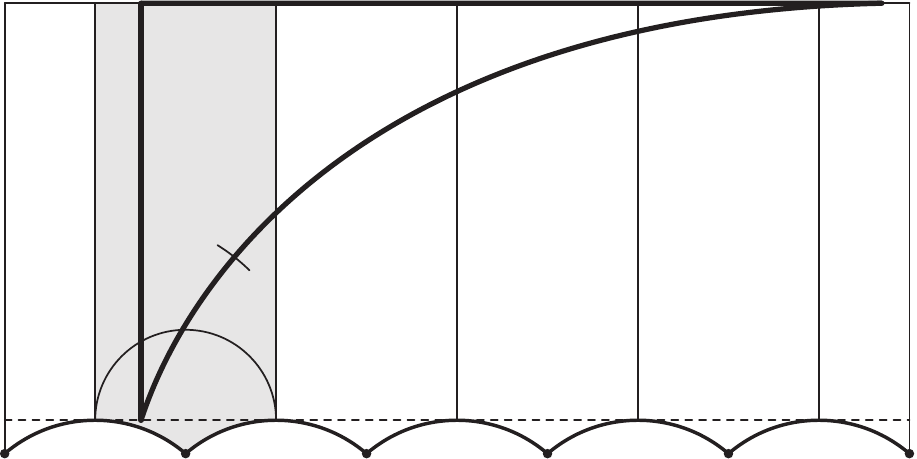,angle=0}}}
\vspace{-24pt}
\end{center}
\caption{\label{fig:TraversingArc}Traversing arc depicted in the universal covering}\end{figure}
we obtain the leg $\hat{\lambda}$ of a right-angled geodesic triangle with hypothenuse $\lambda$ whose other leg is $\omega'_k$. We now compare $\lambda$ with $\hat{\lambda}$.

{\bf Formula \cite[2.2.2(i)]{BuserBook}} applied to the aforementioned right angled triangle yields $\cosh(\lambda) = \cosh(\hat{\lambda})\cosh(\omega'_k)$, and by {\bf formula \cite[2.3.1(iv)]{BuserBook}} applied to the trirectangle with sides $\frac{1}{2}\varepsilon, \omega'_k, \frac{1}{8}\ell(\beta_k), \omega_k$ we have $\cosh(\omega'_k)=\tanh(\sfrac{\varepsilon}{2})\coth(\frac{1}{8}\ell(\beta_k))$. Bringing this together and recalling that $\ell(\beta_k) \leq 2 \varepsilon$, $\varepsilon \leq 1/3$ we get $\cosh(\lambda) \geq \cosh(\hat{\lambda})\tanh(\sfrac{\varepsilon}{2})\coth(\sfrac{\varepsilon}{4}) \geq 1.98 \cosh(\hat{\lambda})$ and then, 
by an elementary estimate,
 \begin{equation}\label{eq:lamlam}%
\lambda - \hat{\lambda} > 2/3.
 \end{equation}%
The endpoint $A$ of $\lambda$ on the dotted line lies in a disk $\mathcal{U}$ of radius $\sfrac{\varepsilon}{2}$ around one of the vertices $\tilde{p}$. By the triangle inequality this implies, in turn, that \emph{$\mathcal{U}$ is contained in the disk of radius $\varepsilon$ around $A$}. 

\emph{Terminal arcs.}\; These are the connected components of the intersections of $\bar{c}$ with the dotted collars and cusp neighborhoods that have one endpoint on the boundary while the other lies in the interior. There are at most two such components. 

Since the infinite geodesic extension of $\bar{c}$ is a simple curve it follows that terminal arcs in a cusp neighborhood are ``vertical'' that is, orthogonal to the horocycles. In the collars the situation is different.

\begin{figure}[ht]
\leavevmode \SetLabels
(0.18*0.50) $\tilde{\beta}_k$\\
(0.493*0.72) $\omega'_k$\\
(0.215*0.30) $\omega'_k$\\
(0.377*0.71) $\sfrac{R'}{2}$\\
(0.33*0.30) $\sfrac{R'}{2}$\\
(0.43*0.53) $t$\\
(0.268*0.53) $\sigma_k$\\
(0.50*0.315) $R'$\\
(0.265*-0.055) $\tilde{p}$\\
(0.225*-0.022) $\tilde{q}$\\
(0.605*0.71) $\tilde{r}$\\
(0.24*1.03) $B$\\
(0.47*1.03) $C$\\
(0.627*0.395) $D$\\
(0.265*0.93) $\varepsilon'$\\
(0.325*0.93) $\varepsilon'$\\
(0.555*0.86) $\varepsilon$\\
(0.618*0.635) $\varepsilon$\\
(0.25*0.16) $\phi$\\
(0.455*0.85) $\phi$\\
(0.275*0.115) $\theta$\\
(0.785*0.455) $\eta$\\
\endSetLabels
\begin{center}
\AffixLabels{\centerline{\epsfig{file =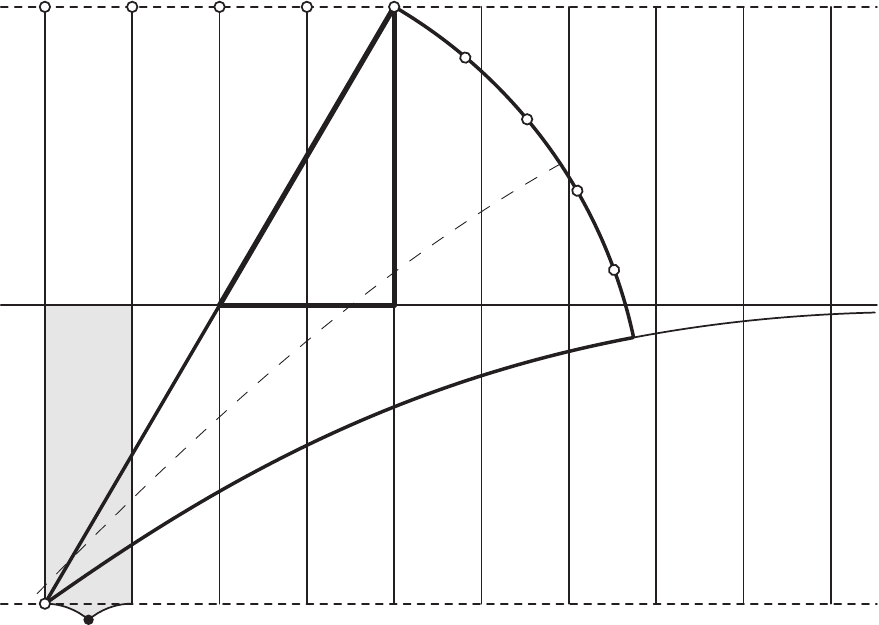,angle=0}}}
\vspace{-20pt}
\end{center}
\caption{\label{fig:EndArc}Endpoints for model strands in a reduced collar lifted to the universal covering. The dashed line is the lift of an extended terminal arc.}\end{figure}

To deal with the terminal arcs in $\C_k$ ($k = 1, \dots, h$) we supplement the $1$-skeleton of $\V$ by a set of \emph{{terminal segments}} since the $\varepsilon$-net has no vertices in $\C''_k$. For the description we use again the universal covering. Figure~\ref{fig:EndArc} shows the part of the lift of $\C_k$ that lies between the dotted lines. The horizontal line in the middle is a lift $\tilde{\beta}_k$ of $\beta_k$. The vertical lines are geodesic arcs orthogonal to $\tilde{\beta}_k$ at successive distances 
 \begin{equation*}%
\sigma_k = \frac{1}{4}\ell(\beta_k).
 \end{equation*}%

The shaded domain with vertices $\tilde{p}$, $\tilde{q}$ and sides of lengths $\sfrac{\varepsilon}{2}, \omega'_k, \sigma_k, \omega'_k, \sfrac{\varepsilon}{2}$ has the same meaning as in Figure~\ref{fig:TraversingArc} and is a lift of the shaded domain shown in Figure~\ref{fig:ThinHandle}. 

The terminal segments will be issued at the midpoints of the boundary segments of lengths $\varepsilon$. We describe these arcs in the case of the midpoint point $q$ at the boundary of $\C'_k$ whose lift $\tilde{q}$ in the universal covering is shown in Figure~\ref{fig:EndArc}. We shall assume that $R \geq 2 \omega'_k$ and add the simple modification for $R < 2 \omega'_k$ at the end.

The midpoints of the boundary segments on the dotted line on the top -- we shall call them ``white points'' -- have successive distances of some value $\varepsilon'$ slightly smaller than $\varepsilon$. We let $B$ be the white point on the top opposite to $\tilde{q}$ and then $C$ the white point to the right of $B$ whose distance $R'$ to $\tilde{q}$ is as close to $R$ as possible. Then $R -\varepsilon' \leq R' \leq R + \varepsilon'$. For the number $z_1$ of white points from $B$ to $C$, not counting $B$, we have the following, where $t$ is the orthogonal projection of the segment of length $\sfrac{R'}{2}$ onto $\tilde{\beta}_k$ (Figure~\ref{fig:EndArc}),
 \begin{equation*}%
z_1 = \frac{2t}{\sigma_k} \leq \frac{R'}{\sigma_k} \leq \frac{R+\varepsilon}{\sigma_k}
 \end{equation*}%
Some of the comparison arcs will go from $\tilde{q}$ to these points, but we need further ones. To this end we look at the geodesic ray $\eta$ emanating from $\tilde{q}$ asymptotic to $\tilde{\beta_k}$ as shown in Figure~\ref{fig:EndArc}. Together with $\tilde{\beta}_k$ and the vertical line at $\tilde{q}$ it forms an ideal right-angled triangle with finite leg $\omega'_k$ and acute angle at $\tilde{q}$ which we write as a sum $\phi + \theta$, $\phi$ being the angle at $\tilde{q}$ of the right-angled triangle with hypothenuse $\sfrac{R'}{2}$ and legs $\omega'_k$, $t$. 
By {\bf formula \cite[2.2.2(vi)]{BuserBook}}
 \begin{equation*}%
\cos(\phi) = \frac{\tanh(\omega'_k)}{\tanh(\sfrac{R'}{2})}, \quad \cos(\phi + \theta) = \tanh(\omega'_k).
 \end{equation*}%
Out of these relations we get, by elementary transformations,%
 \begin{equation*}%
\sin(\theta) = \frac{\tanh(\omega'_k)}{\tanh(\sfrac{R'}{2})}\cdot \frac{1}{\cosh(\omega'_k)}\left\{1 - \sqrt{1- \frac{\cosh(\omega'_k)^2}{\cosh(\sfrac{R'}{2})^2}}\right\}.
 \end{equation*}%
Using that for $0 \leq x \leq 1$ we have $1-\sqrt{1-x} \leq x$ and that $\theta < \sfrac{\pi}{2} \sin(\theta)$ we further obtain
%
 \begin{equation*}%
\theta < \frac{\pi\sinh(\omega'_k)}{\sinh(R')}.
 \end{equation*}%

We now introduce additional white points on the arc that goes from $C$ to $D \in \eta$ on the circle with radius $R'$ and center $\tilde{q}$ as shown in Figure~\ref{fig:EndArc}. We position them in such a way that the successive arcs between them have lengths $\varepsilon$. Let $z_2$ be the number of these points, not counting $C$. Then $z_2 \leq \frac{\theta}{\varepsilon} \sinh(R')<\frac{\pi}{\varepsilon} \sinh(\omega_k')$. Using that $\cosh(\omega'_k) = \tanh(\sfrac{\varepsilon}{2})\coth(\sfrac{\sigma_k}{2}) \leq\frac{\varepsilon}{\sigma_k}$, {\bf formula \cite[2.3.1(iv)]{BuserBook}} applied to the trirectangle with sides $\sfrac{\varepsilon}{2}, \omega'_k, \sfrac{\sigma_k}{2}, \omega_k$) we further get
 \begin{equation*}%
z_2 < \frac{\pi}{\sigma_k}.
 \end{equation*}%
Now $z_1+z_2$ is the number of white points to the right of $B$ (Figure~\ref{fig:EndArc}) and there is the same number of similar points on the left. Drawing the connecting geodesic arcs from $\tilde{q}$ to these points (including the arc from $\tilde{q}$ to $B$) and projecting them from the universal covering to $S$ we get the \emph{terminal segments at} $q$.

So far we have assumed that $R \geq2 \omega'_k$. If $R < 2 \omega'_k$, then $z_1 = 0$ and the number of white points on the circular arc of radius $R$ is smaller than or equal to the number of such points we would get on the arc of radius $2 \omega'_k$. For the latter we have already found the bound $\frac{\pi}{\sigma_k}$. Hence again $z_2 < \frac{\pi}{\sigma_k}$.

Summing up we have the following.
 \begin{lemma}\label{lem:ModelEnd}%
At any white point $q$ on the boundary of $\C''_k$ there are at most $\frac{2}{\sigma_k}(R+4)$ terminal segments.
 \end{lemma}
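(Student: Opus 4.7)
The plan is very short: all the hard work has already been done in the preceding discussion, and what remains is just to add up the counts coming from the three sources of terminal segments at $q$ and verify the arithmetic.

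First I would observe that by the construction of the white points, the number $z_1+z_2$ counts the white points strictly to the right of $B$ on the combined curve made of the top dotted line from $B$ to $C$ (giving $z_1$ points, not including $B$) followed by the circular arc of radius $R'$ from $C$ to $D$ (giving $z_2$ points, not including $C$). By the reflectional symmetry of the configuration about the vertical geodesic through $\tilde{q}$, there are the same number of white points strictly to the left of $B$. Finally there is the single terminal segment $\tilde{q}B$ itself. Altogether, the number $N_q$ of terminal segments at $q$ therefore satisfies
\begin{equation*}
N_q \;\leq\; 2(z_1+z_2)+1 \;\leq\; \frac{2(R+\varepsilon)}{\sigma_k}+\frac{2\pi}{\sigma_k}+1,
\end{equation*}
where the two displayed bounds $z_1\leq (R+\varepsilon)/\sigma_k$ and $z_2<\pi/\sigma_k$ are precisely those just established (and remain valid in the degenerate case $R<2\omega'_k$, where $z_1=0$ and the bound for $z_2$ was justified separately).

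The final step is the elementary arithmetic check that this is bounded by $\frac{2}{\sigma_k}(R+4)$. Using that $\sigma_k=\frac{1}{4}\ell(\beta_k)\leq \frac{\varepsilon}{2}\leq \frac{1}{6}$ (because $\ell(\beta_k)\leq 2\varepsilon$ and $\varepsilon\leq 1/3$), we have
\begin{equation*}
2\varepsilon+2\pi+\sigma_k \;\leq\; \tfrac{2}{3}+2\pi+\tfrac{1}{6} \;<\; 8,
\end{equation*}
and hence
\begin{equation*}
N_q \;\leq\; \frac{2R+2\varepsilon+2\pi+\sigma_k}{\sigma_k} \;\leq\; \frac{2R+8}{\sigma_k} \;=\; \frac{2}{\sigma_k}(R+4).
\end{equation*}

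There is no real obstacle: the only two quantitative inputs are the bounds on $z_1$ and $z_2$ already derived via the trirectangle formula and the asymptotic ideal triangle estimate, and combining them with the symmetry count and the elementary inequality above is immediate.
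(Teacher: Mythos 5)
Your proof is correct and follows exactly the route the paper intends: the lemma is simply the "summing up" of the bounds $z_1\leq (R+\varepsilon)/\sigma_k$ and $z_2<\pi/\sigma_k$ already established (together with the degenerate case $R<2\omega'_k$), the symmetric count on the other side of $B$, and the single segment $\tilde{q}B$. Your arithmetic check that $2\varepsilon+2\pi+\sigma_k<8$ correctly fills in the step the paper leaves implicit.
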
%

Finally, if $q$ is a white point on the boundary of a reduced cusp neighborhood $\mathscr{P}'_i$ then we need only one terminal segment, namely the geodesic arc of length $R$ at $q$ that is orthogonal to the horocycles.

 \mbox{}

%

\subsection{Constructing the model strands}\label{sec:modelstrands} 
We begin by assigning to each collar $\C''_k$ a \emph{winding number} $\tau_k$ with respect to $\bar{c}$ as follows. If $\C''_k$ contains no traversing arc we set $\tau_k = 0$. Otherwise we let $\mathcal{T}_k$ be the longest traversing arc in $\C''_k$, project it orthogonally to a parametrized curve $\hat{\mathcal{T}}_k$ on $\beta_k$ (that may go around $\beta_k$ many times) and set
\begin{equation}\label{eq:windingnumber}%
\tau_k = s_k \left[ \frac{\ell(\hat{\mathcal{T}}_k)}{\ell(\beta_k)} \right],
\end{equation}%
where $[x]$ for $x \in \R$ denotes the largest integer $\leq x$ and $s_k \in \{-1,1\}$ is the orientation, i.e., $s_k = 1$ if $\hat{\mathcal{T}}_k$
winds around $\beta_k$ in the positive sense (with respect to a fixed orientation of $\beta_k$) and $s_k = -1$ otherwise. At some later point we shall simultaneously unwind all traversing arcs in $\C''_k$ by applying a Dehn twist $D_k$ of order $\tau_k$ along $\beta_k$ in the ``unwinding direction'': if $\mathcal{T}$ traverses $\C''_k$ from, say $A$ to $A'$, then $D_k(\mathcal{T})$ is the geodesic arc from $A$ to $A'$ in $\C''_k$ that is homotopic (with fixed endpoints) to the curve $\mathcal{T}'$ that goes along $\mathcal{T}$ from $A$ to $\beta_k$, then $\vert\tau_k\vert$ times around $\beta_k$ in the opposite direction of $\hat{\mathcal{T}}_k$ and after that along $\mathcal{T}$ to $A'$. It follows from this construction that the orthogonal projection of $D_k(\mathcal{T}_k)$ on $\beta_k$ has length $< \ell(\beta_k)$. Since the traversing arcs are pairwise disjoint we conclude that all of them have this property, i.e.,\ \emph{any $D_k(\mathcal{T})$ winds less than once around $\C''_k$}.

The model strand for $\bar{c}$ is constructed in a similar fashion as in Section~\ref{sec:locqua}, though with a modification for the traversing arcs and the terminal arcs. We proceed in three steps. First we split $\bar{c}$ into a product
 \begin{equation*}%
\bar{c} = c_0 c_1 c_2 c_3 \cdots c_{2J+1}c_{2J+2}
 \end{equation*}%
($J\geq0$), where $c_0$ and $c_{2J+2}$ are either terminal arcs (Section~\ref{sec:travend}) or point curves depending on whether or not $\bar{c}$ begins, respectively ends with a terminal arc; the parts $c_1, c_3, \dots, c_{2J+1}$ are outside the dotted collars and dotted cusp neighborhoods; 
the parts $c_{2j}$, for $j=1,\dots,J$, comprise all the traversing arcs, each of them traversing some dotted collar $\C''_{k_j}$. Here we are using that a complete simple geodesic cannot enter and leave a collar on the same side (Lemma~\ref{lem:collar}) and that, similarly, it cannot enter and leave a cusp neighborhood.
%
To deal with the windings separately we first ``unwind'' $\bar{c}$\, and also leave out $c_0$ and $c_{2J+2}$ setting
 \begin{equation*}%
\breve{c} = c_1 \breve{c}_2 c_3\breve{c}_4c_5 \cdots c_{2J-1} \breve{c}_{2J} c_{2J+1},
 \end{equation*}%
where $\breve{c}_{2j} = D_{k_j}(c_{2j})$, $j=1, \dots, J$. Now $\breve{c}$ traverses a succession of Voronoi cells and we associate to it a polygonal curve $P(\breve{c})$ by the same procedure as in Section~\ref{sec.concom} with a minor adaptation: if $c_0$ is a terminal arc, say $c_0 \subset \C''_{\ell}$, then $P(\breve{c})$ has its initial point on the boundary of $\C''_{\ell}$ (as though $\C''_{\ell}$ was the 0-th Voronoi cell that $\breve{c}$ goes along). This initial point is then one of the white points. Similarly, if $c_{2J+2}$ is a terminal arc in some $\C''_k$, then the endpoint of $P(\breve{c})$ is a white point on the boundary of $\C''_k$.

The curves $\breve{c}$ and $P(\breve{c})$ have nearby endpoints: there is a geodesic arc $\breve{u}$ of length $\leq 2 \varepsilon$ leading from the initial point of $\breve{c}$ to the initial point of $P(\breve{c})$ and a geodesic arc $\breve{v}$ of length $\leq 2 \varepsilon$ leading from the endpoint of $P(\breve{c})$ to the endpoint of $\breve{c}$. The paths $\breve{c}$ and $\breve{u}P(\breve{c})\breve{v}$ are homotopic.

In the next step we apply to $P(\breve{c})$ the reversed Dehn twists $D_k^{-1}$, $k=1,\dots,h$. The resulting curve $\overfrown{P}(\breve{c}) = D_1^{-1} \cdots D_h^{-1}(P(\breve{c}))$ has the same endpoints as $P(\breve{c})$ and $\breve{u}\overfrown{P}(\breve{c})\breve{v}$ is homotopic to the curve $c_1c_2c_3c_4\cdots c_{2J+1}$.

In the final step we add the possible terminal segments. We describe this for $c_{2J+2}$, the procedure for $c_0$ being the same. If $c_{2J+2}$ is a point curve we set $\breve{c}_{2J+2} = c_{2J+2}$. Now assume that $c_{2J+2}$ is a terminal arc in some dotted collar $\C''_k$. Then the initial point of $c_{2J+2}$ lies within distance $\leq \sfrac{\varepsilon}{2}$ of the endpoint, say $q$, of $\overfrown{P}(\breve{c})$ on the boundary of $\C''_k$ and we first extend $c_{2J+2}$ to a longer geodesic arc $c'_{2J+2}$ in such a way that if we lift it to the universal covering, then the other endpoint lies within distance $\leq \sfrac{\varepsilon}{2}$ of some white point, say $\tilde{r}$, on the arc $BCD$ as depicted in Figure~\ref{fig:EndArc} (or its symmetric image across $\tilde{q}B$), where the dotted line is $c'_{2J+2}$. The geodesic from $\tilde{q}$ to $\tilde{r}$ is the lift of a terminal segment and we let 
$\breve{c}_{2J+2}$ be this terminal segment. There is a connecting arc $v$ of length $\leq \sfrac{\varepsilon}{2}$ from the endpoint of $\breve{c}_{2J+2}$ to the endpoint of
$c'_{2J+2}$, and the curves $(\breve{v})^{-1} \breve{c}_{2J+2} v$ and $c'_{2J+2}$ are homotopic. Finally, if $c_{2J+2}$ is a terminal arc in some dotted cusp neighborhood $\mathscr{P}''_i$, then we proceed similarly except that we take the extension $c'_{2J+2}$ to be of length $R$. In this latter case $\breve{c}_{2J+2}$ is a geodesic arc orthogonal to the horocycles.

We now let $\bar{c}'$ be the extended geodesic arc $\bar{c}' = c_0' c_1 c_2 c_3 c_4 \cdots c_{2J+1} c'_{2J+2}$ (where $c'_0$ is defined in the same way as $c'_{2J+2}$) and define the model strand $m^c$ to be the geodesic arc in the homotopy class of $\breve{c}_0 \overfrown{P}(\breve{c})\breve{c}_{2J+2}$. Then there is a connecting arc $u$ of length $\leq 2 \varepsilon$ from the initial point of $\bar{c}'$ to the initial point of $m^c$, and the already described connecting arc $v$ of length $\leq 2 \varepsilon$ from the endpoint of $m^c$ to the endpoint of $\bar{c}'$. By the aforementioned homotopies the curves $u \,m^c v$ and $\bar{c}'$ are homotopic. In the disk $D$ we have therefore the same estimate for the distance between $c$ and $m_c$ as in \eqref{eq:tubular} and so we get the following result for our model strands $m^c$ and their components $m_c$ in $D$:

%
 \begin{lemma}\label{lem:distcmc}%
Any component $c$ of the Birman-Series set in $D$ lies in a tubular neighborhood of radius $w_R := 3e^{-R}$ of the model strand $m_c$.
 \end{lemma}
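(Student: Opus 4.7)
The proof is an application of Lemma~\ref{lem:width} to the pair $(\bar{c}', m^c)$, paralleling the derivation of the estimate \eqref{eq:tubular} in Section~\ref{sec:bounddist}. First, I would transfer the data to the universal cover $\Hyp$. By the construction in Section~\ref{sec:modelstrands}, the connecting arcs $u$ and $v$ have length $\leq 2\varepsilon$ and $u\,m^c\,v$ is freely homotopic to $\bar{c}'$; choosing compatible lifts gives two geodesic arcs in $\Hyp$ whose pairs of initial and terminal points are at distance $\leq 2\varepsilon$ from each other. This is exactly the hypothesis of Lemma~\ref{lem:width} with $\delta = 2\varepsilon$.

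Next, I would apply Lemma~\ref{lem:width} using the disk $D$ of radius $\rho \leq \varepsilon \leq \tfrac{1}{3}$ centered at the midpoint $M$ of $c$. Since $\bar{c} \subset \bar{c}'$ is the extension of $c$ by $R - 2\varepsilon$ on each side of $M$, the point $M$ lies at distance at least $R - 2\varepsilon$ from each endpoint of $\bar{c}'$ (the terminal extensions $c_0'$ and $c_{2J+2}'$, whatever their lengths, only push the endpoints of $\bar{c}'$ further from $M$). The lemma then yields
$$ r \;\leq\; \arcsinh\!\left(\frac{\cosh\rho\,\sinh 2\varepsilon}{\cosh(R-2\varepsilon)}\right) \;\leq\; 3e^{-R} \;=\; w_R, $$
by the same elementary simplification as in \eqref{eq:tubular}. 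Intersecting with $D$ and passing to the connected component through the midpoint gives that $c$ lies in a $w_R$-neighbourhood of $m_c$.

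The only mildly subtle point is that the midpoint of $\bar{c}'$ need not coincide with $M$ when $c_0'$ and $c_{2J+2}'$ have different lengths, so Lemma~\ref{lem:width} has to be used in its off-centre form in which $\ell(b)/2$ is replaced by the minimum of the two distances from $M$ to the endpoints of $b$. The proof of Lemma~\ref{lem:width} goes through essentially verbatim in this generality: the extremal $b'$ is still tangent to the two $\delta$-disks at the endpoints of $b$, and the worst-case distance on $b \cap D$ is still attained where $b$ exits $D$, the trirectangle formula \cite[2.3.1(v)]{BuserBook} giving the same bound but with $\cosh(\ell(b)/2)$ replaced by $\cosh(R-2\varepsilon)$. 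This bookkeeping is where I would expect to spend the most care; everything else reduces to the already-completed estimates of Section~\ref{sec:bounddist}.
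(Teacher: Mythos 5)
Your proposal is correct and follows essentially the same route as the paper: the paper likewise observes that $u\,m^c v$ and $\bar c'$ are homotopic with connecting arcs of length $\leq 2\varepsilon$, lifts to the universal cover, and invokes the estimate \eqref{eq:tubular} (i.e.\ Lemma~\ref{lem:width} with $\delta = 2\varepsilon$ and half-length $R-2\varepsilon$) to conclude. The off-centre issue you flag is genuine but is passed over silently in the paper (``the same estimate as in \eqref{eq:tubular}''), and your treatment of it is the right one.
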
%

\subsection{Estimating the number of model strands}\label{sec:estmodelstrands} 

 \begin{lemma}\label{lem:edgetwist}%
Let $\breve{L}$ be the number of path edges of $P(\breve{c})$ and $\tau_1, \dots \tau_h$ the winding numbers of $\bar{c}$ at $\beta_1, \dots, \beta_h$ as in \eqref{eq:windingnumber}, then

 \begin{equation*}%
\frac{\varepsilon}{24}\, \breve{L} + \sum_{k=1}^{h}\ell(\beta_k) \vert\tau_k\vert \leq \ell(\bar{c}) + 3 \varepsilon.
 \end{equation*}%
 \end{lemma}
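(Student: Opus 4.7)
My approach splits the bound into two estimates: a combinatorial-to-metric bound on $\breve L$, and a length comparison between $\bar c$ and $\breve c$ that absorbs the twist contributions into $\ell(\bar c)$.

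For the first estimate, I would apply the area argument behind Lemma~\ref{lem:comblength} to the path $\breve c$ itself. Although $\breve c$ is only piecewise geodesic (with possible corners where $c_{2j-1}$ meets $\breve c_{2j}$ on $\partial \C''_{k_j}$), it remains simple: the twists $D_{k_j}$ are supported in the collars, the mutually parallel traversing arcs in one dotted collar all unwind by the same amount, and the odd-indexed outside pieces are unchanged. The disjoint $\tfrac{\varepsilon}{2}$-balls around the centers of the cells that $\breve c$ enters therefore sit inside its $\tfrac{3\varepsilon}{2}$-tube, so the area computation yields at most $\tfrac{4}{\varepsilon}(\ell(\breve c)+3\varepsilon)$ cells, and together with the $\leq 6$ edges per cell from Lemma~\ref{lem:voronoisides} one obtains $\tfrac{\varepsilon}{24}\breve L \leq \ell(\breve c) + 3\varepsilon$.

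For the second estimate, I would prove
\[
\ell(\breve c) + \sum_{k=1}^{h}|\tau_k|\,\ell(\beta_k) \;\leq\; \ell(\bar c)
\]
using hyperbolic trigonometry in the dotted collars. Outside the $\C''_k$, the paths $\breve c$ and $\bar c$ share the same pieces, so this reduces to a statement about the traversing arcs and the terminal pieces. The key input is the contraction property of the orthogonal projection onto a hyperbolic geodesic: by the floor-function definition of $\tau_k$, the projection $\hat{\mathcal T}_k$ of the longest traversing arc $\mathcal T_k$ onto $\beta_k$ has length at least $|\tau_k|\ell(\beta_k)$, and hence $\ell(\mathcal T_k) \geq |\tau_k|\ell(\beta_k)$. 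The explicit distance formula $\cosh d(A,A') = \sinh^2 \omega'_k + \cosh^2 \omega'_k\,\cosh s$ in the universal cover of $\C''_k$ (with $s$ the shift along $\tilde\beta_k$) then allows one to compare $\ell(c_{2j})$ with $\ell(\breve c_{2j})$ across all $j$ inside a single $\C''_k$ and sum over collars, with the nonnegative $\ell(c_0)+\ell(c_{2J+2})$ helping absorb the transverse crossing cost.

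Combining the two estimates yields the lemma. The main obstacle lies in this second step: because both $c_{2j}$ and its unwinding $\breve c_{2j}$ pay the transverse crossing cost $\approx 2\omega'_{k_j}$ across the collar, there is no naive pointwise inequality $\ell(c_{2j}) - \ell(\breve c_{2j}) \geq |\tau_{k_j}|\ell(\beta_{k_j})$. The correct matching pairs each winding contribution $|\tau_k|\ell(\beta_k)$ with the full projection length of $\mathcal T_k$ rather than with a length difference, and recovers the transverse cost of each $\breve c_{2j}$ either from the other traversing arcs in $\C''_k$ or from the endpoint pieces $c_0$ and $c_{2J+2}$ of $\bar c$.
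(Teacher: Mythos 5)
Your architecture differs from the paper's, and the second step contains an error of direction that the first step cannot repair. Your Step 2 reduces to $\sum_j \ell(\breve c_{2j}) + \sum_k |\tau_k|\,\ell(\beta_k) \le \sum_j \ell(c_{2j}) + \ell(c_0)+\ell(c_{2J+2})$, i.e.\ the length saved by unwinding a traversing arc must be at least $|\tau_k|\ell(\beta_k)$. It is in fact at most that. Writing $\ell(s)=\arccosh\bigl(\cosh^2(\omega'_k)\cosh s+\sinh^2(\omega'_k)\bigr)$ for the length of a geodesic crossing $\C''_k$ whose orthogonal projection onto $\beta_k$ has length $s$, one checks $\sinh \ell(s)\ge \cosh^2(\omega'_k)\sinh s$, hence $\ell'(s)\le 1$ and $\ell(\mathcal T_k)-\ell(\breve{\mathcal T}_k)=\ell(s)-\ell\bigl(s-|\tau_k|\ell(\beta_k)\bigr)\le |\tau_k|\ell(\beta_k)$, with a deficit approaching $2\log 2$ as $\ell(\beta_k)\to 0$ (since $\ell(\breve{\mathcal T}_k)\approx 2\omega'_k$ while $\ell(\mathcal T_k)\approx s+2\omega'_k-2\log 2$). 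Your proposed rescue mechanisms fail generically: a collar may contain a single traversing arc, and $c_0$, $c_{2J+2}$ may be point curves. So Step 2 is false, and the resulting loss of order $1$ per thin collar cannot be hidden in the $+3\varepsilon$.

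Step 1 is also not justified as stated: the area argument of Lemma~\ref{lem:comblength} needs every crossed cell to have its center within distance $\varepsilon$ of the arc, which fails for the special cells crossed by $\breve c_{2j}$ deep inside $\C''_k$ --- their centers lie on $\partial \C'_k$, at distance up to $\omega'_k$ from the arc, and $\omega'_k$ is unbounded as $\ell(\beta_k)\to 0$. The paper's accounting avoids both problems: it applies the area argument only to the pieces $c_{2j+1}$ lying outside the dotted collars, counts the special cells crossed by each $\breve c_{2j}$ combinatorially (at most $4$, because the unwound arc winds less than once around $\beta_{k_j}$), and pays for $|\tau_k|\ell(\beta_k)$ not by a length difference but by $\ell(c_{2j(k)})-4\varepsilon$ via the projection estimate \eqref{eq:lamlam}; the $4\varepsilon$ thereby saved per traversing arc exactly covers the extra special-cell crossings. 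The lengths $\ell(\breve c_{2j})$ never enter the estimate, which is precisely what makes it close.
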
%
 \begin{proof}%
For $j=0,\dots,J$, any Voronoi cell (including the special ones) that is crossed by $c_{2j+1}$ has its center at distance $\leq \varepsilon$ from $c_{2j+1}$ (by the completeness of the $\varepsilon$ net in $S'$). By the same area argument as in the proof of Lemma~\ref{lem:comblength} it follows that $c_{2j+1}$ crosses at most $\frac{4}{\varepsilon}(\ell(c_{2j+1})+3\varepsilon)$ times some Voronoi cell. Each $\breve{c}_{2j}$, for $j=1, \dots, J$, crosses an additional number of at most 4 special Voronoi cells. This is so because $\breve{c}_{2j}$ winds at most once around the corresponding collar and its initial and final cells are on the accounts of $c_{2j-1}$ and $c_{2j+1}$, respectively. Altogether $\breve{c}$ crosses at most $\frac{4}{\varepsilon}\sum_{j=0}^J(\ell(c_{2j+1})+3\varepsilon) + 4 J$ times some Voronoi cell and this multiplied by 6 is an upper bound of $\breve{L}$:
 \begin{equation}\label{eq:ineqA}%
\breve{L} \leq \frac{24}{\varepsilon}\sum_{j=0}^J(\ell(c_{2j+1})+3\varepsilon) + 24 J.
 \end{equation}%
For any $k=1,\dots,h$, such that $\tau_k \neq 0$ there is the longest traversing arc $\mathcal{T}_k$ in $\C''_k$ which is one of the $c_{2j}$ say $\mathcal{T}_k = c_{2j(k)}$. By \eqref{eq:lamlam} its orthogonal projection $\hat{c}_{2j(k)}$ on $\beta_k$ has length satisfying $ \ell(\beta_k) \vert \tau_k \vert \leq \ell(\hat{c}_{2j(k)}) < \ell(c_{2j(k)}) -\sfrac{4}{3}\leq \ell(c_{2j(k)})-4 \varepsilon$. Using that any other traversing arc has length $\ell(c_{2j}) > 4 \epsilon$ 
we get
 \begin{equation}\label{eq:ineqB}%
\sum_{k=1}^h \ell(\beta_k)\vert \tau_k \vert \leq \sum_{j=1}^{J}(\ell(c_{2j})-4\varepsilon)
 \end{equation}%
With \eqref{eq:ineqA} and \eqref{eq:ineqB} the proof is complete.
\end{proof}%
%
%
In the next lemma the constant $G$ stems from Corollary~\ref{cor:edges2}. In the case $n=0$ it is the same as in \eqref{eq:defG}, Section~\ref{sec:finaliz}.
 \begin{lemma}\label{lem:numbermc}%
Set $G = \frac{97}{\varepsilon^2}(g-1+\frac{n}{2})$ and
 \begin{equation*}%
\mathcal{N}(R) = \frac{1}{10}\, \frac{m^G}{G^G}R^G \frac{1}{\ell(\beta_1)\cdots\ell(\beta_h)},
 \end{equation*}%
with $m = \frac{134}{\varepsilon}$. Then for given $R > G$ there are at most $\mathcal{N}(R)$ model strands in $D$.
 \end{lemma}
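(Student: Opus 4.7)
The plan is to parameterize each model strand by the combinatorial data used to build it in Section~\ref{sec:modelstrands} and to count each piece separately. Concretely, every model strand $m^c$ in $D$ is uniquely determined, as the geodesic representative of the homotopy class of $\breve{c}_0\,\overfrown{P}(\breve{c})\,\breve{c}_{2J+2}$, by three pieces of data: the simple combinatorial path $P(\breve{c})$ on the $1$-skeleton of $\V$; the $h$-tuple of winding numbers $(\tau_1,\dots,\tau_h) \in \mathbb{Z}^h$; and the choice of terminal segment at each endpoint when the corresponding end of $\bar{c}$ lies in a dotted collar or cusp neighborhood.

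With this parameterization in hand, I would count each ingredient using the preceding lemmas. Since $\ell(\bar{c}) = 2R - 4\varepsilon$, Lemma~\ref{lem:edgetwist} gives simultaneously $\breve{L} \leq L_R := 48R/\varepsilon$ and $\sum_k \ell(\beta_k)|\tau_k| \leq 2R$. Lemma~\ref{lem:combcount} together with Corollary~\ref{cor:edges2} then bounds the number of combinatorial paths by $4L_R^2\binom{L_R+E}{L_R}$ with $E \leq G - 10(g-1+n/2)$. A simplex-volume estimate for the winding tuples yields at most $\prod_k(1 + 4R/\ell(\beta_k)) \leq (5R)^h/\sigma$, which delivers the $1/\sigma$ factor appearing in $\mathcal{N}(R)$. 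Finally, Lemma~\ref{lem:ModelEnd} bounds the number of terminal segments at each endpoint by $\frac{8(R+4)}{\ell(\beta_k)}$ when it lies on $\partial \C''_k$, and by $1$ on the boundary of a dotted cusp neighborhood or when no terminal arc is present.

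In the last step I would multiply these three bounds and follow the Stirling chain \eqref{eq:bdLGG}--\eqref{eq:defNR} of Section~\ref{sec:finaliz}, which applied to the path count alone already produces the clean shape $\frac{1}{C}(mR/G)^G$ with $m = 134/\varepsilon$. The winding factor $(5R)^h/\sigma$ then inserts the $1/\sigma$, and the (at most two) terminal factors contribute an additional polynomial of degree at most two in $R$ divided by at most two of the $\ell(\beta_k)$. All of these corrections must be absorbed into the Stirling slack: the passage from exponent $E$ to exponent $G$ in \eqref{eq:bdLGG} enlarges the bound by a factor of order $(L_R/G)^{10(g-1+n/2)}$, and since $R > G$ and $10(g-1+n/2) > h+2$ for every hyperbolic topological type there is ample room to swallow both the additional $R^{h+2}$ and the handful of $1/\ell(\beta_k)$ factors without enlarging $m$. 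Fine-tuning the constants yields exactly $\mathcal{N}(R) = \frac{1}{10}\frac{m^G R^G}{G^G \sigma}$.

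The main technical obstacle is precisely this bookkeeping: in particular one has to check that the one or two ``extra'' $1/\ell(\beta_k)$ factors coming from collar terminal segments do not conspire to produce a $1/\ell(\beta_k)^2$ and so disrupt the clean $1/\sigma$ denominator. This is where the assumption $R > G$ is crucial, since it forces $L_R \gg G$ and thus guarantees enough Stirling slack to absorb every polynomial and $\ell(\beta_k)^{-1}$ correction while still matching the leading constant $\frac{1}{10}$ in $\mathcal{N}(R)$.
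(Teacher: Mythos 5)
Your parameterization of a model strand by the triple (combinatorial path, winding numbers, terminal segments) is the same one the paper uses, and the counts you quote for each ingredient are the right ones. The gap is in your last step: you claim that the one or two extra factors of $\frac{8(R+4)}{\ell(\beta_k)}$ coming from terminal segments in collars can be ``absorbed into the Stirling slack'' guaranteed by $R>G$. That cannot work. The slack you identify is a function of $R$, $G$ and $\varepsilon$ only, whereas $\ell(\beta_k)$ has no lower bound in this lemma; if the product of your three counts ever contains $\ell(\beta_k)^{-2}$ for some $k$, no quantity independent of $\ell(\beta_k)$ can compress it back to the single power of $\ell(\beta_k)^{-1}$ appearing in $\mathcal{N}(R)$. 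So the ``main technical obstacle'' you name is real, and it is not resolved by largeness of $R$; it requires a structural case analysis. The paper's resolution is: (a) if the endpoint of $\bar{c}$ lies in a dotted collar $\C''_k$ that \emph{does} contain traversing arcs, then the terminal segment must be homotopic (rel endpoints) to an arc disjoint from $\overfrown{P}(\breve{c})$, which pins its far endpoint down to at most $5$ white points — so the terminal factor is $5$, not $\ell(\beta_k)^{-1}$ times a polynomial; (b) if $\C''_k$ contains \emph{no} traversing arcs, then $\tau_k=0$, so the winding count contributes no $\ell(\beta_k)^{-1}$ for that $k$ and the terminal factor $\frac{8(R+4)}{\ell(\beta_k)}$ supplies the single allowed power; and (c) if both endpoints lie in the same such collar, the second terminal segment is again restricted to at most $5$ choices because both terminal arcs lie on one simple geodesic. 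Without (a)--(c) your argument does not establish the stated denominator $\ell(\beta_1)\cdots\ell(\beta_h)$.

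A secondary, quantitative concern: the paper counts the edge multiplicities $(n_1,\dots,n_V)$ and the winding numbers $(|\tau_1|,\dots,|\tau_h|)$ \emph{jointly}, via the single weighted inequality $\frac{\varepsilon}{24}\sum_i n_i+\sum_k\ell(\beta_k)|\tau_k|\leq 2R$ from Lemma~\ref{lem:edgetwist} and Corollary~\ref{cor:comblem}, obtaining a denominator $(V+h)!$. Your separate count (binomial for the edges times a simplex estimate $(5R)^h/\sigma$ for the windings) is larger by roughly $(V+h)!/V!\cdot(5/2)^h\leq (2.5\,G)^{h}$, and since $h$ can be as large as $3g-3+n$ while the available slack in passing from the natural constant to $m=134/\varepsilon$ is only a factor of order $(1.02)^{G}$, it is not clear this lands at the stated $\mathcal{N}(R)$ for $\varepsilon$ near $\frac13$. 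If you keep the separate count you must verify this constant explicitly; the safe route is the joint count through Corollary~\ref{cor:comblem}.
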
%
 \begin{proof}%
We first count how many model strands arise from extensions $\bar{c}$ that have both endpoints outside the dotted collars. We shall say that these curves belong to the \emph{first category}. For any of them the model strand $m^c$ is uniquely determined by the following three sets of data: the winding numbers $\tau_1, \dots, \tau_h$, the numbers $n_1, \dots, n_V$ of path edges of $P(\breve{c})$ on each of the $V$ edges of the cell decomposition $\V$ (which we assume enumerated from $1$ to $V$ in some way) and the selection of the initial and end path edge. By Lemma~\ref{lem:edgetwist} we have, noting that $\breve{L} = n_1+\dots+n_V$, and $\ell(\bar{c}) \leq 2R - 4 \varepsilon$,
 \begin{equation}\label{eq:ineqn1nV}%
\frac{\varepsilon}{24}(n_1+\dots+n_V) + \sum_{k=1}^{h}\ell(\beta_k) \vert\tau_k\vert \leq 2R.
 \end{equation}%
The number of model strands in the present case is thus bounded above by $4\breve{L}^2 2^h$ times the number of strings $(n_1, \dots, n_V, \vert \tau_1\vert, \dots, \vert \tau_h \vert)$ satisfying inequality \eqref{eq:ineqn1nV}. By Corollary~\ref{cor:comblem} this number is bounded above by

 \begin{equation}\label{eq:numbstring}%
B := \frac{1}{(V+h)! \left(\frac{\varepsilon}{24}\right)^V \ell(\beta_1) \cdots \ell(\beta_h)}(2R + \gamma)^{V+h},
 \end{equation}%
where $\gamma= \frac{\varepsilon}{24} V+ \ell(\beta_1) + \dots + \ell(\beta_h)$. Hence, the upper bound $4\breve{L}^2 2^h B$ for the number of model strands with endpoints outside the dotted collars. 

We also note using \eqref{eq:hleq3gn} and Corollary~\ref{cor:edges2} that $h \leq 3(g-1+\frac{n}{2})$ and hence,
 \begin{equation}\label{eq:boundgamma}%
V+h\leq G-5, \qquad \gamma < \frac{1}{50} R.
 \end{equation}%
We extend our first category of curves by allowing, in addition, the endpoints to lie in dotted collars that contain traversing arcs. For this extended category we have the previous bound multiplied by the number of possible choices of the terminal segments that are attached to $\overfrown{P}(\breve{c})$. Any such segment must be homotopic with fixed endpoints to a curve that does not intersects $\overfrown{P}(\breve{c})$ except at the endpoints. One of the latter is the attachement to $\overfrown{P}(\breve{c})$, the other is among the four white points on the opposite boundary component of the corresponding dotted collar. Hence, at either end of $\overfrown{P}(\breve{c})$ there are at most 5 possible choices for the terminal segment. This yields the bound $100\breve{L}^2 2^h B$ for the number of curves in the extended first category.

The \emph{second category} consists of the cases where exactly one endpoint of $\bar{c}$ lies in a dotted collar that contains \emph{no} traversing arcs or where both endpoints lie in the same dotted collar that contains no traversing arcs. We shall count the arising model strands for the cases for which this collar is $\C''_k$ and then take the sum for $k=1,\dots,h$. Now for $\bar{c}$ with initial point in $\C''_k$ (and possibly the end point also but without traversing arcs in $\C''_k$) the attachment point for the terminal segment that is glued to the beginning of $\overfrown{P}(\breve{c})$ is uniquely determined by the sequence $(n_1,\dots,n_V)$, but the previous bound of 5 for the possible directions is now replaced by $\frac{8}{\ell(\beta_k)}(R+4)$ (Lemma~\ref{lem:ModelEnd}). At the same time, since \CG{$\tau_k = 0$}, inequality \eqref{eq:ineqn1nV} is now replaced by $\frac{\varepsilon}{24}(n_1+\dots+n_V) + \sum_{j=1,j\neq k}^{h}\ell(\beta_k) \vert\tau_k\vert \leq 2R$, and Corollary~\ref{cor:comblem} yields the bound
 \begin{equation*}
B' := \frac{\ell(\beta_k)}{(V+h-1)! \left(\frac{\varepsilon}{24}\right)^{V} \ell(\beta_1) \cdots \ell(\beta_{h})}(2R + \gamma)^{V+h-1}.
 \end{equation*}%
for the number of strings that satisfy it. If the second endpoint of $\bar{c}$ happens to lie in $\C''_k$ also, then both attachment points are determined by $(n_1, \dots, n_V)$ but for given direction of the terminal segment at the beginning there are at most 5 directions for the terminal segment at the end, owing to the fact that both terminal arcs of $\bar{c}$ in $\C''_k$ lie on the same simple geodesic. It follows that for $\C''_k$ there are at most $10\breve{L} 2^{h-1} B' \frac{8}{\ell(\beta_k)}(R+4) \leq 40 \breve{L} 2^h (V+h) B$ possible cases, and summing up for $k = 1, \dots, h$ we get the upper bound $40 h \breve{L} 2^h (V+h) B$ for the number of model strands arising from curves in category 2.

The \emph{third} and final category consists of the cases where the two endpoints of $\bar{c}$ lie in distinct dotted collars that both contain no traversing arcs. A similar argument as before shows that there are at most $8h^2 2^h (V+h)(V+h-1)B$ model strands arising from this last category.

Let now $N$ be the sum of the bounds for the three categories. By Lemma~\ref{lem:edgetwist} and since $\ell(\bar{c}) \leq 2R-4$ (Section~\ref{sec:travend}) we have $\breve{L} \leq \frac{48}{\varepsilon} R$ \eqref{eq:boundgamma}. Furthermore, by the hypothesis of the lemma, $G < R$ and therefore $V+h < R$. Hence, allowing rough estimates at this point,
 \begin{equation*}%
N \leq 100\left( \frac{48}{\varepsilon}\right)^2 R^2 \{1 + \frac{1}{10} h + \frac{1}{100}h^2 \} 2^h B \leq 100\left( \frac{48}{\varepsilon}\right)^2 R^2 3^h B. 
 \end{equation*}%
Applying to $B$ that $\gamma < \frac{1}{50}R$ \eqref{eq:boundgamma} and using that $x! \geq x^x e^{-x}$ for $x > 0$ we get %
 \begin{equation*}%
B \leq \frac{1}{\ell(\beta_1)\cdots \ell(\beta_h)} \left(\frac{24}{\varepsilon}\right)^V\left(2+\frac{1}{50}\right)^{V+h}\left(\frac{e R}{V+h}\right)^{V+h}.
 \end{equation*}%
Since the function $x \to (e R/x)^x$ is monotone increasing for $x \in [1, R]$ and, by \eqref{eq:boundgamma}, $V+h \leq G-2 < R$, the last factor has the bound $(eR/(V+h))^{V+h}\leq (eR/(G-2))^{G-2}$. The bound in the lemma now follows by elementary simplification using that $G>400$.
\end{proof}%
\subsection{Finalizing the proof}\label{sec:finaliz2} 
Theorem~\ref{thm:lq2} is now proved by the same argument as in the proof of Theorem~\ref{thm:lq1}: the given disc $B_{\rho}$ has area $\geq \pi \rho^2$; for any model strand $m_c$ in $B_{\rho}$ the part $m_c^{2w_R}$ of the tubular neighborhood of radius $2w_R$ that lies in the disc has area $< 9\, \rho\, w_R$ (see \eqref{eq:areamcwR}), where $w_R = 3e^{-R}$. By Lemma~\ref{lem:numbermc} there are at most $\mathcal{N}(R)$ such strands and by Lemma~\ref{lem:distcmc} the tubular neighborhoods of radius $w_R$ around them contain the Birman-Series set $\cap B_{\rho}$. We thus get a point $p$ in $B_{\rho}$ at distance $\geq w_R$ from it if we set $R$ to be a solution $\geq G$ to the equation 
 \begin{equation*}%
\frac{1}{\rho \, \sigma} \, \frac{m^G}{G^G} R^G = e^R.
 \end{equation*}%
This is \eqref{eq:theequation} with $\rho$ replaced by $\rho \, \sigma$. By Lemma~\ref{lem:solutionbound}
this solution satisfies
 \begin{equation}\label{eq:boundRzero}%
R \leq 2 \log(\frac{1}{\rho\, \sigma}) + 2G \log(m) 
 \end{equation}%
and, analogously to \eqref{eq.final bound}, we get the bound
$w_R \geq 3\rho^2\sigma^2 e^{-2G \log{m}}$ with $G$ and $m$ as in Lemma~\ref{lem:numbermc}. This completes the proof.\hfill \qedsymbol

\section{Systole independent bounds}\label{sec:sysindep}
From a geometric point of view collars around very small geodesics are similar to pairs of cusps. This suggests that there should also exist a version of Theorem~\ref{thm:lq1} with bounds that are independent of the systole. Here we show that this is indeed possible. However, the constants arising from our approach become extremely small.

The main idea is that if the width of a collar is sufficiently large with respect to $R$ then it cannot contain traversing arcs and, furthermore, the number of possible directions of terminal segments (c.f.\ Lemma~\ref{lem:ModelEnd}) at any white point on the boundary is just equal to 1 as in the case of a cusp.

In Figure~\ref{fig:WindingArc} we calculate how much larger than $R$ the width must be. The figure depicts again part of the universal covering of the surface $S$ and is in correspondence with the earlier Figure~\ref{fig:EndArc}. The distance from the white point $\tilde{q}$ to the lift $\tilde{\beta}_k$ of the small geodesic $\beta_k$ is equal to $R+H$ with $H$ to be determined and $\sigma_k = \sfrac{1}{4} \ell(\beta_k)$. The geodesic ray $\eta$ issued at $\tilde{q}$ is asymptotic to $\tilde{\beta}_k$ and forms an angle $\theta$ with the vertical geodesic from $\tilde{q}$ orthogonally to $\tilde{\beta}_k$. We now determine $H$ in such a way that $\eta$ contains the hypothenuse of a geodesic triangle $\tilde{q} CD$ with right angle at $C$ and small sides $R$ and $\varepsilon$. The latter is almost identical with the arc $CD$ from $C$ to $\eta$ on the cricle of radius $R$ centered at $\tilde{q}$. Speaking in terms of Figure~\ref{fig:EndArc} we have $B=C$ and there is only one white point on the curve $BCD$ (and its symmetric image across $\tilde{q}C$) and thus only one choice for the direction of a terminal segment at $q$ in $S$. The necessary distance $H$ is determined by the following triangle formulas ({\bf formula \cite[2.2.2(iv)]{BuserBook}})
 \begin{equation}\label{eq:determineH}%
\sinh(R) = \tanh(\varepsilon) \cot(\theta), \quad \sinh(R+H) = \cot(\theta).
 \end{equation}%
By the formula for the grey shaded trirectangle already used prior to \eqref{eq:lamlam} (with $\omega'_k = R+H$) we have $\cosh(R+H) = \tanh(\sfrac{\varepsilon}{2}) \coth(\sfrac{\sigma_k}{2}) $. For $H$ determined by \eqref{eq:determineH} the length of $\beta_k$, with negligible error, is equal to $16 \tanh(\sfrac{\varepsilon}{2})\tanh(\varepsilon) e^{-R}$. Thus, if we define%
 \begin{equation}\label{eq:betacondition}%
\mathcal{L}(R) = 4 \varepsilon^2 e^{-R}
 \end{equation}%
then for given $R$ any collar $\C''_k$ with $\ell(\beta_k) \leq \mathcal{L}(R)$ may be dealt with as though it was a pair of cusps.

\begin{figure}[t]
\leavevmode \SetLabels
(0.45*1.03) $\tilde{\beta}_k$\\
(0.244*.10) $\sfrac{\varepsilon}{2}$\\
(0.315*.788) $\varepsilon$\\
(0.29*.018) $\tilde{q}$\\
(0.265*.696) $C$\\
(0.353*.696) $D$\\
(0.265*.45) $R$\\
(0.265*.85) $H$\\
(0.60*.92) $\eta$\\
(0.335*.31) $\theta$\\
(0.244*1.03) $\sfrac{\sigma_k}{2}$\\
\endSetLabels
\begin{center}
\AffixLabels{\centerline{\epsfig{file =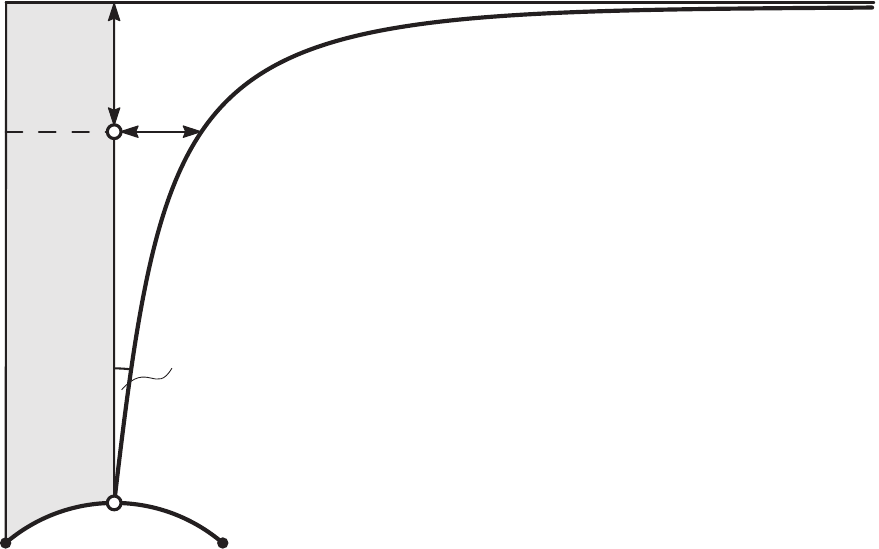,angle=0}}}
\vspace{-24pt}
\end{center}
\caption{\label{fig:WindingArc}Asymptotic geodesic depicted in the universal covering}\end{figure}
We now start an iteration beginning by setting
 \begin{equation*}%
\ell_0 = 1, \quad R_0 = 2\log(\frac{1}{\rho}) + 2G \log(m),
 \end{equation*}%
where $\rho$, $G$, $m$ are as in Theorem~\ref{thm:lq2} and Lemma~\ref{lem:numbermc}. The value for $R_0$ stems from \eqref{eq:boundRzero} and is the bound for $R$ in the proof of Theorem~\ref{thm:lq2} that holds if $h = 0$, i.e.,\ in the case where $S$ has no small geodesics. By what we have said above, this bound is also valid if all small geodesics on $S$ have lengths $\leq \mathcal{L}(R_0)$, and so under this weaker hypothesis we still have the lower bound $3 e^{-R_0}$ for the largest distance to the Birman-Series set in the disc $B_{\rho}$.

In the first iteration step we set $\ell_1 = \mathcal{L}(R_0)$ and set $R_1$ equal to the right hand side of \eqref{eq:boundRzero} for the special case $\sigma = \ell_1$. This is then the bound for $R$ in the proof of Theorem~\ref{thm:lq2} that holds if $h = 1$, and the same bound is valid in the more general case where all small geodesics different from $\beta_1$ have lengths $\leq \mathcal{L}(R_1)$. In this way we continue getting two sequences sequence $\ell_0, \ell_1, \ell_2, \dots$ and $R_0, R_1, R_2, \dots$, with the iteration scheme

\medskip

\hspace{2em}$\ell_k = \mathcal{L}(R_{k-1}) = 4 \varepsilon^2 e^{-R_{k-1}}$

\hspace{2em}$ R_k = 2 \log{\frac{1}{\rho}} + 2 \log\frac{1}{\ell_1} + \cdots + 2 \log\frac{1}{\ell_k} + 2G\, \log(m)$

\medskip
and at each step the result that some point in $B_{\rho}$ has distance $\geq w_{R_k} = 3 e^{-R_k}$ to the Birman-Series set provided that on $S$ all small geodesics different from $\beta_1, \dots, \beta_k$ are shorter than $\mathcal{L}(R_k)$. Now the recursion for $R_k$ is equivalent to
\medskip

\hspace{2em} $R_k = 3R_{k-1} + 2 \log \frac {1}{4\varepsilon^2}$

\medskip
and $R_k$ has the closed form
\medskip

\hspace{2em} $R_k = 3^k R_0 +(3^k-1) \log\frac{1}{4\varepsilon^2}.$

\medskip
But the iteration stops at $k = 3g-3+n$ at the latest because this is the maximal possible number of small geodesics for $S$ of genus $g$ with $n$ cusps. Hence we have the following.

\begin{theorem}\label{thm:lq3} Let $S$ be a hyperbolic surface of genus $g$ with $n$ cusps and $B_{\rho}$ a disk of radius $\rho$ in the $\varepsilon$-thick part of $S$, where $0 < \rho < \varepsilon \leq \sfrac{1}{3}$. Then there exists a point $p \in B_{\rho}$ such that
\begin{equation*}
\dist(p,\BS) \geq e^{-3^\kappa R},
\end{equation*}
where $\kappa = 3g-3+n$ and $R = 2 \log \frac{1}{\rho} + M(g-1+\sfrac{n}{2})$ with $M=\frac{195}{\varepsilon^2}\log \frac{134}{\varepsilon}$.
\end{theorem}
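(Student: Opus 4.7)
The plan is to bootstrap Theorem~\ref{thm:lq2} by iterating a cusp-like absorption of very short geodesics. The guiding observation is that if $\beta_k$ is short enough relative to $R$, then its collar $\C''_k$ is so deep that a terminal segment emanating from a white point has only one admissible direction---exactly as in a cusp neighborhood. My first step would be to make this quantitative via Figure~\ref{fig:WindingArc}: working with the ideal-triangle relations $\sinh R = \tanh\varepsilon \cot\theta$ and $\sinh(R+H) = \cot\theta$, together with the trirectangle formula $\cosh(R+H) = \tanh(\varepsilon/2)\coth(\sigma_k/2)$, one isolates the threshold
\begin{equation*}
\mathcal{L}(R) = 4\varepsilon^2 e^{-R}
\end{equation*}
with the property that any $\beta_k$ satisfying $\ell(\beta_k) \leq \mathcal{L}(R)$ can be treated as a pair of cusps in the counting and distance estimates of Section~\ref{sec:smallcusps}. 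In particular, its length factor then drops out of the product $\sigma$ that appears in Theorem~\ref{thm:lq2}.

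Next I would set up an iteration $(\ell_k, R_k)_{k\geq 0}$ driven by this threshold rule and the bound from Theorem~\ref{thm:lq2}. I start with $\ell_0 = 1$ and $R_0 = 2\log(1/\rho) + 2G\log m$, which is precisely the bound one would obtain if $S$ had no small geodesics at all. If every small geodesic satisfies $\ell(\beta) \leq \mathcal{L}(R_0)$, all of them are absorbed as cusps and we already have the distance bound $3e^{-R_0}$. Otherwise some $\beta_1$ exceeds $\mathcal{L}(R_0)$; I set $\ell_1 = \mathcal{L}(R_0)$, apply Theorem~\ref{thm:lq2} with this $\ell_1$ playing the role of $\sigma$, and obtain $R_1 = 2\log(1/\rho) + 2\log(1/\ell_1) + 2G\log m$. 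The iteration continues: at stage $k$, either all remaining small geodesics have length $\leq \mathcal{L}(R_k)$, in which case we stop, or I pick a new $\beta_{k+1}$ exceeding $\mathcal{L}(R_k)$ and recurse.

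Since the simple closed geodesics $\beta_1,\ldots,\beta_k$ collected along the way are disjoint, the process must halt by stage $k = \kappa := 3g-3+n$. Substituting $\ell_k = 4\varepsilon^2 e^{-R_{k-1}}$ into the update rule yields the recursion
\begin{equation*}
R_k = 3R_{k-1} + 2\log\tfrac{1}{4\varepsilon^2},
\end{equation*}
which has the closed form $R_k = 3^k R_0 + (3^k-1)\log(1/(4\varepsilon^2))$. Absorbing the additive term into a mildly enlarged constant in $M$ (this is where the bookkeeping bumps $194/\varepsilon^2$ up to $195/\varepsilon^2$) produces the estimate $R_\kappa \leq 3^\kappa R$ with $R$ as in the statement, and the point $p \in B_\rho$ at distance $\geq 3e^{-R_\kappa}$ from $\BS$ delivers the claimed inequality.

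The main obstacle I anticipate is verifying that reclassifying short $\beta_k$ as ``essentially cusps'' genuinely preserves all the structural estimates of Section~\ref{sec:smallcusps}: one must check that the associated special Voronoi cells degenerate cleanly toward their cusp analogues (one side replaced by a point at infinity), that Lemma~\ref{lem:ModelEnd} really collapses to a single direction at each white point precisely when $\ell(\beta_k) \leq \mathcal{L}(R)$, and that the counting in Lemma~\ref{lem:numbermc} remains valid when the product $\sigma = \ell(\beta_1)\cdots\ell(\beta_h)$ is truncated to include only those $\beta_j$ still exceeding the current threshold $\mathcal{L}(R_k)$. Once these checks go through, the remaining work is the straightforward exponential bookkeeping that converts $R_\kappa$ into the triple exponent $3^\kappa$ appearing in the final bound.
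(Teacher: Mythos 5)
Your proposal follows essentially the same route as the paper: the same threshold $\mathcal{L}(R)=4\varepsilon^2 e^{-R}$ derived from the asymptotic-ray/trirectangle computation, the same iteration $(\ell_k,R_k)$ with $R_k = 3R_{k-1}+2\log\frac{1}{4\varepsilon^2}$, termination at $\kappa=3g-3+n$ via the bound on the number of disjoint small geodesics, and the absorption of the additive constant into the bump from $\frac{194}{\varepsilon^2}$ to $\frac{195}{\varepsilon^2}$. The argument is correct as outlined.
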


\appendix
{\sc Appendix A:} {\it Hyperbolic triangles}
\setcounter{section}{1}
\setcounter{theorem}{0}

For the convenience of the reader we gather a number of properties of hyperbolic triangles inscribed in a circle that are certainly well known but not easily accesible in the literature.
 \begin{lemma}\label{lem:MinimalAngle}%
Consider, for given $\eps > 0$, a hyperbolic triangle with sides of lengths $\geq \eps$ inscribed in a circle of radius $\leq \eps$. Then all angles are bounded from below by $\varphi_{\eps}$, where
 \begin{equation}\label{eq:XXXX}%
\cot{\frac{\varphi_{\eps}}{2}} = \cosh(\eps) \cdot\left \{ \sqrt{1 + 2 \cosh{\eps}} + \sqrt{2 + 2 \cosh{\eps}} \right\}.
 \end{equation}%
\end{lemma}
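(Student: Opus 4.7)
The plan is to parametrize hyperbolic triangles inscribed in a circle of radius $R$ by the half--arcs $\alpha_a,\alpha_b,\alpha_c$ subtended at the circumcenter $O$ by the three sides (so that $\alpha_a+\alpha_b+\alpha_c=\pi$), and to decompose each vertex angle as a sum of base angles of the isoceles triangles $OAB$, $OBC$, $OCA$. Dropping the perpendicular from $O$ to a side gives $\sinh(\mathrm{side}/2)=\sinh R\sin\alpha$, while {\bf formula \cite[2.3.1]{BuserBook}} applied inside the same isoceles triangle yields $\cos\beta=\coth R\tanh(\mathrm{side}/2)$ for its base angle $\beta$. Combining these produces the key identity
\[
 \cot\beta=\cosh R\,\tan\alpha.
\]
When $O$ lies inside the triangle, the angle at $A$ splits as $\angle A=\beta_b+\beta_c=f(\alpha_b)+f(\alpha_c)$ with $f(\alpha):=\mathrm{arccot}(\cosh R\tan\alpha)$.

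Next I would locate the extremum by a sequence of reductions. Since $f$ is strictly decreasing in $R$ for fixed $\alpha$, every vertex angle of the triangle is a decreasing function of $R$ while the $\alpha_i$ are held fixed; hence the infimum of the smallest angle is attained on the boundary $R=\varepsilon$. Setting $R=\varepsilon$, the condition that every side be at least $\varepsilon$ translates to $\sin\alpha_i\geq \sinh(\varepsilon/2)/\sinh\varepsilon = 1/(2\cosh(\varepsilon/2))$, and I introduce $\alpha_0:=\arcsin(1/(2\cosh(\varepsilon/2)))$. The smallest angle $\angle A$ lies opposite the shortest side, and pushing that side down to its lower bound (i.e.\ taking $\alpha_a=\alpha_0$) only decreases $\angle A=f(\alpha_b)+f(\alpha_c)$, because the sum $\alpha_b+\alpha_c=\pi-\alpha_a$ grows and $f$ is decreasing. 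A direct computation gives
\[
 f''(\alpha)=\frac{\cosh\varepsilon\,\sinh^2\!\varepsilon\,\sin(2\alpha)}{\bigl(1+\sinh^2\!\varepsilon\,\sin^2\alpha\bigr)^2}>0\quad\text{on }(0,\pi/2),
\]
so $f$ is strictly convex there and Jensen's inequality forces $f(\alpha_b)+f(\alpha_c)$ to be minimized at the symmetric point $\alpha_b=\alpha_c=(\pi-\alpha_0)/2$. The extremal triangle is therefore isoceles with $a=\varepsilon$, $b=c$, $R=\varepsilon$, and one checks that all three $\alpha_i$ stay in $(0,\pi/2)$ so that $O$ really is inside.

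In this extremal triangle $\varphi_\varepsilon=\angle A=2f((\pi-\alpha_0)/2)$, hence
\[
 \cot(\varphi_\varepsilon/2)=\cosh\varepsilon\,\tan\!\Bigl(\tfrac{\pi-\alpha_0}{2}\Bigr)=\cosh\varepsilon\,\cot(\alpha_0/2).
\]
Using the half--angle identity $\cot(\alpha_0/2)=(1+\cos\alpha_0)/\sin\alpha_0$ together with $\sin\alpha_0=1/(2\cosh(\varepsilon/2))$ and $\cos\alpha_0=\sqrt{1+2\cosh\varepsilon}/(2\cosh(\varepsilon/2))$ (from the elementary identity $4\cosh^2(\varepsilon/2)-1=1+2\cosh\varepsilon$), I obtain
\[
 \cot(\alpha_0/2)=2\cosh(\varepsilon/2)+\sqrt{1+2\cosh\varepsilon}=\sqrt{2+2\cosh\varepsilon}+\sqrt{1+2\cosh\varepsilon},
\]
which yields the stated formula. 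The main technical obstacle is the ``obtuse'' regime where some $\alpha_i$ exceeds $\pi/2$, so that $O$ lies outside the triangle and the splitting $\angle A=\beta_b+\beta_c$ must be replaced by a signed version; I would handle this by continuity at the right--angled boundary $\alpha_i=\pi/2$ combined with a direct comparison, via the hyperbolic law of cosines, showing that no such obtuse configuration can beat the symmetric acute minimum found above.
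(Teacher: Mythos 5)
Your route is genuinely different from the paper's: you parametrize inscribed triangles by the central half-angles $\alpha_a+\alpha_b+\alpha_c=\pi$, reduce to the boundary $R=\eps$ by monotonicity of $f_R(\alpha)=\arccot(\cosh R\tan\alpha)$ in $R$, and then symmetrize by convexity of $f$ on $(0,\pi/2)$ plus Jensen; the paper instead proves a separate isosceles lemma (valid for \emph{all} positions of the moving vertex, via an explicit formula for $\cot\gamma(t)$ with oriented angles) and then performs two monotone geometric deformations. In the regime where the circumcenter $O$ lies inside the triangle your argument is correct: the identity $\cot\beta=\cosh R\tan\alpha$, the constraint $\sin\alpha_i\ge 1/(2\cosh(\eps/2))$, the second-derivative computation, and the closed-form evaluation $\cot(\varphi_\eps/2)=\cosh\eps\,\cot(\alpha_0/2)=\cosh\eps\,(\sqrt{1+2\cosh\eps}+\sqrt{2+2\cosh\eps})$ all check out.

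The genuine gap is the obtuse regime, which you flag but do not close, and it is not confined to the signed splitting of the vertex angles. Configurations with some $\alpha_i>\pi/2$ are admissible under your constraints (e.g.\ $\alpha_a=2\pi/3$, $\alpha_b=\alpha_c=\pi/6$ satisfies $\sin\alpha_i\ge 1/(2\cosh(\eps/2))$ for every $\eps$), so they must be handled, and two of your reduction steps silently assume acuteness. First, the reduction to $R=\eps$: with the signed convention $F_R(\alpha)=\arccot(\cosh R\tan\alpha)\in(-\pi/2,\pi/2)$ one has
\begin{equation*}
\partial_R F_R(\alpha)=-\frac{\sinh R\,\tan\alpha}{1+\cosh^2\!R\,\tan^2\alpha},
\end{equation*}
which changes sign at $\alpha=\pi/2$; for an obtuse $\alpha_a$ the two terms of $\angle B=F_R(\alpha_a)+F_R(\alpha_c)$ move in opposite directions as $R$ grows (e.g.\ near $\tan\alpha_a=-1/\cosh R$ the angle actually \emph{increases} with $R$), so "push $R$ up to $\eps$" is not a valid first move there. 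Second, $F$ is concave, not convex, on $(\pi/2,\pi)$, so Jensen does not apply directly. The statement is still true — at each fixed $R$ the obtuse minimum is attained at $\alpha_a=\pi/2$ where $F_R(\pi/2)=0$ and convexity on $(0,\pi/2)$ gives $F_R(\pi/2-\alpha_0(R))\ge 2F_R((\pi-\alpha_0(R))/2)\ge\varphi_\eps$ — but this requires redoing the one-variable minimization for every $R\le\eps$ rather than the order of reductions you propose. You should either carry out that boundary analysis explicitly, or adopt the paper's device of oriented angles, for which the decomposition and the formula $\cosh R=\cot\beta\cot(t)$ remain valid verbatim when $O$ is outside the triangle.
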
%
For the proof we shall show that the smallest possible angle is $\varphi_{\eps}$ and is achieved in the case as shown on the left hand side in Figure~\ref{Fig:InscribedTriangles}, where we have an isosceles triangle $ABC$ with base $AB$ of length $\eps$ inscribed in a circle of center $O$ and radius $\eps$. An intermediate step in the proof is the following property that does not hold in Euclidean geometry.

 \begin{figure}[ht]
 \vspace{0pt}
 \begin{center}
 \leavevmode
 \SetLabels
 (0.10*-0.01) $A$\\
 (0.33*-0.01) $B$\\
 (0.215*1.02) $C$\\
 (0.2*0.52) $O$\\
 (0.759*0.477) $O$\\
 (0.215*0.115) $\eps$\\
 (0.227*0.65) $\eps$\\
 (0.168*0.21) $\eps$\\
 (0.258*0.21) $\eps$\\
 (0.285*0.83) $\frac12 \varphi_{\eps}$\\
 (0.215*0.305) $\psi_{\eps}$\\
 (0.85*0.46) $t$\\
 (0.78*0.4) $\zeta$\\
 (0.59*0.18) $A$\\
 (0.975*0.18) $B$\\
 (0.895*0.97) $C$\\
 (0.868*0.755) $\beta$\\
 (0.76*0.73) $\alpha$\\
 (0.7*0.33) $\rho$\\
 (0.865*0.33) $\rho$\\
 (0.835*0.65) $\rho$\\
 (0.783*0.60) $s$\\
 (0.29*0.51) $M$\\
 (0.94*0.58) $M$\\
 \endSetLabels
 \AffixLabels{
 \includegraphics{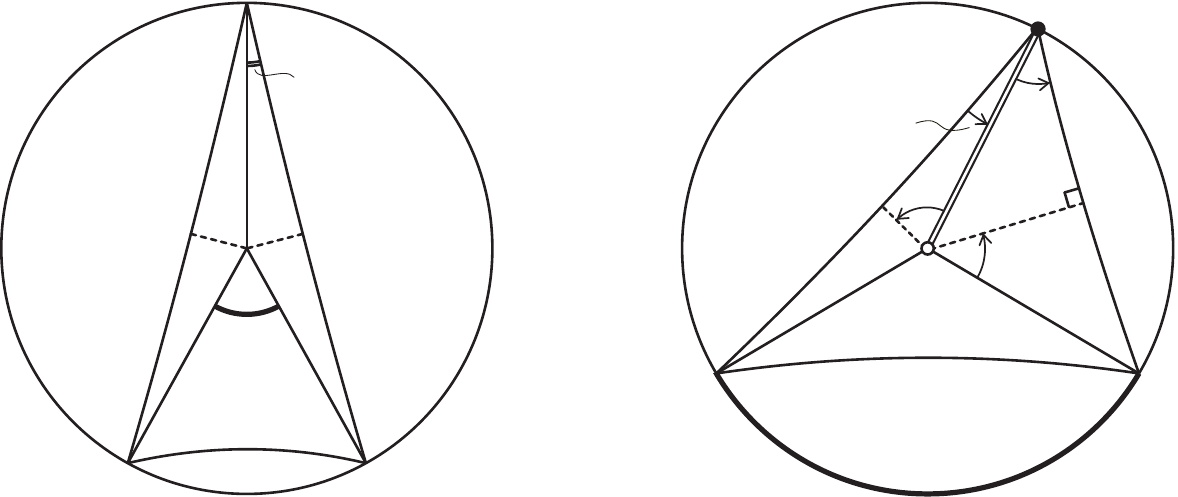} }
 \end{center}
 \vspace{-0pt}
 \caption{\label{Fig:InscribedTriangles} Two different triangles}
 \end{figure}

 \begin{lemma}\label{lem:Isosceles}%
Let $ABC$ be an arbitrary hyperbolic triangle inscribed in some circle of radius $\rho > 0$. If we allow $C$ to move on the circle without crossing $A$ or $B$, then the angle $\gamma$ at $C$ becomes minimal if and only if $\dist(A,C) = \dist(B,C)$.
 \end{lemma}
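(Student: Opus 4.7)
The plan is to derive an explicit closed-form expression for $\cot\gamma$ as an affine, strictly increasing function of $\cos\phi$, where $\phi$ denotes the signed central angle from $C$ to the midpoint of the arc on which $C$ lies, and then conclude by monotonicity of $\cot$ on $(0,\pi)$.

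Fix the center $O$, write $2\alpha = \angle AOB$ (determined by $|AB|$ and $\rho$), and parametrize $C$ on its arc by the signed angle $\phi$ from the midpoint of that arc, so that the isosceles position $|CA|=|CB|$ corresponds exactly to $\phi = 0$. Applying the hyperbolic law of cosines to $\triangle OAC$ and $\triangle OBC$ yields the standard chord formula $\sinh(d/2) = \sinh\rho\,|\sin(\theta/2)|$ for two points on the circle at central angular distance $\theta$. Setting $u = \sinh(a/2)$, $v = \sinh(b/2)$, $w = \sinh(c/2) = \sinh\rho\,\sin\alpha$ with $a = |BC|$, $b = |AC|$, $c = |AB|$, a short sum-to-product computation (using $\cos^2 x + \cos^2 y = 1 + \cos(x+y)\cos(y-x)$ and its sine analogue) gives
\begin{equation*}
uv = \tfrac{1}{2}\sinh^2\rho\,(\cos\phi + \epsilon\cos\alpha), \qquad u^2 + v^2 - w^2 = 2\epsilon\, uv\,\cos\alpha,
\end{equation*}
where $\epsilon = +1$ if $C$ lies on the major arc and $\epsilon = -1$ on the minor arc; the sign arises because the half-angles in the chord formula sit in the first quadrant in one case and the second in the other.

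Substituting into the hyperbolic law of cosines for $\triangle ABC$,
\begin{equation*}
\cos\gamma = \frac{(1+2u^2)(1+2v^2) - (1+2w^2)}{4\,uv\,\sqrt{(1+u^2)(1+v^2)}},
\end{equation*}
the numerator collapses to $4uv\,(uv + \epsilon\cos\alpha)$ and one obtains
\begin{equation*}
\cos\gamma = \frac{\epsilon\cos\alpha + uv}{\sqrt{(1+u^2)(1+v^2)}}.
\end{equation*}
A parallel expansion, reusing $u^2 + v^2 - w^2 = 2\epsilon\,uv\,\cos\alpha$, yields the key equality
\begin{equation*}
\sin^2\gamma = \frac{\sin^2\alpha\,\cosh^2\rho}{(1+u^2)(1+v^2)},
\end{equation*}
whose numerator is independent of $\phi$. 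Dividing the two produces the closed form
\begin{equation*}
\cot\gamma = \frac{\epsilon\cos\alpha\,\bigl(1 + \tfrac{1}{2}\sinh^2\rho\bigr) + \tfrac{1}{2}\sinh^2\rho\,\cos\phi}{\sin\alpha\,\cosh\rho}.
\end{equation*}

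The right-hand side is strictly increasing in $\cos\phi$, with positive coefficient $\tfrac{1}{2}\sinh^2\rho/(\sin\alpha\,\cosh\rho)$. On either arc the parameter $\phi$ ranges over a symmetric interval about $0$, so $\cos\phi$ attains its unique maximum at $\phi = 0$. Since $\cot$ is strictly decreasing on $(0,\pi)$, the angle $\gamma$ is minimized precisely at $\phi = 0$, that is, at the midpoint of the arc containing $C$, equivalently when $\dist(A,C) = \dist(B,C)$. The main obstacle is the algebraic simplification, in particular spotting the identity $u^2+v^2-w^2 = 2\epsilon\,uv\,\cos\alpha$ and carefully tracking the sign $\epsilon$ between the two arcs; once these are in hand the conclusion is immediate.
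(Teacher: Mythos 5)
Your proof is correct, and the computations check out: the identities $uv=\tfrac12\sinh^2\rho(\cos\phi+\epsilon\cos\alpha)$ and $u^2+v^2-w^2=2\epsilon uv\cos\alpha$ are right in both arc cases, the numerator of $\cos\gamma$ does collapse to $4uv(uv+\epsilon\cos\alpha)$, and your closed form for $\cot\gamma$ agrees exactly with the one in the paper after the change of variable $\cos\phi=-\cos(2t+\tfrac{\zeta}{2})$ relating your arc-midpoint parameter to the paper's. The route, however, is genuinely different. The paper never touches the side lengths of $ABC$: it drops perpendiculars from the center $O$ to the two sides meeting at $C$, writes $\gamma=\alpha+\beta$ with signed angles satisfying $\cosh\rho=\cot\beta\cot t$ and $\cosh\rho=\cot\alpha\cot s$, and then combines these by trigonometric identities; this is short but requires a small case analysis to justify the right-triangle formulas when one of the signed angles is nonpositive. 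You instead work entirely with the chord-length formula and the hyperbolic law of cosines in $\triangle ABC$ itself. That costs more algebra up front but buys two things: the sign bookkeeping is reduced to the single discrete parameter $\epsilon$ distinguishing the two arcs, and along the way you obtain the clean invariant $\sin\gamma\sqrt{(1+u^2)(1+v^2)}=\sin\alpha\cosh\rho$, a hyperbolic inscribed-angle/circumradius relation that is of independent interest and makes the $\phi$-independence of the "denominator" of $\cot\gamma$ transparent. Both proofs end the same way, reading off the extremum of an affine function of $\cos\phi$ (equivalently, of $\cos(2t+\tfrac{\zeta}{2})$) at the symmetric position.
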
%
 \begin{proof}%
We let $\zeta \in {}]0,2\pi{}[$ be the measure of the angular region between $OA$ and $OB$ into which $C$ is not allowed to enter. As a parameter for the position of $C$ we then take the angle $2t \in {}]0,2\pi - \zeta[{}$ from $OB$ to $OC$. The orthogonal line from $O$ to $BC$ decomposes the triangle $OBC$ into two isometric right-angled triangles $OMC$, $OMB$ with the oriented angle $\beta = \beta(t)$ at $C$, the orientation being from $CO$ to $CB$. 

In a similar way we let $2s \in {}]0,2\pi - \zeta[{}$ be the angle between $OC$ and $OA$. The orthogonal line from $O$ to $AC$ decomposes the triangle $OAC$ into two isometric right-angled triangles with oriented angle $\alpha = \alpha(s)$ at $C$, the orientation being from $CA$ to $CO$. With these orientation conventions we have
 \begin{equation*}
2t + 2s + \zeta = 2\pi
 \end{equation*}%
For the right-angled triangles the following formulas hold:
 \begin{equation*}
\cosh \rho = \cot{\beta} \cot(t); \quad \cosh \rho = \cot{\alpha} \cot(s).
 \end{equation*}%
Indeed, for the configuration as in Figure~\ref{Fig:InscribedTriangles}, where $s,t,\alpha,\beta \in {}]0,\frac{\pi}{2}[$ these are instances of {\bf formula \cite[2.2.2(ii)]{BuserBook}}, and using that $\cot(\pi-s) = - \cot(s)$ we easily see that the formulas remain valid in the cases where $\alpha \leq 0$ or $\beta \leq0$. One also may check that in all cases the angle $\gamma$ of triangle $ABC$ at $C$ satisfies
 \begin{equation*}
\gamma = \alpha + \beta.
 \end{equation*}%
A straightforward computation involving trigonometric identities (including $\tan(s) + \tan(t) = \frac{\sin(s+t)}{\cos(s)\cos(t)}$) now yields the following formula for $\gamma$ as a function of $t \in {}]0,\pi - \frac{\zeta}{2}[$\,,
 \begin{equation*}
\cot \gamma(t) = \cot \tfrac{\zeta}{2} \cdot \cosh \rho -\frac12 \frac{{\sinh}^2\rho}{\sin \tfrac{\zeta}{2}\cosh \rho}%
\left \{ \cos(2t + \tfrac{\zeta}{2} ) + \cos \tfrac{\zeta}{2} \right\}.
 \end{equation*}%
Thus, $\gamma$ becomes minimal when $2t = \pi - \frac{\zeta}{2}$.
\end{proof}%
\begin{proof}[Proof of Lemma~\ref{lem:MinimalAngle}]
By Lemma~\ref{lem:Isosceles} it suffices to look at triangles $ABC$ that are isosceles at $C$. If we move $A$, $B$ on the given circle towards each other until their distance is $\eps$ the angle $\gamma$ decreases. If we then increase the height of the isosceles triangle keeping the base $AB$ fixed until the circumcircle reaches radius $\eps$ the angle $\gamma$ decreases again. Hence, the comparison triangle with the minimal angle $\varphi_{\eps}$ as on the left hand side in Figure~\ref{Fig:InscribedTriangles}.

To compute $\varphi_{\eps}$ we begin with the angle $\psi_{\eps}$ at $O$ of the equilateral triangle $OAB$. By {\bf formula \cite[2.2.2(iii)]{BuserBook}}we have \, $\sinh \frac{\eps}{2} = \sin \frac{\psi_{\eps}}{2} \cdot \sinh{\eps}$ or equivalently, using the half angle formulas for $\cot$ and $\sinh$,
 \begin{equation}\label{eq:PsiEps}%
\cot \frac{\psi_{\eps}}{4}= \sqrt{1+2\cosh \eps} + \sqrt{2 + 2\cosh\eps}.
 \end{equation}%
In the right-angled triangle $OMC$ the angles at $O$ and $C$ are $\frac{1}{2}\pi - \frac{1}{4}\psi_{\eps}$ and $\frac{1}{2}\varphi_{\eps}$, respectively. By the formula used earlier,
 \begin{equation*}%
\cosh \eps = \cot \tfrac{\varphi_{\eps}}{2} \cdot \cot(\tfrac{\pi}{2}-\tfrac{\psi_{\eps}}{4})=
\cot \tfrac{\varphi_{\eps}}{2} \cdot \tan \tfrac{\psi_{\eps}}{4},
 \end{equation*}%
which implies the formula of the lemma.
\end{proof}
For completeness we add an upper bound on the angles of an inscribed triangle. The argument for this is easy: the largest angle is reached for the triangle $ABD$ inscribed in a circle of radius $\eps$ with sides $AB$ and $BD$ of length $\eps$. Triangles $OAB$ and $OBD$ are equilateral with the interior angles $\psi_{\eps}$ and so we have
 \begin{lemma}\label{lem:MaximalAngle}%
Consider, for given $\eps > 0$, a hyperbolic triangle with sides of lengths $\geq \eps$ inscribed in a circle of radius $\leq \eps$. Then all angles are bounded from above by $2\psi_{\eps}$, where $\psi_{\eps}$ is as in~\eqref{eq:PsiEps}.
\end{lemma}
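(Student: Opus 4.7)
The plan is to identify the extremal configuration attaining the bound $2\psi_\varepsilon$ and to reduce to it by three monotone deformations, each of which strictly increases the angle at a chosen vertex. Let $\gamma$ denote the largest angle of the triangle $ABC$ and let $V$ be the vertex where it is attained, the other two vertices being $P$ and $Q$.

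First, apply Lemma~\ref{lem:Isosceles} with $V$ as the moving vertex on the circumscribed circle (of radius $\rho\le\varepsilon$): the angle at $V$ is minimized in the symmetric position $|VP|=|VQ|$ and strictly increases as $V$ slides off that position toward either $P$ or $Q$. The admissibility boundary is met first when one of the two adjacent sides drops to its lower bound $\varepsilon$, so, relabelling if necessary, we may assume $|VQ|=\varepsilon$.

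Next, keep $V$ and $Q$ fixed and slide $P$ along the circle. I would plug the constraint $\sinh\rho\,\sin t=\sinh(\varepsilon/2)$ (encoding $|VQ|=\varepsilon$) into the formula
$$\cot\gamma=\cot\frac{\zeta}{2}\cosh\rho-\frac{\sinh^2\rho}{2\sin\frac{\zeta}{2}\cosh\rho}\Bigl\{\cos\bigl(2t+\frac{\zeta}{2}\bigr)+\cos\frac{\zeta}{2}\Bigr\}$$
derived in the proof of Lemma~\ref{lem:Isosceles}, and verify by direct differentiation that $\gamma$ increases monotonically in $\zeta$ on the admissible range. Hence the maximum is attained at $|VP|=\varepsilon$ too, leaving an isosceles triangle with $|VP|=|VQ|=\varepsilon$.

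Then, with $|VP|=|VQ|=\varepsilon$ fixed, let the circumradius $\rho$ grow to its upper bound $\varepsilon$. Setting $s=t$, $\zeta=2\pi-4t$, $a=\sinh(\varepsilon/2)$, $r=\sinh\rho$ in the same formula simplifies it to
$$\cot\gamma=\frac{2a^2-r^2(1-a^2)}{2a\cosh\rho\sqrt{r^2-a^2}},$$
whose numerator decreases and whose denominator increases as $r$ grows through $[a,\sinh\varepsilon]$; hence $\gamma$ is maximized at $\rho=\varepsilon$. In this final configuration, the central triangles $OVP$ and $OVQ$ are both equilateral with common side $\varepsilon$, so each of their angles equals $\psi_\varepsilon$ by~\eqref{eq:PsiEps}; since $P$ and $Q$ lie on opposite sides of the ray $VO$, the angles $\angle OVP$ and $\angle OVQ$ add at $V$ to give $\gamma=2\psi_\varepsilon$. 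The main obstacle is the monotonicity check in the second step, where $\zeta$ and $t$ are coupled by the side constraint; it reduces, after substitution, to checking the sign of a single rational expression in $\zeta$.
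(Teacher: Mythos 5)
Your argument is correct and arrives at the same extremal configuration as the paper --- the isosceles triangle with two sides of length $\eps$ inscribed in the circle of radius exactly $\eps$, whose two central triangles are equilateral with angle $\psi_{\eps}$, giving $2\psi_{\eps}$; the paper merely asserts that this configuration is extremal, so your three monotone deformations are a legitimate filling-in of that assertion. Two small points: the coupling you worry about in step 2 disappears if you use the decomposition $\gamma=\alpha+\beta$ with $\cot\alpha=\cosh\rho\,\tan s$ and $\cot\beta=\cosh\rho\,\tan t$ from the proof of Lemma~\ref{lem:Isosceles}, which lets you shrink the two adjacent sides to $\eps$ independently (and makes the third-side constraint, which your intermediate configurations may violate, visibly irrelevant to an upper bound); and in step 3 your sign argument for the numerator $2a^2-r^2(1-a^2)$ needs $\sinh(\eps/2)<1$, whereas the monotonicity in fact holds for every $\eps$ because $\cot^2\alpha=\sinh^2\!\big(\tfrac{\eps}{2}\big)\,\frac{1+\sinh^2\rho}{\sinh^2\rho-\sinh^2(\eps/2)}$ is decreasing in $\rho$.
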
%

{\sc Appendix B:} {\it A combinatorial lemma}
\setcounter{section}{2}
\setcounter{theorem}{0}

To distribute up to $L$ identical objects into $K$ distinct boxes is possible in $\binom{L+K}{L}$ different ways; a simple bound is $\frac{(L+K)^K}{K!}$. Here we prove a bound for the case for distributions in packets.

 \begin{lemma}\label{lem:comblem}%
For any string of positive integers $\vec{s} = (s_1, \dots, s_K) \in (\N \setminus \{0\})^K$ and any $L \in \N\setminus \{0\}$ we denote by $\chi_{\vec{s}}(L)$ the number of ordered sequences $\CG{(n_1, \dots n_K)} \in \N^K$ satisfying
 \begin{equation*}%
n_1 s_1 + \dots + n_K s_K \leq L.
 \end{equation*}%
Then
 \begin{equation*}%
\chi_{\vec{s}}{\,} \leq \frac{1}{K! \, s_1 \cdots s_K}(L+s_1+\dots+s_K)^K.
 \end{equation*}%
 \end{lemma}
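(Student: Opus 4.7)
The plan is to prove the bound by comparing the discrete count $\chi_{\vec s}(L)$ to the volume of a $K$-dimensional simplex via a standard box-packing argument, with boxes rescaled anisotropically according to the weights $s_i$.

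To each lattice point $(n_1,\dots,n_K)\in\N^K$ satisfying $n_1 s_1+\dots+n_K s_K\leq L$ I would associate the axis-aligned box
\[
Q(n_1,\dots,n_K) \;=\; [n_1 s_1,(n_1+1)s_1]\times\dots\times[n_K s_K,(n_K+1)s_K].
\]
Each such box has volume $s_1\cdots s_K$, and boxes attached to distinct lattice points have pairwise disjoint interiors since they differ in at least one coordinate in which the defining intervals share at most an endpoint. Next I would check that every such box is contained in the simplex
\[
\Sigma \;=\; \bigl\{(x_1,\dots,x_K)\in\R_{\geq 0}^K : x_1+\dots+x_K \leq L+s_1+\dots+s_K\bigr\}.
\]
Indeed, on $Q(n_1,\dots,n_K)$ one has $x_i\leq (n_i+1)s_i = n_i s_i + s_i$, so summing gives $x_1+\dots+x_K \leq (n_1 s_1+\dots+n_K s_K) + (s_1+\dots+s_K) \leq L + s_1+\dots+s_K$.

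Combining these two observations, the total volume of the disjoint union of boxes is at most $\mathrm{vol}(\Sigma)$. Since the simplex $\Sigma$ has volume $(L+s_1+\dots+s_K)^K/K!$, I obtain
\[
\chi_{\vec s}(L)\cdot s_1\cdots s_K \;\leq\; \frac{(L+s_1+\dots+s_K)^K}{K!},
\]
which is exactly the stated inequality after dividing by $s_1\cdots s_K$.

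There is no real obstacle: the argument is a one-step volume comparison, and the only thing to keep straight is the extra shift by $s_1+\dots+s_K$ arising from translating each weighted lattice point to the corner of its box rather than its center. Everything else is routine bookkeeping.
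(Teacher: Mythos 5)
Your argument is correct: the boxes $Q(n_1,\dots,n_K)$ indeed have pairwise disjoint interiors, each has volume $s_1\cdots s_K$, each is contained in the simplex $\Sigma=\{x\in\R_{\geq0}^K:\ x_1+\dots+x_K\leq L+s_1+\dots+s_K\}$, and $\mathrm{vol}(\Sigma)=(L+s_1+\dots+s_K)^K/K!$, so the volume comparison gives exactly the stated bound. This is, however, a genuinely different route from the paper's proof, which proceeds by induction on $K$: there, one sums over the possible values of $n_{K+1}$, applies the induction hypothesis to each slice, and bounds the resulting sum $\sum_n (L+A-ns)^K$ by the integral $\int(L+A-ts)^K\,dt$. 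Your single-step packing argument is more transparent and avoids the induction entirely; in effect the paper's integral comparison is the one-variable shadow of your volume comparison, carried out one coordinate at a time. A further advantage of your approach is that it applies verbatim when the weights $s_i$ are positive reals and $L$ is a positive real, so it yields the paper's Corollary \ref{cor:comblem} directly, whereas the paper obtains that corollary from the integer case by a discretization and a limit $t\to0$. The only bookkeeping point, which you handled correctly, is the shift by $s_1+\dots+s_K$ coming from using the lattice point as the lower corner of its box.
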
%
 \begin{proof}%
By induction over $K$. For $K=1$ the inequality is clear. For the step from $K$ to $K+1$ we abbreviate $s_{K+1} = s$, $n_{K+1} = n$. The possible values for $n$ are $n = 0,1,2,\dots, \left[\frac{L}{s}\right]$ (largest integer $\leq \frac{L}{s}$). Now we observe that
 \begin{equation*}%
\chi_{(s_1, \dots, s_{K+1})} = \sum_{n=0}^{\left[\frac{L}{s}\right]} \chi_{\vec{s}}(L-n \, s).
 \end{equation*}%
By the induction hypothesis, for any $n = 0, \dots, \left[\frac{L}{s}\right]$ the number of sequences $n_1, \dots, n_K$ has the upper bound 
 \begin{equation*}%
\chi_{\vec{s}}(L- n\, s) \leq \frac{1}{K! \, s_1 \cdots s_K} (L-n\, s + A)^K,
 \end{equation*}%
where we have abbreviated $A= s_1 + \dots + s_K$. \bigskip
It remains to prove that
 \begin{equation*}%
\sum_{n=0}^{\left[\frac{L}{s}\right]} \frac{1}{K! s_1 \cdots s_K}(L-n\, s +A)^K \leq \frac{1}{(K+1)! \, s_1 \cdots s_K s}( L+ A +s)^{K+1}.
 \end{equation*}%
Now
 \begin{align*}%
\sum_{n=0}^{\left[\frac{L}{s}\right]}( L+ A - n\, s)^{K} &= \sum_{n=0}^{\left[\frac{L}{s}\right]} \int_{n-1}^n( L+ A - n\, s)^{K} dt 
\leq \sum_{n=0}^{\left[\frac{L}{s}\right]} \int_{n-1}^n( L+ A - t\, s)^{K} dt \\
&\leq
\int_{-1}^{L/s}(L+A-t \, s)^K dt
= \frac{1}{s(K+1)}\left( (L+A+s)^{K+1}-A^{K+1}\right)\\
&\leq \frac{1}{s(K+1)} (L+A+s)^{K+1}
 \end{align*}%
and the above inequality follows.
 \end{proof}%

There is also a real valued version of the lemma:

 \begin{corollary}\label{cor:comblem}%
For any string of positive real numbers $\vec{\rho} = (\rho_1, \dots, \rho_K) \in \R ^K$ and any $\lambda>0$ we denote by $\chi_{\vec{\rho}}(\lambda)$ the number of ordered sequences $(n_1, \dots n_K) \in \N^K$ satisfying
 \begin{equation*}%
n_1 \rho_1 + \dots + n_K \rho_K \leq \lambda.
 \end{equation*}%
Then
 \begin{equation*}%
\chi_{\vec{\rho}}{\,} \leq \frac{1}{K! \, \rho_1 \cdots \rho_K}(\lambda+\rho_1+\dots+\rho_K)^K.
 \end{equation*}%
 \end{corollary}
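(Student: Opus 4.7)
The plan is to deduce this real-valued corollary from the already-established integer version Lemma~\ref{lem:comblem} by a simple rational approximation and scaling argument. Since $\chi_{\vec\rho}(\lambda)$ is defined by an inequality on the integer lattice $\N^K$, it does not change if we replace the constraint by a slightly relaxed rational one; we can then clear denominators and apply the integer lemma.

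Concretely, fix $\vec\rho = (\rho_1,\dots,\rho_K) \in \R_{>0}^K$ and $\lambda > 0$. For each integer $q \geq 1$ large enough, set $s_i = \lfloor q\rho_i \rfloor$ and $L = \lfloor q\lambda \rfloor$. For $q$ sufficiently large each $s_i$ is a positive integer, so $\vec s \in (\N\setminus\{0\})^K$ and $L \in \N\setminus\{0\}$. Since $s_i/q \leq \rho_i$, any sequence $(n_1,\dots,n_K) \in \N^K$ counted by $\chi_{\vec\rho}(\lambda)$ satisfies
\begin{equation*}
n_1 s_1 + \dots + n_K s_K \leq q(n_1\rho_1 + \dots + n_K\rho_K) \leq q\lambda.
\end{equation*}
Because the left-hand side is an integer, this is equivalent to $n_1 s_1 + \dots + n_K s_K \leq \lfloor q\lambda\rfloor = L$. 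Hence $\chi_{\vec\rho}(\lambda) \leq \chi_{\vec s}(L)$, and Lemma~\ref{lem:comblem} yields
\begin{equation*}
\chi_{\vec\rho}(\lambda) \leq \frac{(L + s_1 + \dots + s_K)^K}{K!\, s_1 \cdots s_K}.
\end{equation*}

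Dividing numerator and denominator on the right by $q^K$ (which cancels since both numerator and denominator are degree $K$ in the $s_i$'s and $L$), we rewrite the bound as
\begin{equation*}
\chi_{\vec\rho}(\lambda) \leq \frac{\bigl(L/q + s_1/q + \dots + s_K/q\bigr)^K}{K!\,(s_1/q)\cdots(s_K/q)}.
\end{equation*}
Letting $q \to \infty$, one has $s_i/q \to \rho_i$ and $L/q \to \lambda$, so the right-hand side converges to $(\lambda + \rho_1 + \dots + \rho_K)^K / (K!\, \rho_1 \cdots \rho_K)$. Since the left-hand side is independent of $q$, the corollary follows.

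There is no real obstacle; the only point to keep in mind is the direction of the approximation. We must take $s_i/q \leq \rho_i$ (so that the $\vec\rho$-feasible set is contained in the $(\vec s/q)$-feasible set, giving an upper bound on $\chi_{\vec\rho}$), and correspondingly use $L = \lfloor q\lambda\rfloor$ rather than a rounding upward. Both choices are compatible because the integer lemma's bound is monotone and continuous in its arguments, so the scaled bound converges to exactly the target expression.
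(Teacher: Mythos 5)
Your proof is correct and follows essentially the same route as the paper: approximate $\rho_i$ from below by $s_i/q$ with $s_i=\lfloor q\rho_i\rfloor$, note the feasible set only grows, apply the integer lemma, and let $q\to\infty$ (the paper writes this with $t=1/q$ and $L=\lfloor\lambda/t\rfloor+1$). Your observation that integrality lets you take $L=\lfloor q\lambda\rfloor$ without the $+1$ is a harmless minor variant.
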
%

 \begin{proof}%
Take $t>0$ smaller than $\rho_1, \dots, \rho_K$ and set $L = \left[\frac{\lambda}{t}\right]+1$, $s_k = \left[ \frac{\rho_k}{t}\right]$, for $k = 1, \dots, K$. Any string satisfying $n_1 \rho_1 + \dots + n_K \rho_K \leq \lambda$ then also satisfies $n_1 s_1 + \dots + n_K s_K \leq L$. We may thus apply Lemma~\ref{lem:comblem} and the corollary is obtained in the limit as $t \to 0$.
 \end{proof}%

\addcontentsline{toc}{section}{References}

\def\cprime{$'$}

{\em Addresses:}

Institute of Mathematics, EPFL, Switzerland\\
{\em Email:} \href{mailto:peter.buser@epfl.ch}{peter.buser@epfl.ch}\\
Department of Mathematics, University of Luxembourg, Luxembourg\\
{\em Email:} \href{mailto:hugo.parlier@uni.lu}{hugo.parlier@uni.lu}\\

\end{document}